\newcommand{\norm}[1]{\left \| #1 \right \|}
\newcommand{\epsi}{\varepsilon}
\newcommand{\linf}[1]{\underset{#1\to\infty}{\underline{\lim}}}
\newcommand{\lsup}[1]{\underset{#1\to\infty}{\overline{\lim}}}
\renewcommand{\epsilon}{\varepsilon}
\renewcommand{\phi}{\varphi}
\newcommand{\rev}[1]{{\color{black} #1}} 
\title{Neural Fields and Noise-Induced Patterns in Neurons on Large Disordered Networks}
\author{%
  Daniele Avitabile%
  \thanks{%
    Amsterdam Centre for Dynamics and Computation,
    Vrije Universiteit Amsterdam,
    Department of Mathematics,
    Faculteit der Exacte Wetenschappen,
    De Boelelaan 1081a,
    1081 HV Amsterdam, The Netherlands.
  \protect
    MathNeuro Team,
    Inria branch of the University of Montpellier,
    860 rue Saint-Priest
    34095 Montpellier Cedex 5
    France.
  \protect
  (\email{d.avitabile@vu.nl}, \url{www.danieleavitabile.com}, \url{www.amsterdam-dynamics.nl}).
  }
  \and
  James MacLaurin
  \thanks{Department of Mathematical Sciences. 
    New Jersey Institute of Technology, \email{james.n.maclaurin@njit.edu}
  }
}
\begin{document}

\maketitle

\begin{abstract}
  We study pattern formation in a class of high-dimensional neural networks posed on
  random graphs and subject to spatio-temporal stochastic forcing. Under generic
  conditions on coupling and nodal dynamics, we prove that the network admits a
  rigorous mean-field limit, resembling a Wilson-Cowan neural field equation. The
  state variables of the limiting systems are the mean and variance of neuronal
  activity. We select networks whose mean-field equations are tractable and we
  perform a bifurcation analysis using as control parameter the diffusivity strength
  of the afferent white noise on each neuron. We find conditions for Turing-like
  bifurcations in a system where the cortex is modelled as a ring, and we produce
  numerical evidence of noise-induced spiral waves in models with a two-dimensional
  cortex. We provide numerical evidence that solutions of the finite-size network converge
  weakly to solutions of the mean-field model. Finally, we prove a Large Deviation
  Principle, which provides a means of assessing the likelihood of deviations from
  the mean-field equations induced by finite-size effects.
\end{abstract}

\section{Introduction}

A key aspect of network dynamics, and in particular in mathematical neuroscience,
is to understand how nodal (neuronal) dynamics, in combination with network (synaptic)
structure, shapes patterns in spatially-extended networks. In deterministic systems
with local interactions, a prominent mechanism for the onset of patterns from a
homogeneous, quiescent state is the one proposed by Turing in the context of
morphogenesis~\cite{Turing1952}. 

Turing's paradigm has been adopted and extended in numerous ways to explain pattern
formation in natural systems (see
\cite{krauseIntroductionRecentProgress2021,krauseModernPerspectivesNearequilibrium2021}
for recent perspectives on Turing's work and influence). 
Turing bifurcations are typically studied in reaction-diffusion systems with
local interactions, but instabilities with an identical bifurcation structure are
also found in cortical models, in which neurons are coupled non-locally and without
diffusion
\cite{ermentroutMathematicalFoundationsNeuroscience2010,bressloffWavesNeuralMedia2014,
coombesNeurodynamicsAppliedMathematics2023}. The mathematical treatment of these
Turing-like bifurcations requires minor modifications, in order to deal with
non-locality~\cite{ermentroutSpatiotemporalPatternFormation2014}.

In the present paper we investigate the onset of patterns in randomly-connected
neurons whose dynamics is also subject to noise. This research question is long
standing: 
two early influential papers by Othmer and Scriven
\cite{Othmer1971, Othmer1974} conjectured that large random networks could support
pattern-forming instabilities in a mechanism analogous to the Turing instabilities
for continuum PDEs. 
More recently, several groups worked theoretically, and experimentally on the impact
of white noise on pattern formation
\cite{van1994noise,zhou2001array,lindner2004effects,forgoston2018primer,carrillo2023noise,mckane2014stochastic,biancalani2017giant},
and it has been proposed that noise could explain why patterns are so prevalent in
nature, despite the fact that often they can only be proven to exist in confined, and
sometimes narrow, parametric regimes \cite{maini2012turing}.

It has been conjectured that noise (both white noise temporal
fluctuations, and disorder in the connectivity structure) is a key mechanism behind
oscillations and rhythms in the brain \cite{wang2010neurophysiological,
lindner2004effects}. On the front of stochastic dynamics, early work by Scheutzow
\cite{scheutzow1985noise} demonstrated that mean-field systems can demonstrated
oscillatory activity. More recently, Lu\c{c}on and Poquet have demonstrated that
white noise can excite oscillations in a high-dimensional all-to-all network of
coupled excitable neurons \cite{luccon2020emergence,luccon2024existence}. On the
front of random connections, recent work by Bramburger and Holzer
\cite{bramburger2023pattern} tackled Turing bifurcations in the Swift-Hohenburg
model on random graphs. Another recent work by Carrillo, Roux and Solem
\cite{carrillo2023noise,carrillo2023well} has proved the existence of noise-induced
bifurcations in neural field equations.

In the present paper we study noise-induced bifurcations and noise-induced
transitions in
spatially-extended networks of $n$ randomly-coupled, stochastically-forced neurons,
which admit a tractable $n \to \infty $ limit, the so-called \textit{mean-field limit}. The
mean-field limit, in our case, pertains to the average and variance of the local neuronal
activity, and resembles a \textit{neural field equation}.
Neural fields are heuristic models for the coarse-grained activity of
spatially-extended neuronal network, which have been extensively employed by
mathematical neuroscientists to predict a rich range of patterns, waves and other
coherent
structures~\cite{wilsonExcitatoryInhibitoryInteractions1972,amariHomogeneousNetsNeuronlike1975,ermentroutMathematicalFoundationsNeuroscience2010,
bressloffWavesNeuralMedia2014,coombesNeuralFields2014,coombesNeurodynamicsAppliedMathematics2023,cook2022neural}.

An important open question concerns the derivation of neural-field equations as mean-field
limits of microscopic neuronal models~\cite{bressloff2011spatiotemporal}. In this
respect, the classical mean-field theory developed by McKean
\cite{mckean1956elementary}, Sznitman \cite{sznitman1991topics}, and Tanaka
\cite{tanaka1984limit} can be adapted to determine the large population-density limit
of a network of neurons with a spatially-varying structure.  Touboul
\cite{touboul2014spatially,TOUBOUL20121223} has derived neural-field equations from
particle models, with a connectivity that is a function of the spatial positions of
the neurons. He found a range of interesting patterns and bifurcations, including
standing waves and hopf bifurcations.

In this line of work~\cite{Delattre2016,Lucon2020Quenched}, neurons are thought of as
interacting particles with heterogeneous coupling, and population-density evolution
equations of McKean-Vlasov type are found in the mean-field limit. While
such evolution equations can be derived in fairly general setups, their complexity
often hinders further analysis, hence addressing questions concerning pattern
formation, bifurcation, and noise-induced transitions remain often prohibitively
difficult.

Further, the limiting equations are not necessarily similar to the classical
Wilson-Cowan neural field equations \cite{wilsonExcitatoryInhibitoryInteractions1972}, owing to differential operators featuring in the
Fokker-Planck equation. From an intuitive standpoint, classical neural field equations are
heuristic and reduce the neural activity at a spatial position to a finite
set of variables; on the other hand, Fokker-Planck equations are rigorously derived,
but contain the probabilistic distribution of all possible states at a particular
spatial location.
\begin{figure}
  \centering
  \includegraphics{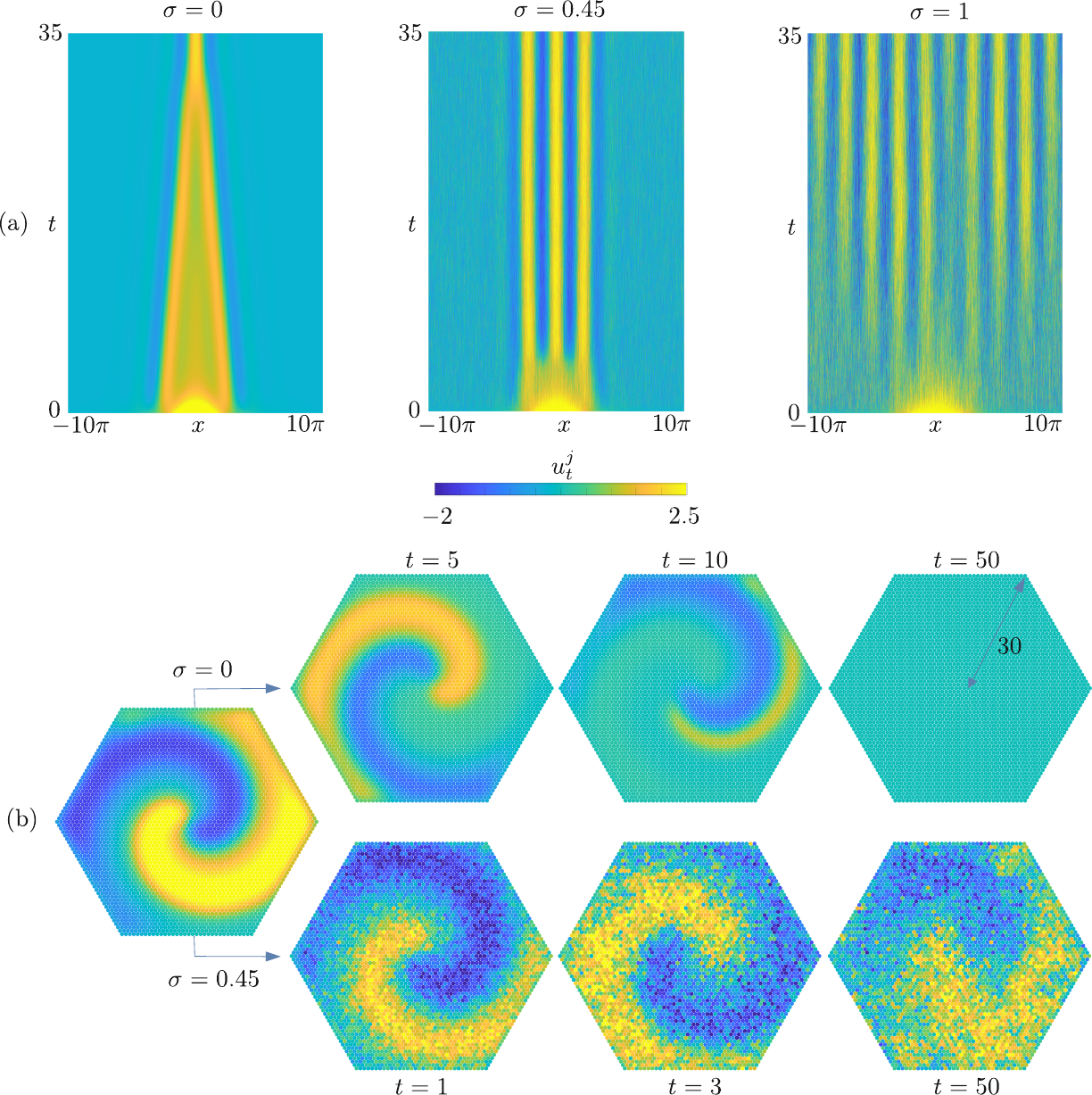}
  \caption{Noise-induced spatiotemporal patterns. (a) A rate model with $2^{13}$
    neurons, uniformly
    distributed on a ring of width $20\pi$ with all-to-all connections, displays a
    stationary localised equilibrium with 1 bump centred at $x=0$ in the
    absence of noise; when parameters and initial conditions are fixed, and noise is
    switched on, the model displays a different localised solution, with
    3 bumps at the core (noise intensity $\sigma = 0.45$), and a non-localised
    stationary state ($\sigma =1$). (b) A rate model with 3367 sparsely-coupled neurons
    distributed on a hexagon with diameter 60 supports a persistent rotating wave when
    noise is present ($\sigma = 0.45$,
    \href{https://figshare.com/articles/media/Animations_for_the_paper_Neural_fields_and_Noise-Induced_Patterns_in_Neurons_on_Large_Disordered_Networks_/26763037?file=48618040}{click here for an animation}), but the wave disappears when noise is absent
    (\href{https://figshare.com/articles/media/Animations_for_the_paper_Neural_fields_and_Noise-Induced_Patterns_in_Neurons_on_Large_Disordered_Networks_/26763037?file=48618037}{click for animation}).
    Parameters: for (a) we have used the model detailed in \cref{ssec:convergence}
    with $L =1$, $l =10 \pi$, $I(t,x) \equiv 0$, $G(x,t) \equiv \sigma$, $B = 0.4$, 
    $\alpha = 10$, $\theta = 0.4$, $m_0(x)= 5/\cosh(0.25 x)$; for (b) we use the model
    described in \cref{ssec:spiral} with $L_{11}=1 $, $L_{12}= 2$, $L_{21}= -0.44$,
    $L_{22}= 0.2 $ $I(t,x) \equiv 0$, $G(x,t) \equiv \sigma$, $\nu=1.5$, $\alpha = 10$,
    $\theta = 0.6$, with initial condition derived from a self-sustained spiral wave
    obtained in \cite[Tutorial 2, and accompanying data]{avitabile2024}.
  }
  \label{fig:patterns}
\end{figure}
Our aim is to determine limiting equations that are as tractable as classical neural
field equations, while being rigorously derived as the large $n$
limit from a system of interacting neurons. One strategy to obtain limiting
neural-field equations (both formally and rigorously) is to simplify the
microscopic dynamics of neurons, so as to describe neuronal spikes as instantaneous
events. Pattern formation in formal mean-field limits of integrate-and-fire spiking
neurons have been recently studied, revealing that stationary patterns in the mean
field may be a form of localised spatiotemporal
chaos~\cite{avitabileBumpAttractorsWaves2023}. Further, neural-field equations
have been found as rigorous limits of Hawkes Processes, in which the probability of a
neuronal spike at a particular time is the average of the activity of other neurons.
For this class of model, one can indeed achieve a limiting equation that resembles
the classical Wilson-Cowan neural field equation, as long as the memory kernel is
exponential~\cite{chevallier2019mean,agathe2022multivariate}. The state variable
in the neural fields is the local spiking intensity function.

No large brain network is `all-to-all'. In fact typically most brain networks are
sparse, insofar as the number of neurons connected to any particular neuron is much
smaller than the total number of neurons in the network. We thus consider a general
setup where the connectivity graph is inhomogeneous, and we make certain assumptions
on the average level of connectivity over the network. In the last section, we prove
that these assumptions are satisfied if the connections were to be sampled
independently from a distribution (with the probability of a connection depending on
the spatial location of the two neurons). Indeed there has been considerable effort
in recent years towards understanding dynamical systems on high dimensional random
graphs
\cite{oliveira2019interacting,delattre2016note,delgadino2021diffusive,bick2024dynamical,braunsteins2023sample,Remco2024}.

We work with McKean-Vlasov limits of spatially-extended neural networks of $n$
neurons of the form
\begin{equation}\label{eq:particleIntro}
   du^{j}_t = \Bigl(-L u^j_t + \frac{1}{n\phi_n} \sum_{k=1}^{n} K^{jk} f(u^k_t) + I^j_t\Bigr) dt
    + G^j_t dW^j_t, \quad j = 1,\ldots,n, \quad t \in \RSet_{\geq 0},
\end{equation}
driven by independent $\mathbb{R}^q$-valued Brownian Motions $\lbrace W^j_t \rbrace_{j}$.
Our choice for microscopic dynamics is thus a rate model: the
variable $u^j_t$ is a vector containing state variables of neuron at position $x^j_n$ and
time $t$; in addition to membranal voltage, the vector may contain other neural
variables such as synaptic or recovery variables; in addition, the form
presented above is also a compact way of describing the dynamics of multiple co-located
neuronal populations, whose variables at positions $j$ are stacked into the vector
$u^j_t$. Neurons are coupled through their firing rates $f$ via the
excitatory-inhibitory, and potentially sparse synaptic connections $K^{jk}$, with
strengths that scale with $n$ via the factor $n \phi_n$, whose asymptotics as $n
\to \infty $ will be discussed later. The network is subject to a
deterministic external input, as well as additive noise, both of which have a
spatio-temporal profile. In addition to stochastic forcing, the weights $K$ are
randomly distributed.

Crucially, we have kept the local neuronal dynamics linear, via the matrix $L$. Our
mathematical derivation of the mean-field limit can be modified to account for
nonlinear local interactions, but we refrain from this here: we rather leverage this
choice to determine a limiting mean-field equation that resembles the Wilson-Cowan
neural field equation. 

In \cref{fig:patterns} we show two examples of noise-induced patterns in networks of
type \cref{eq:particleIntro}. In the first one neurons are distributed on a ring of
width $20\pi$ and subject to a constant noise with intensity $G^j_t \equiv \sigma$.
In the absence of noise, the network produces a localised stationary state with one
bump at the core; upon increasing $\sigma$, while keeping all other elements of the
model untouched, the system displays a localised state with 3 bumps at the core,
or a nonlocalised state (see \cref{fig:patterns}(a)). The latter are not steady
states in the particle system \cref{eq:particleIntro}, because of the presence of
stochastic forcing. Further, if we fix a spatial location $j$ and time $t$, and send
$n \to \infty$, the neuronal coordinates $u^j_t$ in any $\epsilon$-ball $B_{\epsilon}(j)$ about $x^j_n$ \textit{do not converge} towards a
limit: there is indeed no pointwise convergence of the spatial profiles, and indeed the
higher $n$, the wider the fluctuations exhibited by the random variable $u^j_t$ (that is,  $\lim_{n\to\infty}\sup \big\lbrace \| u^j_t \| : x^j_n \in B_{\epsilon}(j) \big\rbrace = \infty$).

The expectation and variance of $u^j_t$, however, converge to a limit as $n \to
\infty$, and the evolution equation for their limits resemble Wilson-Cowan neural
field equations, in which the noise intensity $\sigma$ enters as a control parameter.
The transitions seen in \cref{fig:patterns}(a) can thus be explained in the mean
field limit by means of standard bifurcation theory, and numerical bifurcation
analysis. 

In \cref{fig:patterns}(b), we present an example in which neurons are distributed on
a hexagon (note that the cortex here is a discrete lattice with around 3400 neurons,
as visible from the image pixels and the
\href{https://figshare.com/articles/media/Animations_for_the_paper_Neural_fields_and_Noise-Induced_Patterns_in_Neurons_on_Large_Disordered_Networks_/26763037/1}{accompanying
animations}). The vector $u^j_t$ features a recovery variable,
and the setup of this numerical experiment is inspired by a seminal paper by Huang
et al.~\cite{huangSpiralWavesDisinhibited2004a}, in which spiral waves were observed
in neocortical slices, and simulated with a neural field with a recovery variable.
In \cref{fig:patterns}(b) we propose a simulation of a system of type
\cref{eq:particleIntro}: sufficiently large noise levels are necessary to support a
rotating wave in the system, which decays towards the inactive state in the noiseless
case.

The main results of the paper can be summarised as follows:
\begin{enumerate}
  \item We provide conditions on \cref{eq:particleIntro} which guarantee that the
    empirical measure associated to its particles converges as $n \to \infty $ to a
    unique measure. In addition to additive noise with spatio-temporal varying
    intensity, we work with quenched values of the random connections. 
  \item We prove that the marginal of the asymptotic measure is Gaussian at any
    spatial point, and any time. Further, we find that mean and variance of the
    law, which describe uniquely the marginal, evolve according to a system of
    neural-field equations.
  \item Using a concrete example in which the mean field can be easily computed, we
    illustrate how noise-induced Turing-like bifurcations arise in the mean-field
    model: we use the noise intensity as the bifurcation parameter and show how a stable
    homogeneous state becomes unstable to periodic perturbations of predictable
    wavelength as the noise intensity increases.
  \item We define an appropriate mode of convergence (weak convergence) for the
    random variables $u^j_t$ to the mean-field variables, and provide numerical
    evidence that this mode of convergence is attained in numerical simulations.
  \item We prove a Large Deviation Principle for the system.  
\end{enumerate}

The paper is structured as follows: we present notation in \cref{sec:notation}, and
introduce the particle model, our standing assumptions, and our main results in
\cref{Section Particle Model}; \cref{sec:Turing} presents Turing-like bifurcations in
a ring model, while \cref{sec:numerics} discusses numerical experiments; we present
mathematical proofs in \cref{sec:proof}, and we conclude in \cref{Section
Conclusion}. Extra details for some of the proofs are provided in the supplementary
materials (\cref{sec:supplementary}). 

\section{Notation}\label{sec:notation}
For any Polish Space $X$, we let $\mathcal{P}(X)$ denote the set
of all probability measures over $X$. We always endow
$\mathcal{P}(X)$ with the topology of weak convergence: this is the unique
topology such that $\mu_i \to \nu$ if and only if for any $g \in BC(X,\RSet)$, the
set of bounded continuous functions on $X$ to $\RSet$, it holds
$\mathbb{E}^{\mu_i}[g] \to \mathbb{E}^{\nu}[g]$, where for $\rho \in \calP(X)$ we
define
\[
  \mean^\rho [g] = \int_{X} g(x) \,d\rho(x).
\]
For $u \in C([0,T],\mathbb{R}^q)$, define the supremum norm
\begin{equation}\label{eq:normT}
  \| u \|_T = \sup_{t \in [0,T]} \sup_{1\leq \alpha \leq q}| u_{\alpha,t} |.
\end{equation}
More generally, for fixed compact $J \subset \RSet$ and Banach space $X$, we denote by $C(J,X)$ the
space of continuous functions on $J$ to $X$ with norm 
\[
\| u \|_{C(J, X)} = \max_{t \in J} \| u(t) \|_{X}.
\]
 
For an integer $n \in \NSet$ we set $\NSet_n = \{ 1, \ldots,
n\}$, and we denote by $\RSet^{n \times n}$ the space of real-valued $n$-by-$n$
matrices, endowed with the Frobenius norm $\| \blank \|_F$.

\section{Particle Model} \label{Section Particle Model}

We introduce in this section a spatially-extended model of
interacting neurons, subject to stochastic forcing, which we refer to as the
\textit{particle model}. The model features $q$ populations of neurons, each
containing $n$ neurons, for a total of $nq$ neurons. A straightforward adaptation of
the model covers the case of population containing a variable number of neurons so
that population $\alpha$ contains $n_\alpha$ neurons, for a total of $\sum_{\alpha =
1}^q n_\alpha$ neurons. This choice would make notation heavier without modifying
substantially our proofs, and we don't present it here. In passing we note that,
without any modification, the $q$-population model presented below covers also the
case of a single population of $n$ neurons, each described by $q$ variables.

The state of the system is described by a stochastic
process $\{ u_t \colon t \in [0,T]\}$ with values
in $\RSet^{nq}$. It is convenient to expose the components of $u_t$, hence we set
some notation to that purpose. Without loss of generality, we assume components to be
ordered lexicographically and use the notation
\[
  u_{\alpha,t}^j := (u_t)_{(j-1)q +\alpha}
  \in \RSet , \qquad (j,\alpha,t) \in \NSet_n \times \NSet_q \times [0,T]
\]
to denote the state variable of neuron $j \in \NSet_n$
in population $\alpha \in \NSet_q$ (neuron $(j,\alpha)$) at time $t \in
\RSet_{ \geq 0}$. Alternatively, in single population models in which each neuron
configuration has multiple components, $u_{\alpha,t}^j$ represents the $\alpha$th component
of the state variable of neuron $j$ at time $t$. With a little abuse of
notation, we will use interchangeably $u^j_{\alpha,t}$ and $(u_t)^j_\alpha$. As we
will see below, neurons are spatially distributed, with position $x^j$,
and it is convenient to introduce a symbol for all state variables located at the
$j$th node 
\[
  u^j_t := \{ u^j_{\alpha,t} \}_{j = 1}^q \in \RSet^q \qquad (j,t) \in \RSet^n \times
  [0,T].
\]

The particle model is a system of $nq$ It\^o Stochastic Ordinary Differential Equations of
the form
\begin{equation}\label{eq:particleModel}
  \begin{aligned}
    & du^j_t = \Bigl(-L u^j_t + \frac{1}{n \phi_n} \sum_{k=1}^{n} K^{jk} f(u^j_t) + I^j_t\Bigr) dt + G^j_t dW^j_t, 
    && (j,t) \in \NSet_n \times [0,T], \\ 
    & u^j_0 = z^j,
    && j \in \NSet_n
  \end{aligned}
\end{equation}
in which $L$ and $\{ K^{jk} \}_{j,k}$ are $q$-by-$q$ matrices representing the local
and nonlocal interaction coupling, respectively; $\phi_n$ is a coefficient scaling with
$n$, the function $f \colon \RSet^q \to \RSet^q$ models neuronal firing rates, the
vectors $\{ I^j_t \}_{j} \subset \RSet^q$ model deterministic external inputs at time
$t$, the $q$-by-$q$ matrices $\{ G^j_t\}_{j}$ model the intensity of
the noise (the diffusion term) at time $t$, and $\{ z^j \}_{j} \subset \RSet^q$ are the initial
conditions. In the remainder of this section we shall present all elements of the
model, so as to set the SODE on precise ground, and present our working assumptions. 

\subsection{Geometry}
We begin by making assumptions on the geometry of the cortex. Neurons are spatially
distributed on a cortex $D$ for which we make the following standing assumption:
\begin{hypothesis}[Cortical domain]\label{hyp:domain}
  The cortex $D$ is a compact domain in
  $\RSet^d$, for $d \in \NSet$, and there exists a metric $d(\blank,\blank)$ which
  defines a topology on $D$.  
\end{hypothesis}

In applications, the choice $D = \SSet^1$ or $D = \SSet^2$ is sometimes convenient,
but we are not restricted to this choice in the present paper. Further,
neuron $(j,\alpha)$ occupies position $x^j_n \in D$ for any $\alpha \in
\NSet_q$. In passing, note that we will omit the dependence on $n$ for notational
simplicity. Since $x^j$ is independent of $\alpha$, the $q$ populations are
co-located; this choice is also appropriate for a model with a single neuronal
population of neurons described by $q$ variables.

Neurons are distributed deterministically in $D$, but we are concerned with the
limiting behaviour of the model when $n \to \infty$, for which we require the
following assumption.
\begin{hypothesis}[Spatial distribution of neurons] \label{hyp:spatialDistributioninitial}
  The empirical measure
  \[
    \hat \mu^n = \frac{1}{n} \sum_{j \in \NSet_n} \delta_{x^j_n} 
    \in \calP(D)
  \]
  satisfies $\hat \mu^n \to \ell$ as $n \to \infty $ weakly in $\calP(D)$, where $\ell$ is
  the uniform measure on $D$.
\end{hypothesis}
\begin{remark}
  Although the distribution of points is deterministic, the empirical measure $\hat \mu^n(x)$ and its limit $\ell$ are still well-defined probability measures. This
  implies $\ell(D) = 1$ which differs from the Lebesgue measure of $D$ as one may
  expect initially. An appropriate scaling that accounts for the Lebesgue measure of
  $D$ will be introduced in due course.
\end{remark}

\subsection{Local and Nonlocal connectivity}\label{Section Connectivity}  The particle system
\cref{eq:particleModel} features both local and global neuronal interactions. The
local interaction is given by the \textit{deterministic, real-valued
matrix} $L \in \RSet^{q \times q}$ 
hence a local (possibly self-) coupling is in place between neuron $(j,\alpha)$ and
$(j,\beta)$. Note that, for simplicity, we have taken the matrix $L$ to be spatially
homogeneous, hence independent of $j$.

\begin{remark} It is possible to derive mean-field limits for particle systems
  with nonlinear local dynamics \cite{sznitman1991topics}. In the present paper we aim to
  illustrate the phenomenon of noise-induced Turing-like patterns using a relatively
  simple setup, which favours mathematical tractability of the mean field over
  generality. In this spirit, we restrict our attention to linear local
  cross-population coupling between neuron $(j,\alpha)$ and $(k,\beta)$. We note that
  nonlinear effects are still present, through the mean-field coupling term discussed
  below.
\end{remark}

The connection from neuron $(k,\beta)$ to $(j,\alpha)$ 
is modelled via matrices $\{ K^{jk} \}_{jk}$ 
acting on vectors $v \in \RSet^q$ as
\[
(K^{jk}v)_\alpha = \sum_{\beta \in \NSet_q} 
      K^{jk}_{\alpha\beta} v_{\beta},
      \qquad (j,k) \in \NSet_n.
\]
The connectivity may be sparse or dense. The level of sparseness is indicated by a
parameter $\phi_n$: this is such that the typical number of edges afferent on a
typical node is $n\phi_n$. The synaptic connectivity can be excitatory and
inhibitory, symmetric or asymmetric, and need not be distance dependent. We require
the following hypothesis to hold, which is similar to a Graphon Assumption employed in
many papers on high-dimensional deterministic ODEs on sparse networks
\cite{bramburger2023pattern}.
\begin{hypothesis}[Synaptic connections] \label{hyp:connections}
  The synaptic connection strength from neuron
  $(k,\beta)$ to $(j,\alpha)$ is given by $(n\phi_n)^{-1} K^{jk}_{\alpha\beta}$, with
  scaling factor $\phi_n > 0$, and coefficient $K_{\alpha\beta}^{jk} \in \mathbb{R} $. 
 \begin{enumerate}
\item There exists a continuous function 
  $\mathcal{K}_{\alpha\beta}  \colon D \times D \to \RSet_{\geq 0}$ such
  that  
 \begin{align*}
\lim_{n\to\infty} R^n = 0 
\end{align*}
where
\begin{align*}
&R^n = n^{-1}\sup_{\alpha \in \mathbb{N}_q}\sup_{y\in [-1,1]^{qn} }\sum_{j \in \mathbb{N}_n , \alpha \in \mathbb{N}_q} &&\bigg(  n^{-1}\sum_{k\in \mathbb{N}_n , \beta \in \mathbb{N}_q}\bigg( \phi_n^{-1}K^{jk}_{\alpha\beta} \\ & &&  -   \mathcal{K}_{\alpha\beta}(x^j_n , x^k_n) \bigg) y^k_{\beta} \bigg)^2.
\end{align*}
\item There exists a constant $\mathfrak{c} > 0$ such that $|K^{jk}_{\alpha\beta}| \leq \mathfrak{c}$ always.
    \item The following limit is finite 
    \begin{equation}\label{eq: absolute summability of connectivity sparseness}
\lsup{n}(n\phi_n)^{-1} \sup_{\alpha \in \mathbb{N}_q}\sup_{j\in \mathbb{N}_n}  \sum_{k\in \mathbb{N}_n}\sum_{\beta\in\mathbb{N}_q} \chi\lbrace K^{jk}_{\alpha\beta} \neq 0 \rbrace < \infty.
    \end{equation}
    \item The limit $\lim_{n\to\infty}\phi_n$ exists (and could be zero).
    \item It holds that
    \[
\lim_{n\to\infty} n\phi_n = \infty .    
    \]
  \end{enumerate}
\end{hypothesis} 
Intuitively, $\mathcal{K}_{\alpha\beta}(x,y)$ is a function representing the average
connectivity from $y$ to $x$, and \cref{hyp:connections} is similar to an
assumption on the convergence of a Graphon  employed in many papers on
high-dimensional deterministic ODEs on sparse networks \cite{bramburger2023pattern}. 
There are several ways to employ these assumptions. One option is to simply
take the connectivity to be such that 
\rev{
$K^{jk}_{\alpha\beta} =
\mathcal{K}_{\alpha\beta}(x^j_n,x^k_n) \phi_n$,
}
hence $\{ K^{jk} \}_{jk} \subset \RSet^{q
\times q}$ are deterministic matrices indicating connection strengths; in
this case we say the particle model is \textit{with kernel matrices}, as it uses
directly the kernel functions $\mathcal{K}_{\alpha\beta}$.

Another option, which we refer to as \textit{models with random ternary matrices}, is to use
random  ternary matrices $\{ K^{jk} \}_{jk} \subset
\{ -1, 0, 1 \}^{q \times q }$, whose values represent inhibitory, absent, and
excitatory connections, respectively. In this case one assumes $K^{ab}_{\alpha\beta}$
is probabilistically dependent on $K^{ab}_{\eta\zeta}$ only if both $a = j$ and $b =
  k$; further, one chooses continuous functions
  $p^+_{\alpha\beta},p^-_{\alpha\beta} \colon D \times D \to
[0,1] \subset \RSet$ such that $\mathcal{K}_{\alpha\beta}(x,y) =
p^+_{\alpha\beta}(x,y) - p^-_{\alpha\beta}(x,y)$ and 
\begin{equation}\label{eq:probKernel}
  \begin{aligned}
    \mathbb{P}\big( K^{jk}_{\alpha\beta} & = -1 \big) = \phi_n p^-_{\alpha\beta}(x^j_n,x^k_n), \\
    \mathbb{P}\big( K^{jk}_{\alpha\beta} & = 1 \big) = \phi_n p^+_{\alpha\beta}(x^j_n,x^k_n), \\
    \mathbb{P}\big( K^{jk}_{\alpha\beta} & = 0 \big) = 1 - 
      \varphi_n p^+_{\alpha\beta}(x^j_n,x^k_n) - \varphi_n p^-_{\alpha\beta}(x^j_n,x^k_n).
  \end{aligned}
\end{equation}
In this case the kernel functions are linked to the probability measures for the
connectivity (possibly via rescaling $\mathcal{K}_{\alpha\beta}$ by the constant
$\mathfrak{c}$ in \cref{hyp:connections}). This method of constructing the random
graph (by choosing the probability of a connection to be determined by the spatial
positions of the nodes) is variously referred to as a stochastic block model, sparse
Erdos-Renyi random graph, or $W$-random graph \cite{borgs2019} in the literature.

\begin{remark}
  In the Supplementary Material (\cref{sec:supplementary}) we prove that models with random ternary matrices satisfy
  \cref{hyp:connections}. Our numerical examples use by default models with kernel
  matrices, albeit \cref{sec:numerics} contains numerical results for both model
  types, and comparisons between each of them and the mean field.
\end{remark}

\subsection{Nonlinearities, external deterministic forcing, and diffusion term} \label{Section Nonlinearities}
In order to complete the description of the particle system \cref{eq:particleModel},
we discuss the nonlinearities and forcing in the system.

Each neuron is described through a rate equation, with firing rate encoded in the
nonlinearity $f \colon \RSet^q \times \RSet^q$, which is defined componentwise by
\[
  (f(v))_{\alpha} := f_{\alpha}(v_{\alpha}), \qquad \alpha \in \NSet_q,
\]
where $\{ f_{\alpha} \}_\alpha$ are standard firing rate functions (more precise
hypotheses will be given later).

The network is subject to an external deterministic forcing $I \colon [0,T] \times D \to
\RSet^{nq}$, which appears in the particle model \cref{eq:particleModel} via the
position
 \[
   (I^j_t)_{\alpha} := \bigl(I(t,x^j_n)\bigr)_\alpha, 
   \qquad 
   (j,\alpha,t) \in \NSet_n \times \NSet_q \times [0,T].
\]

Similarly, the noise intensity is modelled via a deterministic mapping $G \colon [0,T]
\times D \to \RSet^{q \times q}$, from which we extract matrices $\{ G^j_t \}_{j,t}
\subset \RSet^{q \times q}$ acting on vectors $v \in \RSet^q$ as
\[
  (G^j_t v)_{\alpha} = \sum_{\beta \in \NSet_q} (G(t,x^j_n))_{\alpha\beta} v_\beta,
  \qquad 
  (j,\alpha,t) \in \NSet_n \times \NSet_q \times [0,T].
\]
\subsection{Initial Conditions}

The initial conditions are taken to be non-random constants.   Throughout almost all of this paper it is assumed that the initial conditions $\lbrace z^j \rbrace_{j \in \mathbb{N}_n}$ are such that the following condition is satisfied. 
\begin{hypothesis}[Initial Conditions] \label{hyp:spatialDistribution}
  There is a measure $\kappa \in \calP(D \times \mathbb{R}^{q})$ such that the empirical measure
  \[
    \hat \mu^n = \frac{1}{n} \sum_{j \in \NSet_n} \delta_{x^j_n , z^j} 
    \in \calP(D \times \mathbb{R}^{q})
  \]
  satisfies $\hat \mu^n \to \kappa \in  \calP(D \times \mathbb{R}^{q})$ as $n \to
  \infty $. For $x\in D$, write $\kappa_x \in \mathcal{P}(\mathbb{R}^q)$ to be
  $\kappa$ conditioned on its first variable $x$. It is assumed that $\kappa_x$ is
  Gaussian, with mean $m_0(x) \in \mathbb{R}^q$ and covariance matrix $V_0(x) \in
  \mathbb{R}^{q\times q}$. Furthermore it is assumed that $x \mapsto m_0(x)$ and $x
  \mapsto V_0(x)$ are continuous.
\end{hypothesis}
In Corollary \ref{corollary stopping time} we explain how this general assumption allows us to determine the limiting dynamics for a very broad class of initial conditions (in particular, if the initial conditions are sample from the large $T$ equilibrium distribution).


Combining all terms we obtain a complete, component-wise version of the particle
model \cref{eq:particleModel}, 
\begin{equation}\label{eq:particleComponent}
  \begin{aligned}
    & du^j_{\alpha,t} = \bigg[ \sum_{\beta \in \NSet_q}  
      \bigg(
    \begin{aligned}[t]
    & 
     - L_{\alpha\beta} u^j_{\beta,t}  
    + \frac{1}{n \phi_n} \sum_{k \in \NSet_n} K^{jk}_{\alpha\beta} f_{\beta}(
    u^k_{\beta, t}) 
      \bigg)
    +I^j_{\alpha,t} \bigg] dt  \\
    & + \sum_{\beta \in \NSet_q} G^j_{\alpha\beta,t} dW^j_{\beta,t},
    \quad \qquad (j,\alpha,t) \in \NSet_n \times \NSet_q \times \RSet_{ \geq 0},
      \end{aligned}
      \\
    & u^j_{\alpha,0} = z^j_\alpha, \qquad (j,\alpha) \in \NSet_n \times \NSet_q
  \end{aligned}
\end{equation}
 
We complete this section by making some assumptions on the functional inputs
presented above
\begin{hypothesis}[Functional inputs to the particle system]\label{hyp:functions}
  It holds that:
  \begin{enumerate}
    \item The average synaptic connectivity $\mathcal{K}_{\alpha \beta}$ is in $C(D \times D, \RSet )$ for all $\alpha,
      \beta \in \RSet^q$. 
    \item The firing rate function $f: \RSet^{q} \to \RSet^{q}$ is uniformly
      Lipschitz.
      \item The firing rate function $f$ is uniformly upperbounded.
    \item The external input function $I$ is in $C([0,T],C(D,\RSet^{q}))$.
    \item The noise intensity function $G$ is in $C([0,T],C(D,\RSet^{q \times q}))$.
    \item The noise intensity also satisfies
 
    \begin{equation}
        \inf_{t\in [0,T] , x\in D}\big| \det(G(t,x))\big|  > 0.
    \end{equation}
    \item The processes $\{W^j_t \colon t \in [0,T] \}_{j=1}^n$ are independent
      $\RSet^{q}$-valued Brownian Motions.
  \end{enumerate}
\end{hypothesis}
 
Henceforth we will refer to \crefrange{hyp:domain}{hyp:functions} as our
\textit{standing assumptions} and we assume they hold throughout the rest of the
paper.

Since the drift term in \eqref{eq:particleComponent} is Lipschitz, it is well-known
that there exists a unique strong solution
\cite{karatzasBrownianMotionStochastic1998}. By a `strong solution', it is meant that
for any valid probability space containing the Brownian Motions, initial conditions
and random connections, there exists precisely one stochastic process $u^j_t$
satisfying the above conditions.

\subsection{Main results}\label{ssec:mainResults} 
We wish to study the empirical measure associated to a solution of the system in the
interval $J = [0,T]$. For our treatment it is convenient to augment the orbit $\{ u^j_t
\colon t \in [0,T]\}$, so as to include also the positions $\{ x^j \}_j$, which have
trivial time-independent dynamics. We thus introduce the Banach space $(S_T, \| \blank
\|_{S_T})$ for
this extended orbit of the system, by setting 
\[
  S_T = D \times C([0,T],\RSet^q),
  \qquad 
  \| (x,u) \|_{S_T} = \| x \|_{\RSet^d} +  \| u \|_T,
\]
with $\| \blank \|_T$ given in \cref{eq:normT}, and introduce the empirical
measure 
\[
  \hat \mu^n_T = \frac{1}{n}\sum_{j \in \NSet_n}
  \delta_{s^j}
  \qquad 
  s^j = \bigl(x^j, \{ u^j_t \colon t \in [0,T] \}\bigr).
\]
The empirical measure lives in the space
\begin{equation}\label{eq:YTSpace}
  Y_T = \{ \mu \in \calP(S_T) \colon \mean^{s \sim \mu}\big[  \| s \|_{S_T} \big] < \infty \},
\end{equation}
and the topology of $Y_T$ is metrized by the Wasserstein Distance, defined as follows
\[
  d_{Y_T}(\mu,\nu) = \inf_{ \xi \in \Gamma(\mu,\nu)} 
  \mean^{(r,s) \sim \xi} \big[  \| r-s \|_{S_T} \big],
\]
where $\Gamma(\mu,\nu) \subset \calP(S_T \times S_T)$ is the set of all couplings of
$\mu$ and $\nu$, that is, the subset of $\calP(S_T \times S_T)$ so that
the law of the random variable $r$ under $\xi$ is identical to $\mu$, and
the law of the random variable $s$ under $\xi$ is identical to $\nu$.

Our first results concern the almost sure convergence of the empirical measure
\begin{theorem}\label{Theorem Almost Sure Convergence}
  With probability one the empirical measure $\hat \mu^n_T$ converges
  as $n \to \infty$ to a unique measure $\mu_T \in Y_T$.
\end{theorem}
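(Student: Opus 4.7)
My approach is the classical McKean--Vlasov coupling strategy, adapted to the quenched-graph, spatially-extended setting. First I identify the candidate limit $\mu_T$ as the law of the pair $(x, \bar u(\cdot, x))$ with $x \sim \ell$, where $\bar u$ is the nonlinear process
\begin{equation*}
  d\bar u(t,x) = \Bigl[-L\bar u(t,x) + \int_D \mathcal{K}(x,y)\,\mE{f(\bar u(t,y))}\,\ell(dy) + I(t,x)\Bigr]dt + G(t,x)\, dW(t,x),
\end{equation*}
with $\bar u(0,x)\sim\kappa_x$ and driven by a family of independent Brownian motions indexed by $x$. The drift depends on $\bar u$ only through the deterministic function $\psi(t,x):=\mE{f(\bar u(t,x))}$, so the system closes by a Picard iteration for $\psi$ in $C([0,T],C(D,\RSet^q))$: boundedness and Lipschitz continuity of $f$, together with continuity of $\mathcal{K}$, $I$, $G$, $m_0$ and $V_0$ on their compact domains, give a contraction on short time intervals and a unique fixed point on $[0,T]$.

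Second, for each $n$ I couple the particle system to $n$ mutually independent copies $\{\bar u^j_t\}_{j\in\NSet_n}$ of the nonlinear process by assigning to $\bar u^j$ the same initial datum $z^j$, position $x^j_n$, and Brownian motion $W^j$ as $u^j$. The diffusion coefficients then agree, so the difference $\Delta^j_t := u^j_t-\bar u^j_t$ solves a purely drift-driven equation
\begin{equation*}
 \Delta^j_t = \int_0^t \bigl[-L\Delta^j_s + A^j_s + B^j_s + C^j_s + D^j_s\bigr]\,ds,
\end{equation*}
where I decompose the drift gap as
\begin{align*}
  A^j_s &= \tfrac{1}{n\phi_n}\textstyle\sum_k K^{jk}\bigl(f(u^k_s)-f(\bar u^k_s)\bigr), \\
  B^j_s &= \tfrac{1}{n\phi_n}\textstyle\sum_k K^{jk} f(\bar u^k_s) - \tfrac{1}{n}\textstyle\sum_k \mathcal{K}(x^j_n,x^k_n) f(\bar u^k_s), \\
  C^j_s &= \tfrac{1}{n}\textstyle\sum_k \mathcal{K}(x^j_n,x^k_n)\bigl(f(\bar u^k_s)-\mE{f(\bar u^k_s)}\bigr), \\
  D^j_s &= \tfrac{1}{n}\textstyle\sum_k \mathcal{K}(x^j_n,x^k_n)\mE{f(\bar u^k_s)} - \int_D \mathcal{K}(x^j_n,y)\mE{f(\bar u(s,y))}\ell(dy).
\end{align*}
Term $A$ supplies the Gronwall kernel via Lipschitz $f$ and the uniform bound on $(n\phi_n)^{-1}\sum_k |K^{jk}|$ provided by Hypothesis \ref{hyp:connections}(ii). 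Term $B$ is controlled uniformly in $j$ by splitting $K^{jk}$ into excitatory and inhibitory parts and invoking $R^n_\pm\to 0$ after rescaling the bounded vector $\{f(\bar u^k_s)\}_k$ into $[-1,1]^{qn}$. Term $D$ vanishes uniformly in $j$ by the weak convergence $\hat\mu^n\to\ell$ (Hypothesis \ref{hyp:spatialDistributioninitial}), combined with the joint continuity of $\mathcal{K}$ and of $(s,x)\mapsto \mE{f(\bar u(s,x))}$ on the compact set $[0,T]\times D$.

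The main obstacle is term $C$, which requires an almost-sure law of large numbers that is uniform in the ``test function'' $\mathcal{K}(x^j_n,\cdot)$. Since the processes $\{\bar u^j\}$ are independent (the $W^j$ are independent and the $z^j$ deterministic), for each fixed $x\in D$ and $s \le T$ the sum $n^{-1}\sum_k \mathcal{K}(x,x^k_n)(f(\bar u^k_s)-\mE{f(\bar u^k_s)})$ converges a.s.\ to zero by a bounded-variance SLLN. Uniform continuity of $\mathcal{K}$ on the compact product $D\times D$, together with an $\epsilon$-net argument in $x$ and a standard equicontinuity estimate in $s$ (using Hölder regularity of the diffusion paths and boundedness of $f$), upgrades this to $\sup_j \sup_{s\le T}|C^j_s|\to 0$ almost surely. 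Gronwall's lemma then yields $\max_j \|u^j-\bar u^j\|_T \to 0$ a.s. Finally, the same $\epsilon$-net plus non-i.i.d.\ SLLN applied to bounded continuous functionals on $S_T$ gives a.s.\ convergence in $Y_T$ of the coupled empirical measure $n^{-1}\sum_j\delta_{(x^j_n,\bar u^j)}$ to $\mu_T$, and the coupling bound
\begin{equation*}
  d_{Y_T}\!\Bigl(\hat\mu^n_T,\, \tfrac{1}{n}\textstyle\sum_j\delta_{(x^j_n,\bar u^j)}\Bigr) \;\le\; \max_j \|u^j-\bar u^j\|_T
\end{equation*}
transfers this to $\hat\mu^n_T$; uniqueness of the limit is immediate from the fixed-point characterisation of $\mu_T$.
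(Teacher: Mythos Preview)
Your coupling/Gronwall route is a genuinely different strategy from the paper's, and the overall architecture is sound; but one step is stated incorrectly and needs to be repaired.

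\textbf{The gap.} You claim that term $B^j_s$ is controlled \emph{uniformly in $j$} by invoking $R_n^\pm\to 0$. It is not: the quantities $R_n^\pm$ in Hypothesis~\ref{hyp:connections}(i) are $n^{-1}\sum_j(\cdot)^2$, so they give only the averaged bound
\[
  n^{-1}\sum_{j\in\NSet_n}|B^j_s|^2 \;\leq\; C\,\bigl(R_n^+ + R_n^-\bigr)\;\to\;0,
\]
with no control on $\sup_j|B^j_s|$. Your Gronwall argument, as written, closes on $\max_j|\Delta^j_t|$ and therefore needs exactly the sup bound you do not have. The fix is to run Gronwall on the $\ell^1$ average $n^{-1}\sum_j\|\Delta^j\|_t$ instead, and to use the natural Wasserstein coupling bound $d_{Y_T}(\hat\mu^n_T,\cdot)\leq n^{-1}\sum_j\|u^j-\bar u^j\|_T$ at the end. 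For this to close you must also split $A^j_s$: write $A^j_s = \tfrac{1}{n}\sum_k\mathcal{K}(x^j_n,x^k_n)(f(u^k_s)-f(\bar u^k_s)) + \tilde A^j_s$, where the first piece is bounded by $\|\mathcal{K}\|_\infty C_f\cdot n^{-1}\sum_k|\Delta^k_s|$ and feeds the Gronwall kernel, while the remainder $\tilde A^j_s$ has the same structure as $B^j_s$ (difference of $K$ and $\mathcal{K}$ acting on a bounded vector) and hence satisfies $n^{-1}\sum_j|\tilde A^j_s|\to 0$ by $R_n^\pm\to 0$. The row-sum bound in Hypothesis~\ref{hyp:connections}(ii), which you invoke for $A$, does not by itself close an $\ell^1$ Gronwall because it gives $|A^j_s|\lesssim\max_k|\Delta^k_s|$ rather than the average.

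\textbf{Comparison with the paper.} The paper does not use a synchronous coupling at all. It first uses Girsanov's theorem to show that the particle system and the system with averaged connectivity $\calK$ are exponentially equivalent (this is where $R_n^\pm\to 0$ enters, controlling the quadratic variation of the Radon--Nikodym exponent). It then obtains an LDP for the averaged system by writing its empirical measure as a continuous image $\Phi_T$ of the empirical measure of the driving noise and pushing forward Sanov's theorem via the contraction principle. Almost-sure convergence is finally read off from the LDP: the rate function $\mathcal{J}_T$ has a unique zero, so the upper bound plus Borel--Cantelli gives the result. Your approach is more elementary and self-contained (no LDP machinery, no Girsanov), and yields the limit directly; the paper's route is heavier but delivers the full Large Deviation Principle of Theorem~\ref{Theorem LDP Coupled System} along the way, from which Theorem~\ref{Theorem Almost Sure Convergence} is a corollary.
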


We have specifically chosen a linear local interaction to ensure that $\mu_T$ admits
a very tractable dynamics (i.e. one does not need to solve a PDE), and our next result concerns $\bar{\mu}_t \in \mathcal{P}\big( D
\times \mathbb{R}^q \big)$, which we define to be the marginal of the limiting law at
time $t$. This marginal is characterised in terms of a Gaussian distribution, whose
mean and covariance satisfy a tractable integro-differential equation, resembling a
neural field system.

\begin{lemma} \label{Lemma Limiting Equations}
  The marginal $\bar{\mu}_t$ can be written as the following probability law: for
  measurable subsets $A \subset D$, and $B \subseteq
  \mathbb{R}^q$,
\begin{equation}\label{eq:marginal}
  \bar{\mu}_t(A \times B) = 
  \int_{A} \int_{B} \rho_q\big( m(t,x) , V(t,x) ,u \big)\, du \,d\ell(x), 
\end{equation}
where $\ell$ is as in \cref{hyp:spatialDistributioninitial}, and where $\rho_q$, $m$, and
$V$ are defined below. The function $\rho_q \colon \RSet^q \times \RSet^{q \times q}
\times \RSet^q \to \RSet_{>0}$ is the Gaussian density for a distribution with mean $m$
and invertible covariance matrix $V$, 
\[
  \rho_q(m,V,u) = 
  \frac{1}{\sqrt{(2\pi)^q \det(V)}}
  \exp\biggl( - \frac{1}{2} (u-m)^T\, V^{-1}\, (u-m) \biggr).
\]
The mappings $m \colon [0,T] \times D \to \RSet^q$, and $V \colon [0,T] \times D \to
\RSet^{q \times q}$ appearing in \cref{eq:marginal} are mean and covariance matrices
of Gaussian variables, in the sense that for any $(t,x) \in [0,T] \times D$ there
exists an $\RSet^q$-valued Gaussian variable $u_t(x)$ such that
\[
  \begin{aligned}
    & m(t,x) := \mean[ u_t(x) ]\\
    & V(t,x) := 
    \mean\bigl[ \big( u_t(x) - m(t,x) \big) \big(u_t(x) - m(t,x)\big)^T \bigr].
  \end{aligned} 
\]

Further, $t \mapsto ( m(t,\blank), V(t,\blank) )$ is the unique solution in
$C^1([0,T],C(D,\RSet^q \times \RSet^{q \times q}))$ to the following initial-value
problem 
\begin{equation}\label{eq:meanField}
  \begin{aligned}
  & \partial_t m(t,x) = - L m(t,x) 
    + \int_{D} \calK(x,y) F\bigl(m(t,y), V(t,y)\bigr) \ell(d y) + I(t,x),
    \\
  & \partial_t V(t,x) = - L V(t,x) - V(t,x) L^T + G(t,x)G^T(t,x),
  \\
  & m(0,x) = m_0(x)\\
  & V(0,x) = V_0(x) 
  \end{aligned}
\end{equation}
on $(t,x) \in [0,T] \times D$ in which $F \colon \RSet^q \times \RSet^{q \times q}
\to \RSet^q$ given by
\begin{equation}\label{eq:FDef}
F(m,v) = \int_{\mathbb{R}^q} \rho_q(m , v,x) f(x) dx,
\end{equation}
and with $L$, $I$, $G$, $\calK$, $f$, $m_0$, and $V_0$ given in Sections \ref{Section Connectivity} and \ref{Section Nonlinearities}. 
\end{lemma}

\begin{remark}
We remark that the mean field \cref{eq:meanField} is exact, in the sense that it does
not involve an approximation or a truncation, and does not require expressions for
the cross-variance at difference spatial positions to `close' the dynamics. For a
standard treatment of ODEs for the mean and variance of linear SDEs, we refer to \cite[Section
5.6]{karatzasBrownianMotionStochastic1998}.
\end{remark}

Most papers concerning the large size limiting behavior of neurons with sparse
disordered connections assume that the initial conditions are sampled independently
from the law of the random connections. A major strength of the results in this paper
is that we do not require this assumption. See also a similar result due to Coppini
\cite{coppini2022long}. We underscore this in the following corollary.
\begin{corollary} \label{corollary stopping time}
Suppose that all of the previous assumptions hold, except that now the initial conditions $\lbrace u^j_{\alpha,0} \rbrace_{j\in \mathbb{N}_n, \alpha\in \mathbb{N}_q}$ are arbitrary constants. Let $\tau_n$ be any valid stopping time such that with unit probability,
\begin{align}
\lim_{n\to\infty} n^{-1}\sum_{j\in \mathbb{N}_n} \delta_{x^j_n , u^j_{\tau_n}} = \kappa.
\end{align}
Then if we define $z^j  := u^j_{\tau_n}$, the result in Theorem \ref{Theorem Almost Sure Convergence} holds (with the time defined relative to the stopping time, i.e. $t \to t-\tau_n  $).
\end{corollary}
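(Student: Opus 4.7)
My plan is to reduce the corollary to Theorem \ref{Theorem Almost Sure Convergence} by invoking the strong Markov property of the particle system \cref{eq:particleModel} at the stopping time $\tau_n$. Concretely, for each $n$ I would introduce the time-shifted processes $\tilde u^j_t := u^j_{\tau_n + t}$, shifted driving noises $\tilde W^j_t := W^j_{\tau_n + t} - W^j_{\tau_n}$, and analogously shifted forcing and diffusion $\tilde I^j_t := I^j_{\tau_n + t}$, $\tilde G^j_t := G^j_{\tau_n + t}$. By the strong Markov property, $\{\tilde W^j\}_{j \in \NSet_n}$ is a family of independent $\RSet^q$-valued Brownian motions independent of $\mathcal{F}_{\tau_n}$, and the $\tilde u^j_t$ satisfy \cref{eq:particleModel} on $[0,T]$, driven by $\tilde W^j$, with the same connections $K^{jk}$ and with initial data $\tilde z^j := u^j_{\tau_n}$.

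I would then observe that both $\tilde z^j$ and $K^{jk}$ are $\mathcal{F}_{\tau_n}$-measurable, and hence independent of $\tilde W^j$. By the assumption on $\tau_n$, there is an event $\Omega_0$ of full probability on which
\[
  n^{-1}\sum_{j\in \NSet_n}\delta_{x^j_n,\tilde z^j} \longrightarrow \kappa
  \quad \text{weakly in } \calP(D \times \RSet^q),
\]
which is precisely Hypothesis \ref{hyp:spatialDistribution} applied to the shifted system. The remaining standing assumptions concern either time-independent data (cortical geometry, sparsity scaling, regularity of $f$ and $\calK$) or continuous functions $I$ and $G$ that are preserved under the time shift, so they transfer verbatim to the shifted problem.

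The decisive step is to apply Theorem \ref{Theorem Almost Sure Convergence} conditionally on $\mathcal{F}_{\tau_n}$. Taking a regular conditional distribution for $\tilde W^j$ given $\mathcal{F}_{\tau_n}$, I would fix a realisation $\omega \in \Omega_0$, freeze $\tilde z^j(\omega)$ and $K^{jk}(\omega)$ as deterministic data satisfying the hypotheses of the theorem, and conclude that under the conditional law the shifted empirical measure $n^{-1}\sum_j \delta_{(x^j,\,\{\tilde u^j_t\}_{t\in[0,T]})}$ converges a.s.\ to the unique $\mu_T \in Y_T$ characterised by \cref{eq:meanField}. Integrating out the conditioning via Fubini then yields a.s.\ convergence under the original measure, which is the corollary's assertion after the relabeling $t \mapsto t-\tau_n$.

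The main technical obstacle is legitimising this pathwise use of the main theorem when the initial data are random but $\mathcal{F}_{\tau_n}$-measurable. The proof of Theorem \ref{Theorem Almost Sure Convergence} is stated for non-random $z^j$, but its argument should only use two features of the initial data: the a.s.\ convergence of the empirical measure of $(x^j,z^j)$ to $\kappa$, and the independence of the driving noise from $\{z^j\}$. Both properties persist in the shifted setup by the strong Markov property, so verifying that the tightness bound, the identification of limit points, and the uniqueness argument for \cref{eq:meanField} all depend on $\{z^j\}$ only through the empirical-measure condition is where the work will concentrate; it should, however, be mechanical once the strong Markov reduction above is written out carefully.
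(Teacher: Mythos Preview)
Your proposal is correct and aligned with the paper's own argument, which is a single sentence: ``To see why this corollary must be true, one only needs to verify that Hypothesis \ref{hyp:spatialDistribution} is indeed satisfied.'' You are supplying the details the paper leaves implicit --- the strong Markov restart, the independence of the shifted Brownian motions from $\mathcal{F}_{\tau_n}$, and the conditional application of Theorem \ref{Theorem Almost Sure Convergence} --- which is exactly what is needed to make that one-line reduction rigorous. One minor point: your claim that the shifted $\tilde I$ and $\tilde G$ ``transfer verbatim'' glosses over the fact that $\tau_n$ is random, so $\tilde I(t,\cdot)=I(\tau_n+t,\cdot)$ and $\tilde G(t,\cdot)=G(\tau_n+t,\cdot)$ are random unless $I,G$ are time-independent (as in all of the paper's examples); the paper does not address this either, so it is not a defect of your argument relative to theirs, but you may wish to note the restriction explicitly.
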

To see why this corollary must be true, one only needs to verify that Hypothesis \ref{hyp:spatialDistribution} is indeed satisfied. 

We refer the reader to \cite{karatzasBrownianMotionStochastic1998} for a definition of stopping times. Essentially, it is any random time that `cannot know the future': for example the following `first-hitting-time' is a valid stopping time, for some $\zeta \in \mathcal{P}\big( D \times \mathbb{R}^q\big)$ and $\epsilon_n > 0$,
\[
\tau_n = \inf \big\lbrace t\geq 0 \; : \; d_W\big( n^{-1}\sum_{j\in \mathbb{N}_n} \delta_{x^j_n, u^j_t} , \zeta \big)  \leq \epsilon_n  \big\rbrace .
\]
By contrast the following is not a valid stopping time, for any measurable $A \subseteq \mathbb{R}^{nq}$,
\[
\tilde{\tau}_n = \inf\big\lbrace t \geq 0 \; : \; u_{t+1} \in A \big\rbrace
\]
\subsection{Large Deviations Results}

The importance of understanding how noise can induce rare events in biological systems is increasingly recognized \cite{keener2011perturbation,bressloff2014path}. A key early study on this phenomenon was conducted by Newby, Keener and Bressloff \cite{newby2013breakdown,bressloff2014path,newby2014spontaneous}. They determined that it is possible that small amounts of noise can induce rare events in excitable systems (such as spontaneous production of an action potential \cite{keener2011perturbation,newby2014spontaneous}).

It is widely conjectured that noise-induced transitions between attractor states could be essential to the brain's proper functioning \cite{deco2017dynamics,breakspear2017dynamic}. Such transitions include UP / DOWN transitions
\cite{shu2003turning}, the wandering of bumps of
activity in the visual cortex \cite{maclaurin2020wandering}, and stochastic models of binocular rivalry \cite{moreno2007noise}. For this reason, we also prove a Large Deviations Principle: this gives an asymptotic characterization of the probability of a transition path. It is significant to notice that the Large Deviations rate function is the same as the rate function with averaged connections (defined in the course of the proof), In other words, even for rare events, rare fluctuations in the sparse structure of the graph have a negligible effect.
\begin{theorem}\label{Theorem LDP Coupled System}
There exists a lower-semicontinuous function $\mathcal{J}_T: Y_T \to
\mathbb{R}^+$ (specified further below in  \cref{eq: transformed rate function})
such that for any sets $\mathcal{A},\mathcal{O} \subset Y_T$, with
$\mathcal{A}$ closed and $\mathcal{O}$ open, 
\begin{align}
\lsup{n} n^{-1}\log \mathbb{P}\big( \hat{\mu}^n_T  \in \mathcal{A} \big)  &\leq - \inf_{\mu \in \mathcal{A}} \mathcal{J}_T(\mu) \\
\linf{n} n^{-1}\log \mathbb{P}\big( \hat{\mu}^n_T  \in \mathcal{O} \big)  &\geq - \inf_{\mu \in \mathcal{O}} \mathcal{J}_T(\mu) .
\end{align}
Also $\mathcal{J}_T$ has compact level sets.
\end{theorem}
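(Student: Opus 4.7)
My plan is the classical two-step reduction for mean-field large deviations: first establish an LDP for an auxiliary averaged-connectivity system, then show exponential equivalence with the original system. For the first step I introduce $\{\tilde u^j_t\}_j$ driven by the same Brownian motions and initial data as \cref{eq:particleModel}, but with the random weights $(n\phi_n)^{-1} K^{jk}$ replaced by their deterministic averages $n^{-1}\mathcal{K}(x^j_n, x^k_n)$; let $\tilde \mu^n_T \in Y_T$ denote its empirical measure. Lemma \ref{lemma LDP Coupled System} (anticipated by the theorem statement) will supply an LDP at speed $n$ for $\tilde\mu^n_T$ with the claimed rate function $\mathcal{J}_T$. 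Exponential equivalence of $\hat\mu^n_T$ and $\tilde\mu^n_T$, combined with the standard exponential-equivalence transfer theorem, then yields the LDP for $\hat\mu^n_T$ with the same rate function $\mathcal{J}_T$, which is the content of the theorem.

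For Lemma \ref{lemma LDP Coupled System}, the uniform invertibility of $G$ (Hypothesis \ref{hyp:functions}.6) permits a Girsanov change of measure that removes the mean-field drift term and reduces the averaged system to $n$ independent Ornstein--Uhlenbeck-type diffusions with drift $-Lu$, diffusion $G(t,x^j_n)$, Gaussian initial law $\kappa_{x^j_n}$, and mark $x^j_n$. Under this product reference law, $\tilde\mu^n_T$ is the empirical measure of independent but non-identically distributed random elements of $S_T$. Combining an extended Sanov theorem for non-identical marks (whose hypotheses follow from $\hat\mu^n|_D \to \ell$ in Hypothesis \ref{hyp:spatialDistributioninitial} together with continuity of $m_0, V_0$) with Varadhan's integral lemma applied to the quadratic Girsanov density yields an LDP for $\tilde\mu^n_T$ at speed $n$. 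The resulting rate function $\mathcal{J}_T(\mu)$ is the sum of an $x$-disintegrated relative entropy and a quadratic control cost measuring how far the drift of $\mu$'s conditional laws deviates from the McKean--Vlasov drift evaluated at $\mu$. Compact level sets and lower semicontinuity follow from the Dawson--G\"artner projective-limit framework together with compactness of $D$ and uniform boundedness of $f$.

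For exponential equivalence I couple the two systems on a common probability space with identical Brownians and initial data. The pathwise difference $e^j_t := u^j_t - \tilde u^j_t$ has no stochastic-integral component; adding and subtracting $n^{-1}\sum_k \mathcal{K}(x^j_n,x^k_n) f(u^k_s)$ decomposes its drift into a Lipschitz-in-$u$ piece (absorbed by Gronwall) and a pure graph-fluctuation piece
\begin{equation*}
\Delta^j_s \;=\; \sum_k \Bigl((n\phi_n)^{-1} K^{jk} - n^{-1}\mathcal{K}(x^j_n,x^k_n)\Bigr) f(u^k_s).
\end{equation*}
Boundedness of $f$ (Hypothesis \ref{hyp:functions}.3) allows me to rescale $f(u^k)/\|f\|_\infty$ into $[-1,1]^{qn}$ and plug it into the defining suprema of $R_n^{\pm}$, yielding an $\ell^2$-in-$j$ bound $n^{-1}\sum_j \|\Delta^j_s\|^2 \leq C (R_n^+ + R_n^-)$. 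Gronwall's inequality applied to $E_n(t) := n^{-1}\sum_j \|e^j_t\|^2$ then gives $E_n(T) \leq C (R_n^+ + R_n^-)$, which vanishes deterministically by Hypothesis \ref{hyp:connections}. The diagonal coupling yields $d_{Y_T}(\hat\mu^n_T, \tilde\mu^n_T) \leq n^{-1}\sum_j \|e^j\|_T \leq \sqrt{E_n(T)} \to 0$ almost surely at a deterministic (graph-determined) rate, so the exponential equivalence $n^{-1}\log\mathbb{P}(d_{Y_T}(\hat\mu^n_T,\tilde\mu^n_T) > \delta) \to -\infty$ holds trivially.

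The main obstacle is Lemma \ref{lemma LDP Coupled System}: the spatially varying diffusion $G(t,x)$ and the non-identical spatial marks $x^j_n$ mean that off-the-shelf McKean--Vlasov LDP results do not apply directly, and the explicit form of the rate function requires a careful marriage of the non-identical-marks Sanov theorem with the Varadhan calculation under the Girsanov density. A secondary but routine technical point is exponential tightness of $\hat\mu^n_T$ on $Y_T$, which follows from compactness of $D$, the linear-in-$u$ structure of the drift, boundedness of $G$ on the compact spatio-temporal domain, and standard exponential moment bounds for SDEs with Lipschitz drift and bounded diffusion.
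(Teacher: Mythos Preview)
Your overall architecture---establish an LDP for the averaged-connectivity system and then transfer it to the original system---matches the paper's. However, both of your substeps take genuinely different routes from the paper, and the comparison is instructive.

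For the transfer from the averaged system $\grave{\mu}^n_T$ to the original $\hat{\mu}^n_T$, you use a pathwise coupling and Gronwall argument, exploiting the crucial observation that the diffusion terms are identical in both systems so that $e^j_t = u^j_t - v^j_t$ satisfies a random ODE with no martingale part, and that the graph-fluctuation bound $R_n^+ + R_n^- \to 0$ from Hypothesis~\ref{hyp:connections} is \emph{deterministic} in the quenched setting. This yields $d_{Y_T}(\hat{\mu}^n_T,\grave{\mu}^n_T)\to 0$ at a non-random rate, so exponential equivalence is indeed automatic. The paper instead uses Girsanov's theorem to compute the Radon--Nikodym derivative $dQ^n/dP^n = \exp(n\Gamma_n)$ between the two laws and shows (Lemma~\ref{Lemma Gamma n bound}) that $|\Gamma_n|\to 0$ with super-exponential probability, using the time-changed Brownian representation of the martingale part. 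Your coupling argument is more elementary and, notably, does not require the non-degeneracy assumption $\inf|\det G|>0$ for this step.

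For the LDP of the averaged system itself, you sketch a Dawson--G\"artner style argument: Girsanov removes the mean-field drift, an extended Sanov theorem handles the non-identically distributed marks, and Varadhan's lemma re-inserts the interaction. The paper instead follows Tanaka: it applies Sanov directly to the empirical measure of the uncoupled Brownian increments $\tilde{\mu}^n_T$ (Theorem~\ref{Theorem Large Deviations of Uncoupled System}), constructs a continuous fixed-point map $\Phi_T:X_T\to Y_T$ such that $\grave{\mu}^n_T = \Phi_T(\tilde{\mu}^n_T)$ identically (Lemma~\ref{Lemma Phi T lifts}), and then invokes the Contraction Principle. The paper's route sidesteps the verification of Varadhan's uniform-integrability hypothesis and delivers lower semicontinuity and compact level sets of $\mathcal{J}_T$ for free from those of $\mathcal{I}_T$; your route requires these to be checked separately, as you acknowledge. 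Both approaches are standard and correct; the paper's is somewhat cleaner in bookkeeping, while yours keeps the rate-function computation closer to the explicit Dawson--G\"artner form stated in the Corollary following Theorem~\ref{Theorem LDP Coupled System}.
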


\section{Noise-induced Turing-like bifurcation}\label{sec:Turing}
We illustrate the noise-induced mechanism for pattern-formation in its simplest
form, by considering a one population network ($q=1$) on a ring of width $2l$, $D =
\RSet/2l\ZSet$ with distance dependent kernel $\calK(x,y) = A(x-y)$, where $A \in
C_p(2l,\RSet)$, the space of continuous real-valued $2l$-periodic functions. 
We consider a network with $L =1$, $I(t,x) \equiv 0$, and
time-independent, homogeneous noise intensity $G(t,x) \equiv \sigma$, for some
$\delta \geq 0$. 
To illustrate the Turing bifurcation numerically we will select $l=20\pi$, and we
  highlight that, upon rescaling space and parameters in the kernel $A$, the model can be
equivalently posed on a ring of width $\pi$ or $2\pi$, thereby modelling orientation
or direction preference, respectively \cite{ermentroutMathematicalFoundationsNeuroscience2010,
coombesNeurodynamicsAppliedMathematics2023}.

We show that a homogeneous steady state $(m_*,v_*)$ of \cref{eq:meanField} which is
linearly stable in the noiseless network ($\sigma =0$) becomes linearly unstable
to spatially-periodic perturbations for sufficiently large noise intensity $\sigma$.

Firstly, we note that the mean-field evolution for the covariance matrix $V$ in
\cref{eq:meanField} is decoupled from the mean evolution, and its dynamics is
uniformly contracting over $D$, as the following lemma states.
\begin{lemma}\label{lem:contraction}
Suppose that $G_t(x)$ is independent of $t$ and that there is a positive constant
$\gamma$ such that $\max \{ \real \lambda \colon \lambda \in \sigma(L)\} < -\gamma$,
where $\sigma(L)$ is the spectrum of the matrix $L$. Then there exists a unique $V_*(x) \in
C(D, \mathbb{R}^{Q\times Q})$ and constants $\beta, \epsi >0$ such that for all $V_0
\in C(D,\RSet^{q \times q})$ and all $x \in D $,
  \[
  \| V(t,x) - V_*(x) \|_{C(D,\RSet^{q  \times q})} 
  \leq \beta e^{-\epsi t}
  \| V_0 - V_*(x) \|_{C(D,\RSet^{q  \times q})},  
  \qquad 
  t \in \RSet_{\geq 0}.
\]
\end{lemma}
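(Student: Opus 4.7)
The $V$-equation in \cref{eq:meanField} is decoupled from $m$ and, for each fixed $x\in D$, is an autonomous affine matrix ODE because $G$ is independent of $t$. My plan has three steps: (i) construct $V_*(x)$ as the unique time-independent fixed point, (ii) verify its continuity in $x$, and (iii) propagate an exponential decay estimate to the $C(D,\RSet^{q\times q})$ norm.

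For step (i), any stationary $V_*(x)$ must satisfy the Lyapunov equation $LV_*(x)+V_*(x)L^T=G(x)G(x)^T$. The associated superoperator $\mathcal{A}\colon V\mapsto LV+VL^T$ on $\RSet^{q\times q}$ has spectrum $\{\lambda_i+\lambda_j:\lambda_i,\lambda_j\in\sigma(L)\}$, which under the hypothesis lies in $\{\real\mu<-2\gamma\}$. Hence $\mathcal{A}$ is invertible and the Lyapunov equation admits the unique explicit solution
\[
V_*(x)=\int_0^\infty e^{-Ls}\,G(x)G(x)^T\,e^{-L^T s}\,ds,
\]
the integral converging absolutely via the standard semigroup bound $\|e^{-Ls}\|\leq \beta_0 e^{-\gamma_0 s}$ for any $\gamma_0\in(0,\gamma)$ and some $\beta_0=\beta_0(L,\gamma_0)>0$. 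Continuity of $x\mapsto V_*(x)$ then follows by dominated convergence, using continuity of $G$ on the compact $D$ (item 5 of \cref{hyp:functions}).

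For step (iii), set $E(t,x)=V(t,x)-V_*(x)$. Subtracting the Lyapunov equation from the evolution in \cref{eq:meanField} eliminates the forcing, leaving the homogeneous equation $\partial_t E=-LE-EL^T$, whose solution is $E(t,x)=e^{-Lt}E(0,x)e^{-L^T t}$. Then $\|E(t,x)\|\leq \beta_0^2\,e^{-2\gamma_0 t}\,\|E(0,x)\|$ at every $x$, and taking the supremum over $D$ delivers the lemma with $\beta=\beta_0^2$ and $\epsi=2\gamma_0$. There is no substantial obstacle: since $L$ is spatially constant the semigroup bound is automatically uniform in $x$, so the argument reduces to classical Lyapunov-equation theory.
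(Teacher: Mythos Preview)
Your argument is essentially the paper's: both identify the spectrum of the Lyapunov operator $V\mapsto LV+VL^T$ as $\{\lambda_i+\lambda_j:\lambda_i,\lambda_j\in\sigma(L)\}$, use its invertibility to get a unique equilibrium, and then read off exponential decay of the homogeneous flow. The only difference is cosmetic: the paper vectorizes via $M=I_q\otimes L+L^T\otimes I_q$ and works on $\RSet^{q^2}$, whereas you stay at the matrix level with the explicit solution $E(t)=e^{-Lt}E(0)e^{-L^Tt}$ and supply the integral formula for $V_*$. Your continuity argument via dominated convergence is also slightly more explicit than the paper's.

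One caveat, shared with the paper's own proof: under the hypothesis as literally stated, $\sigma(L)\subset\{\real\lambda<-\gamma\}$, the semigroup $e^{-Lt}$ \emph{grows} exponentially, so your bound $\|e^{-Ls}\|\leq\beta_0 e^{-\gamma_0 s}$ (and likewise the paper's appeal to the decay estimate for $\dot v=-Mv$) does not follow. The intended hypothesis is evidently the opposite sign---$\sigma(L)$ in the right half-plane, bounded away from the imaginary axis---consistent with the concrete choice $L=1$ used throughout the surrounding section. With that sign corrected, your proof goes through verbatim.
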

\begin{proof} See \cref{sec:appendixProofContraction}.
\end{proof}
 
For the network under consideration, homogeneous steady states are identified with
$(m_*,v_*) \in \RSet^2$ satisfying the system
\begin{equation}\label{eq:HomSteadyState}
  0 = -m_* + F(m_*,\sigma^2/2) \int_{-l}^{l} A(x) \,d x, \qquad v_* = \sigma^2/2,
\end{equation}
from which we deduce that homogeneous steady states are parametrised by $\sigma$.
Assuming a homogeneous steady state $(m_*,v_*)$ exists, to assess linear its stability we must study
the asymptotic behaviour of solutions to \cref{eq:meanField} of the form
$\bigl(\tilde m(t,x), \tilde
v(t,x)\bigr) = \bigl(m_* + m(t,x), v_* + v(t,x) \bigr)$ for small $(m,v)$.

From \cref{lem:contraction}, however, we know that $v(t,x) \equiv v_* = \sigma^2/2$
is a stable equilibrium of the covariance equation, which is decoupled from the
dynamics of the mean $m$, hence perturbations to the initial conditions around $v_*$
decay exponentially fast. Therefore we focus on the long-term behaviour of
solutions to \cref{eq:meanField} of the form $\bigl(\tilde m(t,x), \tilde
v(t,x)\bigr) = \bigl(m_* + m(t,x), v_* \bigr)$ for small $m$: in this setup the
noise-induced Turing-like bifurcation is a Turing-like bifurcation 
of homogeneous steady states of \cref{eq:meanField}, in the parameter $\sigma$ for
the problem
\begin{equation}\label{eq:mEquation}
  \partial_t m(t,x) = - m(t,x) + \int_{-l}^l A(x-y) F\bigl(m(t,y), \sigma^2/2\bigr)d y.
\end{equation}

We will proceed formally. For a rigorous centre-manifold reduction of
neural field equations around several instabilities, including a Turing-like
bifurcation, we refer to \cite{avitabileLocalTheorySpatioTemporal2020}. We also refer to
\cite{carrillo2023noise} for a rigorous treatment of Turing-like bifurcations in
a nonlinear, nonlocal Fokker-Planck equation for neuronal dynamics.

Small perturbations $m(t,x)$ to $m_*$, under $v(x,t) \equiv \sigma^2/2$ are governed
by the linearised evolution equation
\begin{equation}\label{eq:linear}
    \partial_t  m(t,x) = - m(t,x) +  D_mF(m_*,\sigma^2/2) \int_{-l}^{l} A(x-y)
    m(t,y)\,d y, 
\end{equation}
for $(t,x) \in \RSet_{ \geq 0} \times \RSet\ 2l\ZSet$, in which we have denoted by
$D_mF$ the derivative of $F$ with respect to $m$, which is guaranteed to exist by
\cref{eq:FDef}.
Linear stability is determined by long-term behaviour of solutions to the
problem above, seen as an ODE on the Banach space $C_p(2l,\CSet)$. Expressing the
kernel $A \in C_p(2l,\RSet)$ as a Fourier series
\[
  A(x) = \sum_{k \in \ZSet} A_k \phi_k(x), \qquad A_k = \frac{1}{2l}\int_{-l}^{l}
  A(x) \overline{\phi}_k(x) \,d x, \qquad \phi_k(x) = e^{i k\pi x/l}, \qquad k \in
  \ZSet,
\]
and using the periodic convolution theorem, we find that
\cref{eq:linear} admits solutions
\[
  m_k(t,x) = \exp (\lambda_k t + i k \pi x/l),  
  \qquad 
  \lambda_k = -1 + D_m F(m_*, \sigma^2/2) 2l A_k
  \in \CSet,
  \qquad k \in \ZSet.
\]

A noise-induced Turing-like bifurcation occurs along on a branch of homogeneous
steady states $\{ m_*(\sigma) \colon \sigma \in I \subset \RSet\}$ if one of the
eigenvalues of $\lambda_k$ crosses the imaginary axis with nonzero speed, as
$\sigma$ varies. More precisely, the bifurcation occurs at $\sigma=\sigma_c$ if there
exist $(\sigma_c,k_c) \in I \times \ZSet$ such that the mapping
\begin{equation}\label{eq:gammaCurve}
  \gamma_{k} \colon I \to \RSet, \qquad  \sigma \mapsto \real \big[ -1 + D_m F(m_*(\sigma), \sigma^2/2) 2l A_k \big]
\end{equation}
satisfies the conditions $\gamma_{k_c}(\sigma_c) = 0$ and $\gamma'_{k_c}(\sigma_c)
\neq 0$.

For suitable choices of kernel and firing rate function it is possible that a homogeneous
equilibrium in the absence of noise, $m_*(0)$, is linearly stable while, upon
increasing $\sigma$, the equilibrium $m_*(\sigma_c)$ becomes linearly unstable to
perturbations with wavelength $k_c \pi/l$. 

\section{Numerical experiments}\label{sec:numerics}
We now present numerical simulations on the particle and mean-field system, and we
refer to \cite{avitabileMacLaurinCodes} for a public repository with our codes. 
The particle system is simulated using a standard Euler-Maruyama scheme with timestep $dt
= 0.01$. We discretise the mean field with the spectral collocation method proposed
in \cite{rankinContinuationLocalisedCoherent2014} which is spectrally
convergent~\cite{avitabileProjectionMethodsNeural2023}, and use Matlab's  in-built
\textsc{ODE45} for time stepping. We employed numerical bifurcation analysis tools
developed in \cite{rankinContinuationLocalisedCoherent2014,avitabileTutorial}. 


\subsection{Example of noise-induced Turing-like bifurcation} \label{ssec:turingExample} 

 \begin{figure}
   \centering
   \includegraphics{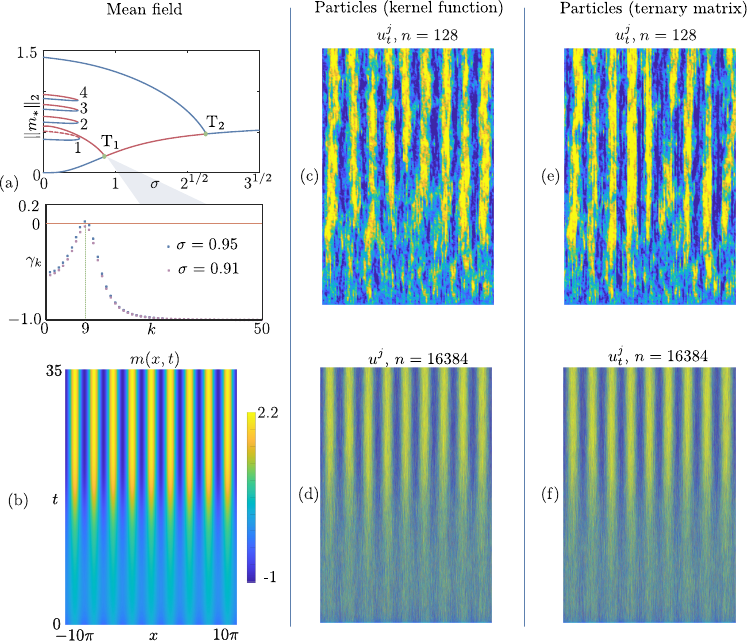}
   \caption{
     Noise-induced Turing-like bifurcation for the particle and mean-field
     model with one population ($q=1$), posed on a ring of width $2l$, with synaptic
     kernel \cref{eq:LTKernel}, neuronal firing rate \cref{eq:particleFiringRate}, and
     mean-field firing rate \cref{eq:firingRateMeanField}. (a) Bifurcation diagram of
     steaady states $m_*(x)$ to the mean-field system in the parameter $\sigma$. The
     homogeneous steady state undergoes Turing-like bifurcations at $T_1$ and $T_2$
     that generate one unstable (red) and one stable (blue) branch of
     spatially-periodic steady states, respectively. The mean field system also
     supports steady states that are spatially localised, with multiple bumps
     (branches with 1--4 bumps are reported here). In the inset, the values $\{ \gamma_k
     \}_k$ (defined as in \cref{eq:gammaCurve}), confirm that a Turing-like bifurcation
     of the homogeneous steady state is located between $\sigma = 0.95$ and $\sigma =
     0.91$, with critical wavenumber $k_c = 9$. (b) Numerical simulation of the
     mean-field system for $\sigma = 1 > \sigma_c$ showing that the system reaches a
     stable, spatially-periodic steady state which, using the bifurcation diagram in
     (a), is deduced to be the one emanating from $T_2$.
     (c,d) The simulation in (b) is repeated for the particle system
     \cref{eq:particleModel} with kernel function for $n = 128$ and $n = 16384$,
     respectively. (e,f) As
     in (c,d) but for a particle model with one sample of ternary matrix specified
     using \cref{eq:probKernel,eq:pPlusMinus}, \rev{$\phi_n \equiv 1$}, and {\color{magenta} driven by the same Brownian increments} as in (c,d).
     Parameters: $L =1$, $l =10 \pi$, $I(t,x) \equiv 0$, $G(x,t) \equiv \sigma$, $B =
     0.4$, $C = 1$, $\alpha = 10$, $\theta = 0.9$, $m_0(x) = 0.3\cos(k_c \pi x/l)$. 
   }
 \label{fig:turingAndComparison}
 \end{figure}

To exemplify the phenomenon, we consider a network with $q=1$ on a ring of width $2l$, $D =
\RSet/2l\ZSet$ with distance dependent kernel \rev{$\calK(x,y) = 2l A(x-y)$}, where 
\begin{equation}\label{eq:LTKernel}
  A(x) = C e^{-B|x|}(B\sin |x| + \cos x ), \qquad B, C \in \RSet_{0}
\end{equation}
linear coupling $L =1$, forcing $I(t,x) \equiv 0$, $G(t,x) \equiv \sigma$, and with
firing rate function 
\begin{equation}\label{eq:particleFiringRate}
  f(u) = \Phi(\alpha(u-\theta)), \qquad \Phi(u) = \frac{1}{2}\biggl[1 + \erf\biggl(\frac{u}{\sqrt{2}}\biggr)\biggr],
  \qquad 
  \alpha \in \RSet_{>0},
  \quad
  \theta \in \RSet,
\end{equation}  
which results in a mean-field firing rate of the form
\cite{touboulNoiseInducedBehaviorsNeural2012a}
\begin{equation}\label{eq:firingRateMeanField}
  F(m,v) = \Phi\biggl(\alpha \frac{m-\theta}{\sqrt{1+\alpha^2 v}}\biggr).
\end{equation}
The kernel $\cal{K}$ is known to support localised
stationary solutions arranged in a snaking bifurcation structure for neural fields
\cite{laingMultipleBumpsNeuronal2002,rankinContinuationLocalisedCoherent2014}, in
additions to spatially-periodic solutions emerging from a Turing bifurcation. 

We employed numerical bifurcation analysis tools to study the
bifurcation structure of steady states to of the mean-field equation in the parameter
$\sigma$ (see \cref{fig:turingAndComparison}(a)). The primary bifurcation
$T_1$ is subcritical, and a branch of unstable spatially-periodic steady states
emerges from a branch of homogeneous steady states (bottom branch). In contrast, the
secondary bifurcation, $T_2$, is supercritical and gives rise to stable
spatially-periodic equilibria. Further, the model supports localised solutions
with 1--4 core bumps, which are not arranged in a snaking diagram here, but rather on
disconnected branches. In the inset of \cref{fig:turingAndComparison}(a) we show $\{
\gamma_k \}_{k \in \NSet_{50}}$ for two values of $\sigma$ near $T_1$, which provides
evidence of a bifurcation at $\sigma_c \in (0.91,0.95)$ with wavenumber $k_c = 9$. 

\subsection{Kernel and ternary-matrix models for varying number of particles}
We carried out time simulations of the particle system with varying numbers of
neurons, and of the mean-field equation to confirm the predictions of the numerical
bifurcation analysis in \cref{fig:turingAndComparison}(c--f). Our first experiment,
in \cref{fig:turingAndComparison}(c,d) confirms the predictions by keeping the same
parameters as in \cref{fig:turingAndComparison}(b), but implements a particle model
with various $n$, and kernel matrix induced by \rev{$\calK(x,y) = 2l A(x-y)$} (see the end of
\cref{Section Connectivity} to recall this setup), \rev{and $\phi_n \equiv 1$}. 

Further, we repeat the experiment in \cref{fig:turingAndComparison}(c,d) by retaining
parameters and Brownian increments, but for one sample of a model with ternary-based
matrix. The kernel satisfies $| \mathcal{K}(x,y) | \leq  1$, hence we assign probabilities as
in \cref{eq:probKernel} with 
\begin{equation}\label{eq:pPlusMinus}
  p^+(x,y) = \max\bigl( \mathcal{K}(x,y), 0\bigr), \qquad p^-(x,y) =
  \max\bigl(-\mathcal{K}(x,y), 0\bigr), 
  \qquad \rev{\phi_n \equiv 1}.
\end{equation}
Further numerical evidence that the kernel- and ternary-matrix models agree with each
other and with the mean-field model is given
in \cref{fig:convergence-new}(b), in which we superimpose $m(x_j,T)$ (mean field) and
$u^j_T$ (particle models) for simulations leading to a localised and a
spatially-periodic stable state. The kernel- and ternary-matrix models do not display
appreciable differences hence, while the figures contain exactly three profiles at every point
$x_j$, we display in the foreground the kernel-matrix model profile for $x \leq  0$, and the
ternary-matrix one for $x >0$, to highlight the similarity.

The latter numerical experiments bring naturally the question of how the particle
system converges to the mean-field limit as the number $n$ grows, and we now turn our
attention to this.

\begin{figure}
  \centering
  \includegraphics{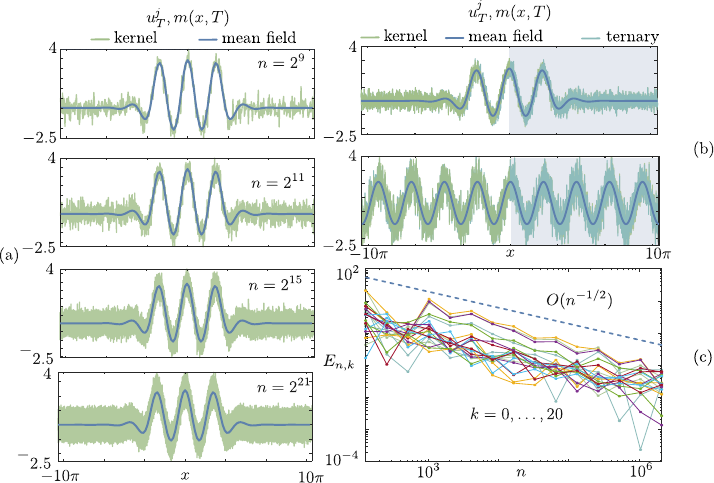}
  \caption{Convergence test for the particle and mean-field
    models with one population ($q=1$), posed on a ring of width $2l$, with synaptic
    kernel \cref{eq:LTKernel}, neuronal firing rate \cref{eq:particleFiringRate}, and
    mean-field firing rate \cref{eq:firingRateMeanField}. (a): Time simulations up to
    time $T=35$ for increasing values of $n$ approach a localised stationary state
    with 3 bumps in the core; we see that the profiles of the particle simulations
    ($u^j_T$, green) do not converge pointwise to the mean-field solution ($m(x,T)$,
    blue). 
      (b) We superimpose the solution at $t=T$ of the mean field and both kernel- and
      ternary-matrix particle models with $n = 2^{12}$ for the simulation in (a), and
      for the one in \cref{fig:turingAndComparison}(c--f). The kernel- and
      ternary-matrix models do not display appreciable differences; note that the
      figures in (b) contain exactly three profiles at every point $x_j$, but we display
      in the foreground the kernel-matrix model profile for $x \leq  0$, and the
      ternary-matrix one for $x >0$, to highlight the similarity.
    (c) The numerical
    simulations show that solutions to the particle system in (a) converge in weak
    sense to the mean-field state. The convergence is measured by $E_{n,k}$ as
    defined in \cref{eq:error}, which occurs as an
    $O(n^{-1/2})$ for $k = 0,\ldots,20$. Parameters $L =1$, $l =10 \pi$, $I(t,x)
    \equiv 0$, $G(x,t) \equiv \sigma $, $B = 0.4$, $C = 1$, $\alpha = 10$, $\theta =
    0.9$;
      in (a), (b, top panel), and (c) $\sigma = 0.45$, and $m_0(x) = 5/\cosh(0.25 x)$;
      in (b, bottom panel) $\sigma = 1$, $m_0(x) = 0.3\cos(9 \pi x/l)$; \rev{in the
      particle models we set $\phi_n \equiv 1$.}
}
  \label{fig:convergence-new}
\end{figure}

\subsection{Considerations on convergence}\label{sec:considerationsConvergence}

The empirical measure $\hat \mu^n_T$ is a mathematical object 
that facilitates an understanding of how the $n$-dimensional particle system can
converge to a continuum limit \cite{sznitman1991topics}. Following from \cref{Theorem
Almost Sure Convergence}, we know that $\hat \mu^n_T$ converges to a unique measure
$\mu_T$ weakly in $Y_T$, with probability one.

We are interested in characterizing the convergence of the marginals at particular
times, hence we do not address, numerically, the convergence of the probability
distribution across multiple times. We thus fix a particular time $t$ for which we test convergence. 

Let us suppose that $L^2(D)$ possesses a basis of orthonormal functions $\{ \phi_k
\}_{k \in \NSet} \subset C(D)$. A natural choice would be the
eigenvectors of the diffusion operator. Such a set is convergence-determining,
meaning that two measures converge weakly if and only if their expectations of
functions in this set converge. We thus test functions of the
form $f_{k,\alpha} , g_{k,\alpha,\beta}: D \times \mathbb{R}^q \to \mathbb{R}$
\[
  f_{k,\alpha}(x,u) = \phi_k(x) u_{\alpha}, \qquad 
  g_{k,\alpha,\beta}(x,u) = \phi_k(x) u_{\alpha} u_{\beta}, \qquad (k,\alpha) \in
  \NSet \times \NSet_q  
\]
for the mean and variance of $\hat \mu^n_t$, respectively. Note that technically,
these functions are not bounded (in contrast to what is required for 
two measures to converge weakly). However for our particular problem their expectations will also
converge. This is proved in the Supplementary Materials (\cref{sec:supplementary}). 

One can thus study (numerically) the convergence of the following errors
\[
  \begin{aligned}
  & E^m_{n,k}(t) = \max_{ 1\leq \alpha \leq q} 
  \Bigg|
  \frac{1}{n} \sum_{j \in \NSet_n} \phi_k(x^j_n)u^j_{\alpha,t}
  -\frac{1}{|D|}
  \int_{D} \phi_k(x) m_{\alpha}(x,t) \,d x
  \Bigg|, \\
  & E^V_{n,k}(t) = \max_{ 1\leq \alpha, \beta \leq q  } 
  \Bigg|
  \frac{1}{n} \sum_{j \in \NSet_n} \phi_k(x^j_n)u^j_{\alpha,t}u^j_{\beta,t}
  - \frac{1}{|D|}
  \int_{D} \phi_k(x) V_{\alpha\beta}(x,t) \,d x
  \Bigg|, \\
  \end{aligned}
\]
for which we expect that asymptotics are of the form
\[
  E^m_{n,k}(t), \, E^V_{n,k}(t) \in O(n^{-1/2}), \qquad \text{as $n \to \infty $},\qquad (k,t) \in \NSet \times [0,T]
\]
This rate of convergence holds in the original work of Sznitman \cite{sznitman1991topics},
and one also expects a Central Limit Theorem to hold (although this has not been
proved in this paper). The disordered connections complicate an easy adaptation of
the CLT in  \cite{sznitman1991topics}. Coppini, Lucon and Poquet have proved a
Central Limit Theorem for a similar disordered model \cite{coppini2023central}.

\subsection{Convergence}\label{ssec:convergence} 
To test convergence in \cref{fig:convergence-new} we have run a time simulation of the
model with the new kernel up to $T = 35$, which was sufficient for the solution to
approach a localised steady state with $3$ bumps at the core. In addition to the
mean-field simulation (in blue in \cref{fig:convergence-new}(a)), we have run simulations
of the particle system (in green in \cref{fig:convergence-new}(a) for various number of
$n$ between $2^8$ and $2^{21}$. The data in \cref{fig:convergence-new}(a) shows that the
particle system does not converge pointwise as $n \to \infty$, as we see an
increasing variability in the spatial profiles $u^j_T$ as $n \to \infty$. This is
entirely what one would expect from our main result in Theorem \ref{Theorem Almost
Sure Convergence}, because the limiting Gaussian measure $\mu_T$ has nonzero variance.

The results in Theorem \ref{Theorem Almost Sure Convergence} (and following discussion)
however, only concern the weak convergence of the empirical measure
(see \cite{billingsley2013convergence} for more explanation of this topology), hence we consider the following error
\begin{equation}\label{eq:error}
  E_{n,k} = \bigg| \frac{2l}{n} \sum_{j=1}^{n}  \phi_k(x^j) u^j_T - 
  \int_{-l}^{l} \phi_k(x) m(x,T) \,d x
  \bigg|,
  \qquad \phi_k(x) = \exp(i k \pi x/l),
\end{equation}
where $| \blank |$ denotes the absolute value for complex numbers. In
\cref{fig:convergence-new}(c) we plot $E_{n,k}$ as a function of $n$ for $k=0,\ldots,20$,
and we find evidence that $E_{n,k} = O(n^{-1/2})$ for the tested values of $k$. This
is precisely the rate of convergence that one expects from the Central Limit Theorem
\cite{graham1997stochastic,coppini2023central} (although we have not proved a CLT in
this paper).

\subsection{Spiral wave simulations} \label{ssec:spiral}
\begin{figure}
  \centering
  \includegraphics{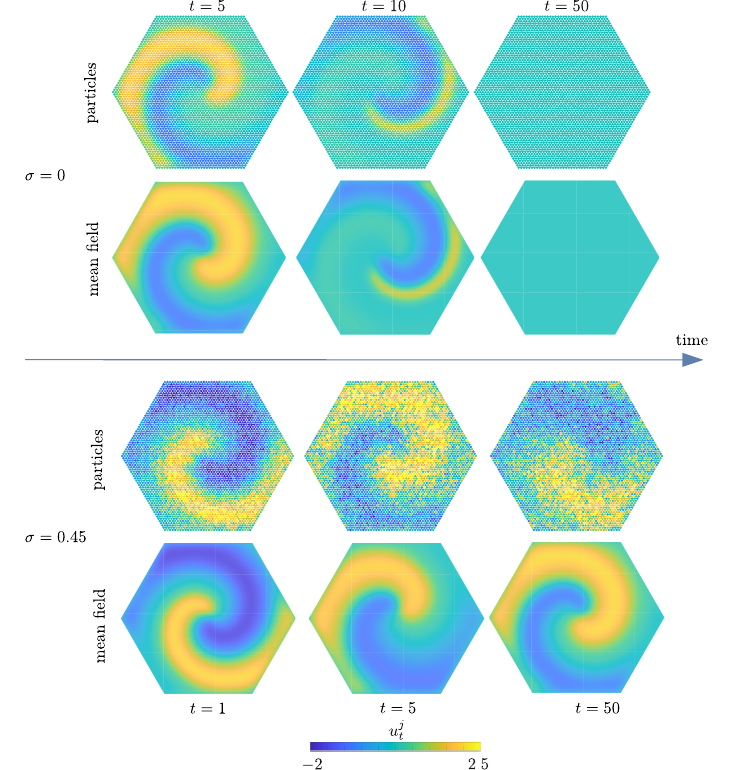}
  \caption{
    Comparison between mean-field and particle model for the spiral wave
    simulations of \cref{fig:patterns}(b) for $\sigma = 0$ (top) and $\sigma =0.45$
    (bottom). In both cases a coherent structure is sustained only for sufficiently
    large values of $\sigma$. The wave in the particle simulation is influenced by
    noise, as expected. In the
  \href{https://figshare.com/articles/media/Animations_for_the_paper_Neural_fields_and_Noise-Induced_Patterns_in_Neurons_on_Large_Disordered_Networks_/26763037?file=51540848}{accompanying
  animation}, where the simulation is carried out for $t \in [0,100]$ we see that the
  spiral wave in the particle model departs in phase from the one in the mean-field
(see figure for $t = 50$) but regains phase on longer times. We highlight 
that, as discussed in \cref{ssec:convergence}, we only expect weak (not pointwise)
convergence. \rev{In the particle model we set $\varphi_n \equiv 1$.}}
  \label{fig:sprialComparison}
\end{figure}
Spiral waves experiments of
\cref{fig:patterns}(b) have been carried out on a hexagonal cortex $D$ inscribed in
a disk of radius $30$, in which neurons are uniformly distributed on a triangular grid
(for details about meshing, and a similar spiral-wave simulation see \cite[Tutorial
2]{avitabile2024}). See also recent work by Lucon and Poquet that identified oscillatory behavior in coupled Fitzhugh-Nagumo neurons \cite{luccon2021periodicity}.

The synaptic connection for the simulation are sparse, determined by a
distance-dependent kernel $\calK(r,r') = \rev{| D|} \nu A(\| r - r' \|)$, 
\rev{where $|D|$ is the Lebesgue measure of $D$,}
with $\nu \in
\RSet_{>0}$ and
\[
  A(x)=
  \begin{cases}
    a(x) & \text{if $|a(x)|> 10^{-3}$,} \\
    0           & \text{otherwise,}
  \end{cases}
  \qquad 
  a(x) = \int_{0}^{\infty }J_{0}(xs)\frac{s}{s^4+s^2+1} \,ds.
\]
The dynamics of the model includes a recovery variable therefore we have $q = 2$ in
the particle system, and we set
\[
  L = 
  \begin{bmatrix}  
    L_{11} & L_{12} \\
    L_{21} & L_{22}
  \end{bmatrix}, 
  \qquad 
  f = 
  \begin{bmatrix}  
    \nu \Phi(\alpha(u-\theta)) \\
    0
  \end{bmatrix},
\]
with $\nu \in \RSet_{\geq 0}$ and $\Phi$ as in \cref{eq:particleFiringRate}. In the
simulation we also set $I(t,x) \equiv 0$ and $G(t,x) \equiv \sigma$.
It is possible to carry out a numerical bifurcation analysis for the spiral wave
pattern in the parameter $\sigma$ for the mean-field equations (see for instance
\cite{laingSpiralWavesNonlocal2005}), along the lines of what was done for the ring
model. We did not take this route here, and inspected the spiral wave onset with
direct numerical simulation. The parameters in \cref{fig:patterns}(b) have been
chosen so that the \textit{mean-field model} with assigned initial condition $m(0,x)$
does not exhibit a stable spiral wave for $\sigma = 0$; in a set of simulations where
$m(0,x)$ but $\sigma$ was increased, we found values of $\sigma$ for which a stable spiral
wave is supported by the mean-field model. We then carried out numerical simulations
for the particle model, to find an analogous behaviour. \Cref{fig:patterns} shows the
spiral wave onset in the particle model, whereas \cref{fig:sprialComparison}
compares simulations in the mean field and particle model. From the figure and the
\href{https://figshare.com/articles/media/Animations_for_the_paper_Neural_fields_and_Noise-Induced_Patterns_in_Neurons_on_Large_Disordered_Networks_/26763037?file=51540848}{accompanying
animation} we see that the mean field and particle simulations produce both spiral
waves, albeit the structure in the particle model displays transient phase slips, and
it is visibly altered by noise. This is to be expected, in line of the weak
convergence mode discussed in \ref{sec:considerationsConvergence}. Incidentally the
transient phase slips are also entirely expected; this phenomenon is well-known in
the Kuramoto model \cite{bertini2014synchronization}, see also
\cite{maclaurin2023phase,adams2025isochronal}.



\section{Proofs}\label{sec:proof}

The proofs employ the theory of Large Deviations \cite{Dembo1998}, which
characterizes the exponential asymptotics of the probability distribution for the
system. The proofs proceed by comparing the disordered system to progressively
simpler systems. In Section \ref{Section Large Deviations Uncoupled}, we note the
Large Deviations for the empirical measure generated by the Brownian Motions (this is
a corollary of Sanov's theorem). In Section \ref{subsection approximate heterogeneous
averaged}, we prove a Large Deviation Principle for the system with averaged
interactions by showing that its associated empirical measure can be written as a
continuous mapping of the uncoupled empirical measure. This method of proof appears
to have been pioneered by Tanaka \cite{tanaka1984limit}, and more recently has been
extended to spatially-distributed systems in \cite{maclaurin2024large}. Next, in
Section \ref{Section Disoredered Coupling}, we prove a Large Deviation Principle for
the system with disordered interactions by showing that the Radon-Nikodym derivative
is relatively small, with extremely high probability. Finally, in Section
\ref{Section Limiting Equations}, we prove that the Large Deviations rate function
has a unique zero, and this yields the limiting equations. Extra details are provided
in the Supplementary Materials (\cref{sec:supplementary}).


\subsection{Large Deviations of the Uncoupled System}
\label{Section Large Deviations Uncoupled} In this section we outline the Large
Deviations of the empirical measure generated by independent Brownian Motions. This
theory is already known and is essentially due to Sanov \cite{Dembo1998}. The result
in this section is useful because in the following section we demonstrate that the
empirical measure for the coupled system can be obtained by applying a continuous
transformation to the empirical measure resulting from the Brownian Motions.

We define the $\RSet^q$-valued random variables $\tilde W^j_{t}$ with components
given by
\[
  \tilde{W}^j_{\alpha,t} := (\tilde W^j_t)_{\alpha} = \int_0^t \sum_{\beta = 1}^qG_{\alpha\beta}(s,x^j_n) dW^j_{\beta,s}.
\]
We introduce the Banach space $(B_T,\| \blank \|_{B_T})$ with
\[
  B_T = D \times \mathbb{R}^q \times C\big( [0,T], \mathbb{R}^q \big), 
  \qquad 
  \| (x,z,w) \|_{B_T} = \| x \|_{\RSet^d} + \|  z \|_{\RSet^q} + \| w \|_{T},
\]
and study the convergence of the following empirical measure up to time $T$
\begin{equation}\label{eq:muTildeDef}
  \tilde{\mu}^n_T = n^{-1} \sum_{j \in \NSet_n} \delta_{b^j}
  \qquad 
  b^j = \bigl(x^j,u_0^j,\{ \tilde W^j_{t} \colon t \in [0,T] \}\bigr),
\end{equation}
which lives in the space
\begin{equation}\label{eq:XTSpace}
  X_T = \{ \mu \in  \calP(B_T) \colon \mathbb{E}^{b \sim
  \mu}[ \| b \|_{B_T} ] < \infty \}.
\end{equation}
The topology of $X_T$ is metrized by the
Wasserstein Distance, defined as follows. For $\mu,\nu \in X_T$, we define
\[
d_{X,T}(\mu , \nu) = \inf_{\xi \in \Gamma(\mu,\nu)} \mathbb{E}^{(b,\tilde b) \sim \xi}\big[ \| b - \tilde b \|_{B_T} \big],
\]
where as previously $\Gamma(\mu,\nu)$ is the set of all couplings between $\mu$ and $\nu$.


Next we define the rate function $\mathcal{I}_T(\mu)$ that governs the Large Deviations of
$\tilde{\mu}^n_T$. To this end, for $x\in D$, let $\mathcal{W}_T(x) \in
\mathcal{P}\big( \mathbb{R}^q \times C([0,T],\mathbb{R}^q) \big)$ be the law of
Gaussian processes $\{ w_{\alpha ,t}(x)) \colon
\alpha \in \NSet_q, \; t \in [0,T]\}$ with the following covariance structure: 
\begin{enumerate}
  \item  For any $(\alpha,t) \in \NSet_q \times [0,T]$, $w_{\alpha ,t}(x)$ has zero mean;
  \item For any $\alpha, \beta \in \NSet_q$ it holds
    \[
      \begin{aligned}
      & \mathbb{E}\big[ w_{\alpha,s}(x) w_{\beta,t}(x) \big]  
        = \int_0^s \sum_{\gamma=1}^q G_{\alpha\gamma,r}(x) G_{\beta\gamma,r}(x) dr,
         \qquad s\leq t \leq T.
      \end{aligned}
    \]
\end{enumerate}
We first stipulate that $\mathcal{I}_T(\mu) := \infty$ if the marginal of $\mu$ over its
first two variables is not equal to $\kappa$ (recall that $\kappa$ is the measure
over $D \times \mathbb{R}^q$ that the empirical measure at time $0$ converges to). Otherwise for $\mu \in X_T$, $x \in D$ and $z\in \mathbb{R}^q$ let $\mu_{x,z} \in
\mathcal{P}\big(  C([0,T],\mathbb{R}^q) \big)$ denote the law of
$\mu$ conditioned on the values of its first two variables. We can now define
\begin{align}\label{eq: I T mu definition}
\mathcal{I}_T(\mu) :=  \int_{D \times \mathbb{R}^q} \mathcal{R}\big(\mu_{x,z} || \mathcal{W}_T(x)\big) \kappa(dx,dz),
\end{align}
where $\mathcal{R}(\blank || \blank)$ is the Relative Entropy \cite{Budhiraja2019}. Note that we are following the `$0\times \infty = 0$' convention in \eqref{eq: I T mu definition}.
 
The following result is a well-known corollary of Sanov's Theorem \cite{Dembo1998}.
\begin{theorem} \label{Theorem Large Deviations of Uncoupled System}
Let $\mathcal{A}, \mathcal{O} \subseteq X_T $ be (respectively) closed and open. Then
\begin{align}
\lsup{n} n^{-1} \log \mathbb{P} \big(\tilde{\mu}^n_T  \in \mathcal{A} \big) \leq & - \inf_{\mu \in \mathcal{A}} \mathcal{I}_T(\mu) \label{eq: LDP upper bound A uncoupled}  \\
\linf{n} n^{-1} \log \mathbb{P} \big(\tilde{\mu}^n_T  \in \mathcal{O} \big) \geq & - \inf_{\mu \in \mathcal{O}} \mathcal{I}_T(\mu) .
\end{align}
Furthermore $\mu \mapsto \mathcal{I}_T(\mu)$ is lower semi-continuous, and has compact level sets.
\end{theorem}

\subsection{Large Deviations of the Averaged System}\label{subsection approximate heterogeneous averaged}

The main result of this section is that the system with averaged non-random
connectivity is a tight approximation to the system with random connectivity, on
an exponential scale. The proof proceeds by pushing-forward the empirical measure generated by the Brownian Motions by a continuous mapping. This technique was employed in the seminal work of Tanaka \cite{tanaka1984limit}, and more recently has been employed for a spatially-distributed system in \cite{maclaurin2024large}. We introduce the $n$-dimensional system with averaged
connectivity,
\begin{equation} \label{eq: v processes}
  \begin{aligned}
    & dv^j_{\alpha,t} = \bigg( - \sum_{\beta =1}^{q}  L_{\alpha\beta} v^j_{\beta,t} 
    &\begin{aligned}[t]
      &+ n^{-1} \sum_{k=1}^n \sum_{\beta =1}^{q} 
        \mathcal{K}_{\alpha\beta}(x^j_n,x^k_n) f_{\beta}( v^k_t) 
      + I_{\alpha,t}(x^j_n) \bigg) dt \\
      &+ \sum_{\beta = 1}^qG_{\alpha\beta,t}(x^j_n) dW^j_{\beta,t}, 
      \end{aligned}\\
    & v^i_\alpha  = z_\alpha^i
  \end{aligned}
\end{equation}
which shares with the particle system \cref{eq:particleComponent} identical Brownian
motions and initial conditions. We then define the empirical measure for the system
with averaged interactions
\begin{align}
\grave{\mu}^n_T = n^{-1}\sum_{j=1}^n \delta_{x^j_n , v^j} \in Y_T.
\end{align}
with $Y_T$ given by \cref{eq:YTSpace}. In the Supplementary Materials (\cref{sec:supplementary}), we prove the following lemma.
\begin{lemma} \label{Lemma Continuous Transformation}
There exists a continuous function $\Phi_T: X_T \mapsto Y_T$ such that, with unit probability,
\begin{align}
\grave{\mu}^n_T = \Phi_T(\tilde{\mu}^n_T).
\end{align}
\end{lemma}
Next, define the rate function $\mathcal{J}_T: Y_T \to \mathbb{R}$ to be such that
\begin{align}\label{eq: transformed rate function}
\mathcal{J}_T(\mu) := \inf\big\lbrace \mathcal{I}_T(\nu) \; : \;  \mu = \Phi_T(\nu) \big\rbrace .
\end{align}
This leads us to a Large Deviation Principle for the averaged system.
\begin{lemma} \label{Theorem Large Deviations Coupled Averaged}
For any sets $\mathcal{A},\mathcal{O} \subset Y_T$, with
$\mathcal{A}$ closed and $\mathcal{O}$ open, 
\begin{align}
\lsup{n} n^{-1}\log \mathbb{P}\big( \grave{\mu}^n_T  \in \mathcal{A} \big)  &\leq - \inf_{\mu \in \mathcal{A}} \mathcal{J}_T(\mu) \label{eq: intermediate ldp upper 2 }\\
\linf{n} n^{-1}\log \mathbb{P}\big( \grave{\mu}^n_T  \in \mathcal{O} \big)  &\geq - \inf_{\mu \in \mathcal{O}} \mathcal{J}_T(\mu) .\label{eq: intermediate ldp lower 2 }
\end{align}
Furthermore $\mu \mapsto \mathcal{J}_T$ is lower semi-continuous, and $\mathcal{J}_T$ has compact level sets.
\end{lemma}
\begin{proof}
Since
\begin{align}\label{eq: Phi T lifts}
\grave{\mu}^n_T = \Phi_T\big( \tilde{\mu}^n_T \big),
\end{align}
and $\Phi_T$ is continuous, the Large Deviation Principle must hold with rate function $\mathcal{J}_T$, thanks to the Contraction Principle \cite{Dembo1998}.
\end{proof}

\subsection{Disordered Coupling} \label{Section Disoredered Coupling}
In this section, we use the Large Deviations result of the system with averaged interactions (as proved in Lemma \ref{Theorem Large Deviations Coupled Averaged}) to characterize the Large Deviations of the system with disordered interactions. We start by restating Theorem \ref{Theorem LDP Coupled System}.
\begin{theorem} \label{Theorem Large Deviations Equivalence}
For any sets $\mathcal{A},\mathcal{O} \subset Y_T$, such that
$\mathcal{A}$ is closed and $\mathcal{O}$ is open, 
\begin{align}
\lsup{n} n^{-1}\log \mathbb{P}\big( \hat{\mu}^n_T  \in \mathcal{A} \big)  &\leq - \inf_{\mu \in \mathcal{A}} \mathcal{J}_T(\mu) \label{eq: intermediate ldp upper }\\
\linf{n} n^{-1}\log \mathbb{P}\big( \hat{\mu}^n_T  \in \mathcal{O} \big)  &\geq - \inf_{\mu \in \mathcal{O}} \mathcal{J}_T(\mu) .\label{eq: intermediate ldp lower }
\end{align}
\end{theorem}

Let $P^n \in \mathcal{P}\big( \mathcal{C}([0,T],\mathbb{R}^{qn}) \big)$ be the law of the stochastic processes $(v^j_t)_{j\in \mathbb{N}_n, t\in [0,T]}$ and let $Q^n \in \mathcal{P}\big( \mathcal{C}([0,T],\mathbb{R}^{qn}) \big)$ be the law of the original particle system $(u^j_t)_{j\in \mathbb{N}_n, t\in [0,T]}$. Write
\begin{equation}
z^j_{\alpha,t} = n^{-1}\sum_{\beta \in \NSet_q}\sum_{k \in \NSet_n} \big( \phi_n^{-1} K_{\alpha\beta}^{jk} 
      - \mathcal{K}_{\alpha\beta}(x^j_n,x^k_n) \big) f_{\beta}( u^k_{t}).
\end{equation}
Thanks to Girsanov's Theorem \cite{le2016brownian}
\begin{align} \label{eq: Girsanov Equation}
\frac{dQ^n}{dP^n} = \exp\big(n\Gamma_n(u) \big),
\end{align}
where $\Gamma_n: \mathcal{C}([0,T],\mathbb{R}^{qn}) \to \mathbb{R}$ is such that, writing $H(t,x) := \big(H_{\alpha\beta}(t,x) \big)_{\alpha,\beta \in \mathbb{N}_n} \in \mathbb{R}^{q \times q}$ to be the matrix inverse of $G(t,x) G(t,x)^T$,
\begin{multline}
\Gamma_n(u) = n^{-1}\sum_{j\in \mathbb{N}_n} \sum_{\alpha,\beta \in \mathbb{N}_q} \int_0^T \bigg\lbrace z^j_{\beta,s} H_{\alpha\beta}(s,x^j_n) \bigg( du^j_{\alpha,s} -  \sum_{\gamma \in \mathbb{N}_q} L_{\alpha\gamma}u^j_{\gamma,s}ds\\ - n^{-1}\phi_n^{-1}\sum_{k\in \mathbb{N}_n}\sum_{\gamma \in \mathbb{N}_q} K^{jk}_{\alpha\gamma}f(u^k_{\gamma,s}) ds \bigg) - \frac{1}{2} z^j_{\beta,s} z^j_{\alpha,s}H_{\alpha\beta}(s,x^j_n) \bigg\rbrace ds .
\end{multline}
We first show that $\Gamma_n$ is small with very high probability.
\begin{lemma}\label{Lemma Gamma n bound}
For any $\epsilon > 0$,
\begin{align}
\lsup{n} n^{-1}\log \mathbb{P}\big( \big| \Gamma_n(u) \big| \geq \epsilon \big) = - \infty.
\end{align}
\end{lemma}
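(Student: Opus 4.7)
First I would split $\Gamma_n = M_n + A_n$ by substituting the particle SDE \cref{eq:particleComponent} into the $du^j_{\alpha,s}$ appearing in the definition of $\Gamma_n$: this isolates the $G\,dW$ contribution of $du^j$ into a stochastic-integral part $M_n$ and gathers all the $ds$ contributions into a finite-variation part $A_n$. Explicitly,
\[
  M_n = n^{-1}\sum_{j,\alpha,\beta,\gamma}\int_0^T z^j_{\beta,s}\,H_{\alpha\beta}(s,x^j)\,G^j_{\alpha\gamma,s}\,dW^j_{\gamma,s},
\]
while $A_n$ is a finite sum of integrals of the schematic form $n^{-1}\sum_j\int_0^T z^j_s\cdot(\text{bounded})\,ds$ and $n^{-1}\sum_j\int_0^T (z^j_s)^\top H(s,x^j)\,z^j_s\,ds$. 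Because $\{W^j\}_j$ are independent Brownian motions and the integrand is adapted, $M_n$ is a continuous $L^2$-martingale with $M_{n,0}=0$.

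The key deterministic input is \cref{hyp:connections}: inserting $y^k_\beta := f_\beta(u^k_s)/\|f\|_\infty\in[-1,1]$ in the suprema defining $R_n^\pm$ yields the \emph{pathwise} estimate
\[
  n^{-1}\sum_{j\in\mathbb{N}_n,\,\alpha\in\mathbb{N}_q}\bigl(z^j_{\alpha,s}\bigr)^2 \leq 2q\,\|f\|_\infty^2\bigl(R_n^+ + R_n^-\bigr),
\]
uniform in $\omega$ and in $s\in[0,T]$. Cauchy--Schwarz then gives a matching $O\bigl((R_n^++R_n^-)^{1/2}\bigr)$ bound on $n^{-1}\sum_j|z^j_s|$. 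Using boundedness of $H$ on $[0,T]\times D$ (from continuity of $G$ on the compact domain together with $\inf|\det G|>0$ in \cref{hyp:functions}) and boundedness of $I$, these estimates force $|A_n|\to 0$ \emph{pointwise in $\omega$}. Hence $\mathbb{P}(|A_n|\geq\epsilon)=0$ eventually, and $n^{-1}\log\mathbb{P}(|A_n|\geq\epsilon)\to-\infty$ trivially.

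For $M_n$, the symmetry of $H=(GG^\top)^{-1}$ and the identity $H\,GG^\top H=H$ evaluate the predictable quadratic variation to
\[
  \langle M_n\rangle_T = n^{-2}\sum_j\int_0^T (z^j_s)^\top H(s,x^j)\,z^j_s\,ds \leq C\,n^{-1}\bigl(R_n^+ + R_n^-\bigr),
\]
again a \emph{deterministic} bound. Bernstein's exponential inequality for continuous martingales with deterministically-bounded quadratic variation then yields
\[
  \mathbb{P}\bigl(|M_n|\geq\epsilon\bigr)\leq 2\exp\!\Bigl(-\tfrac{\epsilon^2 n}{2C(R_n^++R_n^-)}\Bigr),
\]
so $n^{-1}\log\mathbb{P}(|M_n|\geq\epsilon)\leq n^{-1}\log 2-\epsilon^2/[2C(R_n^++R_n^-)]\to-\infty$ by \cref{hyp:connections}. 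The union bound $\mathbb{P}(|\Gamma_n|\geq 2\epsilon)\leq\mathbb{P}(|M_n|\geq\epsilon)+\mathbb{P}(|A_n|\geq\epsilon)$ then concludes.

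The hardest step is securing the quadratic-variation bound \emph{uniformly in $\omega$}: this is precisely why \cref{hyp:connections} is formulated with an outer supremum over all $y\in[-1,1]^{qn}$, rather than only in expectation or for a typical realization. Only with that uniform control does Bernstein's inequality supply a deterministic denominator that vanishes strictly faster than $n^{-1}$, which is what upgrades the usual $\sqrt{n}$-type central-limit rate into the superexponential rate required here.
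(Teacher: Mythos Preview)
Your argument is correct and follows essentially the same route as the paper: decompose $\Gamma_n$ into a finite-variation part controlled deterministically by $R_n^{\pm}\to 0$ via \cref{hyp:connections}, and a martingale part whose quadratic variation inherits the same deterministic $o(n^{-1})$ bound. The only substantive difference is the tool used for the martingale tail: you invoke Bernstein's exponential inequality directly, whereas the paper uses the Dambis--Dubins--Schwarz time-change to a Brownian motion and then standard Gaussian estimates; these are interchangeable here. Your remark that $A_n$ may contain linear-in-$z$ terms (from $I$) is a harmless over-generality---after the substitution under $Q^n$ the finite-variation part reduces to the single quadratic term $\tfrac{1}{2n}\sum_j\int_0^T (z^j_s)^\top H(s,x^j)\,z^j_s\,ds$, as the paper's rearrangement \eqref{eq: Gamma n u expansion} records---but your bound would absorb such terms regardless.
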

\begin{proof}
It suffices to prove that for any $\ell , \epsilon > 0$,  
\begin{equation} \label{eq: Girsanov to show}
  \lsup{n} n^{-1}\log \mathbb{P}\big( |\Gamma_n(u)| \geq \epsilon   \big) \leq - \ell.  
\end{equation}
Through rearranging, we find that $Q^n$-almost-surely,
\begin{align} \label{eq: Gamma n u expansion}
\Gamma_n(u) =\frac{1}{2n} \sum_{j \in \mathbb{N}_n} \sum_{\alpha,\beta \in \mathbb{N}_q} \int_0^T  z^j_{\beta,s} z^j_{\alpha,s}H_{\alpha\beta}(s,x^j_n) ds +  X_T,
\end{align}
where
\[
X_t = n^{-1}\sum_{j \in \mathbb{N}_n} \sum_{\alpha,\beta,\gamma \in \mathbb{N}_q} \int_0^t z^j_{\alpha,s}H_{\alpha\beta}(s,x^j_n) G_{\beta\gamma}(s,x^j_n) dW^j_{\gamma,s}.
\]
Now our assumptions on the diffusion coefficient dictate that the singular values of $G_{\alpha\beta}(s,x^j_n)$ possess (i) a uniform upper bound and (ii) a strictly positive lower bound. This implies that the operator norm of $H(s, x^j_n )$ is uniformly upperbounded by some constant $\bar{C}$, for all $s\leq T$, all $j\in \NSet_n$ and all $n\geq 1$). We thus find that there is a constant $c > 0$ such that
\begin{align}
\frac{1}{2n} \bigg| \sum_{j \in \mathbb{N}_n} \sum_{\alpha,\beta \in \mathbb{N}_q} \int_0^T  z^j_{\beta,s} z^j_{\alpha,s}H_{\alpha\beta}(s,x^j_n) ds \bigg| \leq \frac{c}{2n} \sum_{j \in \mathbb{N}_n} \sum_{\alpha  \in \mathbb{N}_q} \int_0^T  (z^j_{\alpha,s})^2 ds .
\end{align}
Our assumption on the connectivity in Hypothesis \ref{hyp:connections} implies that
\begin{align}
 \frac{c}{2n} \sum_{j \in \mathbb{N}_n} \sum_{\alpha  \in \mathbb{N}_q} \int_0^T  (z^j_{\alpha,s})^2 ds \to 0
\end{align}
as $n\to\infty$, at a uniform rate.

It thus remains to prove that for arbitrary $\epsilon > 0$,
\begin{equation} \label{eq: Girsanov to show 2}
  \lsup{n} n^{-1}\log \mathbb{P}\big( | X_T| \geq \epsilon  \big) \leq - \ell. 
\end{equation}
Now $X_t$ is a Martingale, and our assumptions imply that its quadratic variation possesses a uniform upperbound of the form, for some constant $C > 0$ ($C$ is independent of $n$),
\begin{align}
qv(t) \leq C n^{-2} \sum_{j\in \mathbb{N}_n} \sum_{\alpha \in \mathbb{N}_q} \int_0^t (z^j_{\alpha,s})^2 ds.
\end{align}
Thus, thanks to Hypothesis \ref{hyp:connections}, there must exist a non-random sequence $(\epsilon_n)_{n\geq 1}$ such that (i) $\lim_{n\to\infty} \epsilon_n = 0$ and (ii)
\begin{align} \label{eq: QV goes to zero}
n   qv(T) \leq \epsilon_n .
\end{align}
Using the fact that a continuous martingale can be represented as a time-rescaled Brownian Motion $w(t)$ (see \cite{karatzasBrownianMotionStochastic1998}),
\begin{equation} \label{eq: Girsanov to show 3}
  \mathbb{P}\big( |X_T| \geq \epsilon  \big) \leq \mathbb{P}\big( \sup_{t \leq n^{-1}\epsilon_n} \big| w( t  )\big| \geq \epsilon \big).
\end{equation}
Since $\epsilon_n \to 0$, standard properties of Brownian Motion \cite{morters2010brownian} dictate that 
\[
 \lsup{n} n^{-1} \log \mathbb{P}\big( \sup_{t \leq n^{-1}\epsilon_n} \big| w( t  )\big| \geq \epsilon \big)  = -\infty,
\]
as required.
\end{proof}
We are now ready to prove Theorem \ref{Theorem Large Deviations Equivalence}.
\begin{proof}
We start with the upperbound \eqref{eq: intermediate ldp upper }. Define the event
\[
\mathcal{V}^n_{\delta} = \big\lbrace |\Gamma_n(u)| \leq \delta \big\rbrace .
\]
Let $\mathcal{O} \subset Y_T$ be open. Then. 
\begin{align*}
    \linf{n} n^{-1}\log \mathbb{P}\big( \hat{\mu}^n_T  \in \mathcal{O} \big) &\geq \linf{n} n^{-1}\log \mathbb{P}\big( \hat{\mu}^n_T  \in \mathcal{O} , \mathcal{V}^n_{\delta} \big) \\
    &\geq -\delta +    \linf{n} n^{-1}\log \mathbb{P}\big( \grave{\mu}^n_T  \in \mathcal{O} , \mathcal{V}^n_{\delta} \big),
\end{align*}
by using \eqref{eq: Girsanov Equation} to change variables. Now
\begin{align*}
\mathbb{P}\big( \grave{\mu}^n_T  \in \mathcal{O} , \mathcal{V}^n_{\delta} \big) =
\mathbb{P}\big( \grave{\mu}^n_T  \in \mathcal{O} \big) - \mathbb{P}\big( \grave{\mu}^n_T  \in \mathcal{O} , ( \mathcal{V}^n_{\delta})^c \big).
\end{align*}
Furthermore, it follows from Lemma \ref{Lemma Gamma n bound} that
\begin{align}
\lsup{n} n^{-1} \log \mathbb{P}\big( \grave{\mu}^n_T  \in \mathcal{O} , ( \mathcal{V}^n_{\delta})^c \big) = -\infty.
\end{align}
We thus find that
\begin{align*}
\linf{n} n^{-1} \log \mathbb{P}\big( \grave{\mu}^n_T  \in \mathcal{O} , \mathcal{V}^n_{\delta} \big) 
= \linf{n} n^{-1}\log \mathbb{P}\big( \grave{\mu}^n_T  \in \mathcal{O} \big) 
\geq - \inf_{\mu \in \mathcal{O}} \mathcal{J}_T(\mu),
\end{align*}
using the LDP lower bound in Lemma \ref{Theorem Large Deviations Coupled Averaged}. Taking $\delta \to 0$, we therefore obtain that
\[
\linf{n} n^{-1}\log \mathbb{P}\big( \hat{\mu}^n_T  \in \mathcal{O} \big) \geq - \inf_{\mu \in \mathcal{O}} \mathcal{J}_T(\mu).
\]
Turning to the upperbound, let $\mathcal{A} \subset Y_T$ be closed. Then for any $\delta > 0$,
\begin{align*}
    \lsup{n} n^{-1}\log \mathbb{P}\big( \hat{\mu}^n_T  \in \mathcal{A} \big) \leq & \max\bigg\lbrace \lsup{n} n^{-1}\log \mathbb{P}\big( \hat{\mu}^n_T  \in \mathcal{A} , \mathcal{V}^n_{\delta} \big) , \lsup{n} n^{-1}\log \mathbb{P}\big( (\mathcal{V}^n_{\delta})^c \big) \bigg\rbrace \\
    = & \lsup{n} n^{-1}\log \mathbb{P}\big( \hat{\mu}^n_T  \in \mathcal{A} , \mathcal{V}^n_{\delta} \big),
    \end{align*}
    thanks to Lemma \ref{Lemma Gamma n bound}. Thanks to Girsanov's Theorem (i.e. \eqref{eq: Girsanov Equation}),
\begin{align*}
  \lsup{n} n^{-1}\log \mathbb{P}\big( \hat{\mu}^n_T  \in \mathcal{A} , \mathcal{V}^n_{\delta} \big)   \leq &\delta +    \lsup{n} n^{-1}\log \mathbb{P}\big( \grave{\mu}^n_T  \in \mathcal{A} \big).
\end{align*}
Using the LDP upperbound of Lemma \ref{Theorem Large Deviations Coupled Averaged},
\begin{align}
\lsup{n} n^{-1}\log \mathbb{P}\big( \grave{\mu}^n_T  \in \mathcal{A} \big)  &\leq - \inf_{\mu \in \mathcal{A}} \mathcal{J}_T(\mu) ,  
\end{align}
and hence after taking $\delta \to 0^+$ we find that
\begin{align}
\lsup{n} n^{-1}\log \mathbb{P}\big( \hat{\mu}^n_T  \in \mathcal{A} \big)  &\leq - \inf_{\mu \in \mathcal{A}} \mathcal{J}_T(\mu) . 
\end{align}
\end{proof}

\subsection{Limiting Equations} \label{Section Limiting Equations}

We finish by proving the almost-sure convergence of \cref{Theorem Almost Sure Convergence}. We will do this by showing that the rate function $\mathcal{J}_T$ has a unique zero.
\begin{proof}

We first notice that $\mathcal{I}_T$ has a unique zero, written as $\nu_* \in
X_T$. $\nu_{*}$ can be written as the law of random variables
$(x,u_0,w)$ that are such that: (i) the law of $(x,u_0)$ is $\kappa$ and (ii)
the law of $w \in C\big( [0,T], \mathbb{R}^q \big)$, conditionally on the other
variables, is $\mathcal{W}_T(x)$. This fact follows from the definition in \cref{eq:
I T mu definition}, since $\mathcal{R}(\alpha || \mathcal{W}_{T}(x))$ is strictly
positive, except when $\alpha = \mathcal{W}_T(x)$, in which case it is identically
zero \cite{Budhiraja2019}. 

Write $B_{\epsilon}(\nu_*) \subset X_T$ to be the $\epsilon$-ball about $\nu_*$ (with respect to the Wasserstein Distance). The Large Deviations Upperbound \cref{eq: LDP upper bound A uncoupled} thus implies that
\begin{align}
\lsup{n} n^{-1}\log \mathbb{P} \big( \tilde{\mu}^n_T \notin B_{\epsilon}(\nu_*) \big) < 0.
\end{align}
Since $\Phi_T$ is continuous, and we have the identity \cref{eq: Phi T lifts}, we therefore find that
\begin{align}
\lsup{n} n^{-1}\log \mathbb{P}\big( \grave{\mu}^n_T \notin \Phi_T\big( B_{\epsilon}(\nu_*) \big) \big) = \lsup{n} n^{-1}\log P^n\big( \tilde{\mu}^n_T \notin B_{\epsilon}(\nu_*) \big) < 0.
\end{align}
Thus $\mathcal{J}_T$ must have a unique zero at $\Phi_T(\nu_{*})$. Thus for any $k\in \mathbb{Z}^+$,
\begin{align}
\lsup{n} n^{-1}\log \mathbb{P} \big( d_{Y_T}\big( \hat{\mu}^n_T , \Phi_T(\nu_*) \big)  \geq k^{-1} \big)  < 0,
\end{align}
 thanks to the Large Deviations upperbound of Theorem \ref{Theorem Large Deviations Equivalence}. Furthermore, since
\begin{align}
\sum_{n=1}^{\infty}   \mathbb{P}\big( d_{Y,T}\big( \hat{\mu}^n_T , \Phi_T(\nu_*) \big)  \geq k^{-1} \big)  < \infty.
\end{align}
by the Borel-Cantelli Lemma, with unit probability 
\[
\lsup{n} d_{Y,T}\big( \hat{\mu}^n_T , \Phi_T(\nu_*) \big) < k^{-1}.
\]
Since $k \in \mathbb{Z}^+$ is arbitrary, with unit probability it must be that
\[
\lim_{n\to\infty} d_{Y_T}\big( \hat{\mu}^n_T , \Phi_T(\nu_*) \big) = 0.
\]
To finish, we characterize $\Phi_T(\nu_*) := \mu$. From the definition of the mapping $\Phi_T$,
$\mu \in \mathcal{P}\big( D \times C([0,T],\mathbb{R}^q) \big)$ is the law of random variables $(x,v)$, where $(x,v_0)$ is distributed according to $\kappa$, and (conditionally on $x$), for Brownian Motions $\lbrace W_{\alpha,t} \rbrace_{1\leq \alpha \leq q}$ that are independent of the initial conditions,
\begin{multline}
dv_{\alpha,t} = \bigg( - \sum_{\beta=1}^q L_{\alpha\beta} v_{\beta,t} + I_\alpha(t,x)+ \sum_{\beta=1}^q \int \mathcal{K}_{\alpha\beta}(x,y)f_{\beta}(\tilde{v}_s)\mu(dy,d\tilde{v}) \bigg) dt\\ + \sum_{\beta=1}^q G_{\alpha\beta,t}(x) dW_{\beta,t}. \label{eq: limiting SDE }
\end{multline}
Standard theory dictates that (i) there is a unique strong solution to \cref{eq:
limiting SDE } and (ii) the solution $v_{\alpha,t}$ is Gaussian (conditionally on
$x$) \cite{karatzasBrownianMotionStochastic1998}. The equations governing the evolution of the mean and variance (as outlined in Section \ref{ssec:mainResults}) follow immediately (see for instance \cite[Section 5.6]{karatzasBrownianMotionStochastic1998}). 
\end{proof}

\section{Conclusion}\label{Section Conclusion}

We have formally derived a neural-field equation from a high-dimensional system of
neurons on a disordered network. Our aim in this paper has been to strike a balance
between biophysical accuracy (insofar as the equations are rigorously proved from a
microscopic neural network model) and mathematical tractability (we obtain Gaussian
limiting equations, which lead to limiting equations with no spatial derivatives and
a very similar structure to the classical Wilson Cowan equations
\cite{wilsonExcitatoryInhibitoryInteractions1972}). We were able to determine a range
of interesting bifurcations, including local `bump' excitations of neural activity,
and spatially structured spiral waves.

There are numerous directions for future research. We wish to more
  fully explore the dynamics of the limiting equations in Lemma \ref{Lemma Limiting
  Equations}. After the variance has relaxed to equilibrium, these equations have the
  form of the classical Wilson-Cowan equations, except with a different firing-rate
  function. One thus expects that many of the patterns that have already been found
  in neural field equations
  \cite{ermentroutSpatiotemporalPatternFormation2014,coombesNeuralFields2014,cook2022neural} can
  also be found for the equations in Lemma \ref{Lemma Limiting Equations}. A
  significant advantage of our equations is that one has a more concrete
  understandiung of how exactly they arise out of a particle model.

Another promising avenue is to extend these results to include delays due to synaptic
processing / transmission. One expects this to yield fundamentally different limiting
equations, since we have $O(n \varphi_n)$ extra synaptic variables.

Finally, for parameterizations involving two or more attractors, we will leverage the
Large Deviations result to compute the most likely transition paths between
attractors. As we briefly surveyed in the Introduction, it is widely conjectured in
the Theoretical Neuroscience community that noise-induced transitions in large
ensembles of neurons occur regularly in the brain (in scenarios such as binocular
rivalry \cite{moreno2007noise} and UP /DOWN transitions). For even moderately large ensembles of neurons $n
> O(100)$ a direct computation of (say) an expected transition time from one
attractor to another is completely computationally intractable. Since the Large
Deviations rate function gives an asymptotic estimate for the probability of a
particular transition pathway, it provides a computationally-tractable means of
estimating the likelihood of a transition. Let us note that using the Large
Deviations theory to estimate most likely transition pathway will likely still be
very computationally demanding, since it requires one to minimize an
infinite-dimensional function. However the key difference is that the Large
Deviations computational complexity does not diverge with $n$.

\appendix

\section{Proof of \cref{lem:contraction}}\label{sec:appendixProofContraction}
\begin{proof} Fix $x \in D$, set $V_x(t) = V(x,t)$, $G_x = G(x)G^T(x)$, and rewrite
  the second and fourth equation in \cref{eq:meanField} as the following ODE on
  $\RSet^{q \times q}$
\[
  \frac{d}{dt} V_x = -(L V_x + V_x L^T) + G_x G_x^T, \qquad V_x(0) = V_{x,0},
\]
whose equilibria satisfy a Sylvester equation. The dynamical system can be
transplanted on $\RSet^{q^2}$ using the vectorisation operator, setting $v_x = \vect V_x$,
$g_x = \vect (G_x G_x^T)$, and $M = I_q \otimes L + L^T
\otimes I_q$, where $I_q$ is the identity in $\RSet^{q \times q}$, leading to
\begin{equation}\label{eq:vxEquation}
  \frac{d}{dt} v_x = -M v_x + g_x, \qquad v_x(0) = v_{x,0}.
\end{equation}
We note that 
\[
\sigma(M) = \{ \lambda + \rho : \lambda, \rho \in \sigma(L)  \}, \qquad 
\max_{\lambda \in \sigma(M)} \real \lambda < - 2\gamma,
\]
hence from standard ODE theory it follows that: (i) There exists a unique equilibrium
$v_x^*$ to \cref{eq:vxEquation}. (ii) The unique solution to \cref{eq:vxEquation} is
given by
    \[
      v_x(t) = e^{-tM}v_{x,0} + \int_{0}^{t}e^{-(t-s)M} g_x \,ds.
    \]
    (iii) By \cite[Definition 3.12--Theorem 3.14]{engel2000a}
    there exist $\beta, \epsi >0$ such that
    \[
      \| v_x(t) - v_x^* \|_{\RSet^{q^2}} \leq \beta e^{-\epsi t} \| v_{x,0} - v_x^*
      \|_{\RSet^{q^2}}.
    \]
The result now follows from recalling that $\| V_x(t) \|_{F} = \| v_x(t)
\|_{\RSet^{q^2}}$ and noting that, since the mappings $x \mapsto G_x$ and $x \mapsto
V_0(x)$ are in $C(D,\RSet^{q \times q})$, then so is $x \mapsto v_x(t)$ for any $t
\in \RSet_{\geq 0}$.
\end{proof}

\section{Supplementary Material} \label{sec:supplementary}
In the Supplementary Materials we prove some additional results, and include further
numerical simulations. Recall the $n$-dimensional system with averaged
connectivity,
\begin{equation} \label{eq: v processes supplementary}
  \begin{aligned}
    & dv^j_{\alpha,t} = \bigg( - \sum_{\beta =1}^{q}  L_{\alpha\beta} v^j_{\beta,t} 
    &\begin{aligned}[t]
      &+ n^{-1} \sum_{k=1}^n \sum_{\beta =1}^{q} 
        \mathcal{K}_{\alpha\beta}(x_n^j,x_n^k) f_{\beta}( v^k_t) 
      + I_{\alpha,t}(x_n^j) \bigg) dt \\
      &+ \sum_{\beta = 1}^qG_{\alpha\beta,t}(x_n^j) dW^j_{\beta,t}, 
      \end{aligned}\\
    & v^i_\alpha  = z_\alpha^i
  \end{aligned}
\end{equation}
which shares with the original particle system identical Brownian
motions and initial conditions. Our first result concerns the push-forward function $\Phi_T$ referred to in Section 6.

\subsection{Transformation of the Uncoupled System to the Coupled System}
\label{Section Transformation}

Consider the empirical measure generated by \cref{eq: v processes supplementary}, that is, the
particle system with average coupling
\[
  \grave{\mu}^n_T = n^{-1}\sum_{j=1}^n \delta_{x^j_n , v^j} .
\]
Our first main aim is to prove Lemma 6.2. We must thus find a continuous mapping $\Phi_T: X_T \to Y_T$,  such that $\grave{\mu}^n_T =
\Phi_T\big( \tilde{\mu}^n_T \big)$ identically, with $\tilde \mu^n_T$ given by
\begin{equation}\label{eq:muTildeDefSupp}
  \tilde{\mu}^n_T = n^{-1} \sum_{j \in \NSet_n} \delta_{b^j}
  \quad 
  b^j = \bigl(x_n^j,u_0^j,\{ \tilde W^j_{t} \colon t \in [0,T] \}\bigr),
  \quad
  \tilde W^j_{\alpha,t} = \sum_{\beta=1}^q \int_0^t G_{\alpha\beta,s}(x_n^j) dW_{\beta,s}^j
\end{equation}

 This allows us to `push-forward'
the Large Deviations Principle for the uncoupled system (as noted in 
Section 6.1 of the main paper) to obtain a Large Deviations
Principle for the coupled system. To the knowledge of these authors, the first
scholar to apply this technique to the Large Deviations of interacting particle
systems was Tanaka \cite{tanaka1984limit}. One of these authors has used this
technique to determine the Large Deviations of a spatially-distributed network of
interacting neurons in \cite{maclaurin2024large}.

The mapping $\Phi_T$ is defined in two steps, using an intermediate mapping $\psi_T$
that we are now going to discuss. The mapping $\psi_T$ can be thought of as as
transforming the characteristics of the noise empirical measure $\tilde{\mu}^n_T$ to
the characteristics of $\grave{\mu}^n_T$. 

Define
\[
   \begin{aligned}
     \psi_T : Y_T \times D \times \mathbb{R}^q \times C\big( [ 0,T], \mathbb{R}^q \big) 
       & \to  D \times C\big( [ 0,T], \mathbb{R}^q \big) \\
       (\nu,x,z,w) 
       & \mapsto (x,v),
   \end{aligned}
\]
where $v \in  C\big( [ 0,T], \mathbb{R}^q \big)$ is defined to be such that for all
$(\alpha,t) \in \NSet_q  \times  [0,T]$
\[
  v_{\alpha,t} = z_{\alpha} + \int_0^t  \bigg( - 
    \sum_{\beta \in \NSet_q}  L_{\alpha\beta} v_{\beta,s} 
  + \sum_{\beta \in \NSet_q} \mathbb{E}^{(y,u)\sim \nu}\big[ \mathcal{K}_{\alpha\beta}(x,y) f_{\beta}(u_s) \big]+ I_{\alpha}(s,x) \bigg) ds + w_{\alpha,t}.
\]
 \begin{lemma}
The transformation $\psi_T $ is well-defined. Furthermore $\psi_T$ is continuous.
\end{lemma}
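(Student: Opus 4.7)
My plan is to establish well-definedness and continuity of $\psi_T$ in separate steps. The key structural observation is that the defining relation for $v$ is \emph{linear} in $v$: the nonlinearity $f$ is applied to the dummy variable $u_s$ inside the expectation against the fixed measure $\nu$, so the only $v$-dependence on the right-hand side is the linear drift $-Lv$. Setting
\begin{equation*}
  F_\alpha(s; x, \nu) = \sum_{\beta \in \NSet_q} \mean^{(y,u) \sim \nu}\bigl[ \calK_{\alpha\beta}(x, y) f_\beta(u_s) \bigr] + I_\alpha(s, x),
\end{equation*}
the integral equation reads $v_t = z + w_t + \int_0^t (-L v_s + F(s; x, \nu))\, ds$. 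I would first check that $s \mapsto F(s; x, \nu)$ is continuous on $[0,T]$: continuity of $I(\cdot, x)$ is built into Hypothesis \ref{hyp:functions}, while continuity of the expectation follows from dominated convergence applied to the bounded continuous integrand $(y, u) \mapsto \calK_{\alpha\beta}(x, y) f_\beta(u_s)$. Well-definedness is then immediate: substituting $\tilde v_t = v_t - w_t$ gives an ODE with continuous right-hand side $-L \tilde v_t - L w_t + F(t; x, \nu)$ and initial datum $\tilde v_0 = z$, which admits a unique solution in $C^1([0,T], \RSet^q)$ by classical linear ODE theory, and hence $v = \tilde v + w \in C([0,T], \RSet^q)$ is uniquely determined.

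For continuity, I would take a convergent sequence $(\nu_n, x_n, z_n, w_n) \to (\nu, x, z, w)$, let $v_n$ and $v$ denote the corresponding solutions, and subtract the integral equations. Bounding the contribution of $-L$ by $\| L \|_F$ and applying Gronwall's inequality yields
\begin{equation*}
  \| v_n - v \|_T \leq e^{\| L \|_F T}\Bigl( \| z_n - z \|_{\RSet^q} + \| w_n - w \|_T + \int_0^T \bigl\| F(s; x_n, \nu_n) - F(s; x, \nu) \bigr\|_{\RSet^q}\, ds \Bigr).
\end{equation*}
It therefore suffices to show that the forcing difference tends to zero in $L^1([0,T])$. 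I would split it into three pieces: an external input piece $I(s, x_n) - I(s, x)$, a kernel-shift piece in which $\calK(x, \cdot)$ is replaced by $\calK(x_n, \cdot)$ while $\nu_n$ is kept, and a measure-shift piece in which $\nu_n$ is replaced by $\nu$ with $x$ fixed. The input difference goes to zero uniformly in $s$ by uniform continuity of $I$ on the compact set $[0, T] \times D$, and the kernel-shift piece likewise goes to zero uniformly in $s$ by uniform continuity of $\calK$ on $D \times D$ combined with the uniform bound on $f$.

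The main obstacle is the measure-shift piece. For each fixed $s \in [0,T]$, the map $(y, u) \mapsto \calK_{\alpha\beta}(x, y) f_\beta(u_s)$ is a bounded continuous function on the Polish space $D \times C([0,T], \RSet^q)$: $y \mapsto \calK_{\alpha\beta}(x, y)$ is continuous by Hypothesis \ref{hyp:functions}, the evaluation map $u \mapsto u_s$ is continuous with respect to the sup norm, and $f_\beta$ is continuous. Since $d_{Y_T}(\nu_n, \nu) \to 0$ implies weak convergence $\nu_n \Rightarrow \nu$, the expectation converges pointwise in $s$. The uniform bound $\| \calK \|_\infty \| f \|_\infty$, finite by continuity of $\calK$ on the compact set $D \times D$ and boundedness of $f$, then allows dominated convergence in $s$ to upgrade pointwise convergence to $L^1([0,T])$ convergence, which combined with the Gronwall estimate above yields $\| v_n - v \|_T \to 0$ and establishes continuity of $\psi_T$.
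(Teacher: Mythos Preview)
Your proof is correct, and in fact more careful than the paper's in some respects, but it proceeds differently in both halves.

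For well-definedness, you exploit the key structural fact that the integral equation is \emph{linear} in $v$ (the nonlinearity $f$ acts only on the dummy variable $u$ inside the expectation), reduce to a linear ODE via the substitution $\tilde v = v - w$, and invoke classical existence and uniqueness. The paper instead sets up a Picard map $\Lambda$ and shows it is a contraction in a weighted sup-norm $\|v\|_\rho = \sup_t e^{-\rho t}\|v_t\|$ for $\rho > \|L\|$. Your route is shorter and more direct here; the paper's fixed-point scheme has the advantage that it would survive a nonlinear local drift without change, but for the linear $L$ actually assumed your argument is cleaner.

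For continuity, you take a sequence, subtract, Gronwall, and then split the forcing error into input, kernel-shift, and measure-shift pieces; the measure-shift piece is handled by weak convergence (implied by Wasserstein convergence) against the bounded continuous test function $(y,u)\mapsto \calK_{\alpha\beta}(x,y)f_\beta(u_s)$, upgraded to $L^1$ in $s$ by dominated convergence. The paper instead asserts directly that $(y,u)\mapsto \calK_{\alpha\beta}(x,y)f_\beta(u)$ is Lipschitz and bounded, and from this deduces a quantitative Lipschitz bound $d_{Y_t}(\mu,\tilde\mu)$ on the convolution (equation \eqref{eq: Lipschitz convolution}), via Kantorovich--Rubinstein duality. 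Your approach uses only continuity of $\calK$, which is all that \cref{hyp:functions} actually states; the paper's Lipschitz claim on $\calK$ is not justified by the hypotheses as written. On the other hand, the paper's quantitative estimate \eqref{eq: Lipschitz convolution} is reused in the subsequent proof that $\Phi_T$ is Lipschitz, so if you continue with your purely qualitative continuity you will need an alternative argument (or an added Lipschitz assumption on $\calK$) at that later step.
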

\begin{proof}

  To prove well-definedness, fix $(\nu,x,z, w) \in Y_{T} \times D \times \mathbb{R}^q
  \times C( [ 0,T], \mathbb{R}^q)$, and consider the map $\Lambda \colon C( [ 0,T],
  \mathbb{R}^q ) \to C( [ 0,T],
  \mathbb{R}^q )$ defined by
\begin{multline*}
  \Lambda(r)_{\alpha,t} := z_{\alpha} +  \int_0^t  \bigg( - \sum_{\beta \in \NSet_q}
  L_{\alpha\beta} r_{\beta,s} + \sum_{\beta \in  \in \NSet_q} \int \mathcal{K}_{\alpha\beta}(x,y) f_{\beta}(u_s)\nu(dy,du) \\ 
+ I_{\alpha}(s,x) \bigg) ds + w_{\alpha,t}.
\end{multline*}
We claim that $\Lambda$ has a unique fixed point $v$. If this holds, then the fixed
point $v$
satisfies $\psi_T(\nu,x,z,w) = (x,v)$, which means $\psi_T$ is a
well-defined operator on $ Y_T \times D \times \mathbb{R}^q \times C( [ 0,T],
\mathbb{R}^q)$ to $\RSet^q  \times C([0,T],\RSet^q)$.
To prove that $\Lambda$ has a unique fixed point, we introduce the norm 
\[
  \| v \|_{\rho} = \sup_{t \in [0,T]} e^{-\rho t} \| v_t \|_{\RSet^q},
\]
which is equivalent to $\| \blank \|_{T}$ on $C([0,T],\RSet^q)$ for any
$\rho >0$, because $e^{-\rho T}\| v \|_T  \leq \| v \|_{\rho}  \leq \| v \|_{T}$. We
prove that $\Lambda$ is a contraction on
$( C( [ 0,\tau], \mathbb{R}^q), \| \blank \|_\rho )$, and hence on
$( C( [ 0,\tau], \mathbb{R}^q), \| \blank \|_T )$, for any $\rho > \| L \|$ where
the latter is the operator norm of $L$, seen as an operator on $\RSet^q$ to $\RSet^q$. For any
$p,r \in ( C( [ 0,\tau], \mathbb{R}^q ), \| \blank \|_\rho )$, we estimate 
\[
  \begin{aligned}
    \| \Lambda(r) - \Lambda(p)\|_\rho  
    & \leq \sup_{t \in [0,T]} e^{-\rho t} \int_{0}^{t} e^{\rho s} 
      \| e^{-\rho s} L(r_s - p_s) \|_{\RSet^q}\,d s \\
    & \leq \| L \| \; \| r - p \|_{\rho} \sup_{t \in [0,T]} e^{-\rho t}  
      \int_{0}^{t} e^{\rho s}\,d s \\
    & = \| L \| \; \| r - p \|_{\rho} \sup_{t \in [0,T]} \frac{1-e^{-\rho t}}{\rho}
     \leq  \frac{\| L \|}{\rho} \| r - p \|_{\rho}
  \end{aligned}
\]
which proves that $\Lambda$ is a contraction on $( C( [ 0,T], \mathbb{R}^q), \|
\blank \|_\rho )$, for any $\rho > \|  L \|$. This proves $\Lambda$ has a unique
fixed point in $( C( [ 0,T], \mathbb{R}^q ), \| \blank \|_T)$.

Notice that the function $(y,v) \to \mathcal{K}_{\alpha\beta}(x,y)f_{\beta}(v)$
is Lipschitz and bounded, for all $x \in D$. We thus find that there must exist a universal constant $C_2$ such that for all 
$\mu,\tilde{\mu} \in Y_{t}$, for any $x\in D$,
\begin{multline}\label{eq: Lipschitz convolution}
\bigg| \int_0^t\sum_{\beta \in \NSet^q} \int \mathcal{K}_{\alpha\beta}(x,y) f_{\beta}(u_s)\mu(dy,du) - \\ \int_0^t\sum_{\beta \in \NSet^q} \int \mathcal{K}_{\alpha\beta}(x,y) f_{\beta}(u_s)\tilde{\mu}(dy,du) \bigg|   \leq C_2 d_{Y_t}(\mu,\tilde{\mu})
\end{multline}
Since the drift term in is uniformly Lipschitz in $v$, standard techniques imply that $\psi_T$ is continuous.

\end{proof}

We next define $\Phi_T:  X_{T} \to Y_T$ to be such that
\begin{align}\label{eq: Phi T definition}
\Phi_T(\mu) := \mu \circ \psi_T( \Phi_T(\mu) ,\cdot,\cdot,\cdot)^{-1}.
\end{align}
In words, $\Phi_T(\mu)$ is the push-forward of $\mu$ by the characteristic map $\psi_T$. 
The transformation $\Phi_T$ is useful thanks to the following result.
\begin{lemma} \label{Lemma Phi T lifts}
With unit probability,
\begin{align}\label{eq: Phi T lifts in Lemma}
\grave{\mu}^{n}_T =\Phi_T\big( \tilde{\mu}^{n}_T \big),
\end{align}
\end{lemma}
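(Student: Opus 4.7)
The plan is to verify two things in order: first, that for each $\mu \in X_T$ the measure $\Phi_T(\mu)$ is well-defined as the unique fixed point in $Y_T$ of the map $\Theta_\mu(\nu) := \mu \circ \psi_T(\nu,\cdot,\cdot,\cdot)^{-1}$; and second, that when $\mu = \tilde{\mu}^n_T$, the empirical measure $\grave{\mu}^n_T$ is that fixed point. These two facts together give \eqref{eq: Phi T lifts in Lemma}.

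For well-definedness, I would mimic the exponentially weighted contraction argument used in the preceding lemma establishing continuity of $\psi_T$. Given two candidates $\nu,\tilde{\nu} \in Y_T$, use the trivial identity coupling of $\mu$ to itself to estimate
\[
d_{Y_T}\bigl(\Theta_\mu(\nu), \Theta_\mu(\tilde{\nu})\bigr) \leq \mathbb{E}^{(x,z,w)\sim\mu}\bigl[ \| \psi_T(\nu,x,z,w) - \psi_T(\tilde{\nu},x,z,w) \|_{S_T} \bigr].
\]
The first components agree, and the difference of the second components satisfies an integral inequality whose drift difference is controlled by $\| L \|$ on the past of the path and by $d_{Y_s}(\nu,\tilde{\nu})$ through the Lipschitz bound \eqref{eq: Lipschitz convolution}. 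Working in the norm $\| v\|_\rho = \sup_{t\leq T} e^{-\rho t}\| v_t\|_{\mathbb{R}^q}$ with $\rho$ chosen strictly larger than the resulting Lipschitz constant, exactly as in the proof of well-definedness of $\psi_T$, makes $\Theta_\mu$ a strict contraction on $Y_T$. The unique fixed point is $\Phi_T(\mu)$.

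The second step is essentially unpacking of definitions. Take $\mu = \tilde{\mu}^n_T$ and compute
\[
\Theta_{\tilde{\mu}^n_T}(\grave{\mu}^n_T) = \tilde{\mu}^n_T \circ \psi_T(\grave{\mu}^n_T,\cdot,\cdot,\cdot)^{-1} = \frac{1}{n}\sum_{j\in\NSet_n} \delta_{\psi_T(\grave{\mu}^n_T,\,x^j,\,u^j_0,\,\tilde{W}^j)}.
\]
Writing $\psi_T(\grave{\mu}^n_T, x^j, u^j_0, \tilde{W}^j) = (x^j, \bar{v}^j)$, the definition of $\psi_T$ and expansion of the expectation against the empirical measure give, for each $(\alpha,t) \in \NSet_q \times [0,T]$,
\[
\bar{v}^j_{\alpha,t} = z^j_\alpha + \int_0^t \bigg( -\sum_{\beta} L_{\alpha\beta}\bar{v}^j_{\beta,s} + \frac{1}{n}\sum_{k,\beta}\mathcal{K}_{\alpha\beta}(x^j,x^k)f_\beta(v^k_s) + I_\alpha(s,x^j)\bigg)\,ds + \tilde{W}^j_{\alpha,t}.
\]
Since $\tilde{W}^j_{\alpha,t} = \int_0^t \sum_\beta G_{\alpha\beta}(s,x^j)\,dW^j_{\beta,s}$, this is exactly the SDE \eqref{eq: v processes} satisfied by $v^j$. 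By the pathwise uniqueness of \eqref{eq: v processes} (Lipschitz drift, bounded diffusion), we have $\bar{v}^j = v^j$ almost surely for every $j$, so $\Theta_{\tilde{\mu}^n_T}(\grave{\mu}^n_T) = \grave{\mu}^n_T$ almost surely. Uniqueness of the fixed point from the first step then yields $\Phi_T(\tilde{\mu}^n_T) = \grave{\mu}^n_T$ almost surely.

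The only genuine difficulty is the first step, namely checking that the implicit equation $\nu = \mu \circ \psi_T(\nu,\cdot,\cdot,\cdot)^{-1}$ determines $\nu$ uniquely — once that is in hand, verifying the identity reduces to matching the integral equation produced by $\psi_T$ with the averaged-coupling SDE \eqref{eq: v processes} and invoking standard pathwise uniqueness.
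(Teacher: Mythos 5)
Your proof is correct, and it takes the same approach the paper intends: the paper's own proof is the single line ``This is almost immediate from the definitions,'' and you supply precisely the verification being elided. Your second step is the heart of the matter: pushing the empirical measure $\tilde{\mu}^n_T$ forward under $\psi_T(\grave{\mu}^n_T,\cdot,\cdot,\cdot)$ gives an empirical measure of paths $\bar{v}^j$ satisfying the same linear-plus-fixed-drift integral equation (with the same initial data $z^j$ and the same noise path $\tilde{W}^j$) as the averaged process $v^j$ from \cref{eq: v processes}, and pathwise uniqueness then forces $\bar{v}^j=v^j$, so $\grave{\mu}^n_T$ is a fixed point of $\Theta_{\tilde{\mu}^n_T}$. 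One small organizational remark: the well-definedness and uniqueness of the fixed point defining $\Phi_T$, which you prove as your ``first step,'' is established in the paper in the \emph{subsequent} lemma rather than before this one; your ordering is cleaner since it ensures ``$\Phi_T(\tilde{\mu}^n_T)$'' is an unambiguous object before asserting it equals $\grave{\mu}^n_T$, but mathematically there is no difference in content.
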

\begin{proof}
This is almost immediate from the definitions.
\end{proof}

\begin{lemma}
For any $\mu \in X_T$ there exists a unique solution $\Phi_T(\mu) \in X_T$ to the fixed point identity \cref{eq: Phi T definition}. Furthermore the associated mapping $\mu \mapsto \Phi_T(\mu)$ is continuous over $X_{T}$.
\end{lemma}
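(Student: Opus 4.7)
The plan is to establish existence and uniqueness of the fixed point via a Picard iteration in $Y_T$, and then deduce continuity of $\Phi_T$ from a coupling-and-Grönwall argument that reuses the same estimates.

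Fix $\mu \in X_T$ and define $\Theta_\mu \colon Y_T \to Y_T$ by $\Theta_\mu(\nu) = \mu \circ \psi_T(\nu,\cdot,\cdot,\cdot)^{-1}$. Starting from any $\nu^{(0)} \in Y_T$, set $\nu^{(k+1)} = \Theta_\mu(\nu^{(k)})$. Successive iterates admit a canonical coupling through the shared sample $(x,z,w) \sim \mu$: under this coupling the two trajectories are the unique fixed points $v[\nu^{(k)};x,z,w]$ and $v[\nu^{(k-1)};x,z,w]$ of the map $\Lambda$ constructed in the preceding lemma. Subtracting the defining equations for these two trajectories and using the Lipschitz bound \cref{eq: Lipschitz convolution} together with the operator bound on $L$ yields, for each such triple,
\begin{equation*}
\| v[\nu^{(k)}](t) - v[\nu^{(k-1)}](t)\| \leq \|L\|\int_0^t \| v[\nu^{(k)}](s) - v[\nu^{(k-1)}](s)\|\,ds + C\int_0^t d_{Y_s}(\nu^{(k)},\nu^{(k-1)})\,ds.
\end{equation*}
Grönwall's inequality bounds the left-hand side by a constant multiple of $\int_0^t d_{Y_s}(\nu^{(k)},\nu^{(k-1)})\,ds$; taking expectations under $\mu$ and using the canonical coupling produces the Picard recursion $d_{Y_t}(\nu^{(k+1)},\nu^{(k)}) \leq C_T \int_0^t d_{Y_s}(\nu^{(k)},\nu^{(k-1)})\,ds$. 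Iterating gives $d_{Y_T}(\nu^{(k+1)},\nu^{(k)}) \leq (C_T T)^k/k! \, d_{Y_T}(\nu^{(1)},\nu^{(0)})$, so the sequence is Cauchy in the complete metric space $Y_T$ and its limit $\nu$ is a fixed point of $\Theta_\mu$ by continuity. Uniqueness is immediate from the same estimate: if $\nu, \tilde\nu$ are both fixed points, then $d_{Y_t}(\nu,\tilde\nu) \leq C_T\int_0^t d_{Y_s}(\nu,\tilde\nu)\,ds$ forces $\nu = \tilde\nu$ by Grönwall. The integrability condition $\nu \in Y_T$ (finite first moment on $S_T$) follows from $\mu \in X_T$ via a direct Grönwall bound on $\| v[\nu;x,z,w]\|_T$ in terms of $\|z\|$, $\|w\|_T$, and the uniform bounds on $f$ and $I$.

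For continuity of $\Phi_T$, suppose $\mu^{(n)} \to \mu$ in $X_T$ and set $\nu^{(n)} = \Phi_T(\mu^{(n)})$, $\nu = \Phi_T(\mu)$. Choose an optimal coupling $\xi_n \in \Gamma(\mu^{(n)},\mu)$. Pushing $\xi_n$ forward through the pair of maps $\bigl((x_n,z_n,w_n),(x,z,w)\bigr) \mapsto \bigl((x_n,v[\nu^{(n)};x_n,z_n,w_n]),(x,v[\nu;x,z,w])\bigr)$ yields a coupling of $\nu^{(n)}$ and $\nu$. Splitting the difference into a change-of-measure piece (from $\mu^{(n)}$ to $\mu$) and a change-of-parameter piece (from $\nu^{(n)}$ to $\nu$), and using the joint Lipschitz continuity of $\psi_T$ in $(x,z,w)$ inherited from the preceding lemma together with \cref{eq: Lipschitz convolution}, we obtain the integral inequality
\begin{equation*}
d_{Y_T}(\nu^{(n)},\nu) \leq C\, d_{X_T}(\mu^{(n)},\mu) + C\int_0^T d_{Y_s}(\nu^{(n)},\nu)\,ds.
\end{equation*}
Grönwall then yields $d_{Y_T}(\nu^{(n)},\nu) \leq C e^{CT}\, d_{X_T}(\mu^{(n)},\mu) \to 0$.

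The main obstacle is the self-referential nature of the identity \cref{eq: Phi T definition}: $\Theta_\mu$ depends on $\nu$ both as the target of the push-forward and as a parameter inside the nonlocal interaction term $\mathbb{E}^{(y,u)\sim\nu}[\calK(x,y)f(u_s)]$. This feedback prevents a one-shot contraction in the Wasserstein metric on $Y_T$ and forces the time-integrated Picard/Grönwall argument above; the same time-integrated estimate is precisely what makes the continuity argument go through. A secondary technicality is verifying the first-moment integrability needed to place $\Phi_T(\mu)$ in $Y_T$ rather than merely in $\calP(S_T)$, which is handled by the direct Grönwall bound on $v[\nu;x,z,w]$ noted above.
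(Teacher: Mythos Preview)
Your proof is correct and the continuity argument is essentially identical to the paper's: both push an $\epsilon$-optimal coupling of $\mu$ and $\tilde\mu$ through the characteristic map, split the trajectory difference into a piece controlled by $d_{X_T}(\mu,\tilde\mu)$ and a piece controlled by $d_{Y_s}(\Phi_s(\mu),\Phi_s(\tilde\mu))$, and close with Gr\"onwall.

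For existence and uniqueness you take a slightly different but equivalent route. The paper shows that $\Gamma_{\mu,t}(\gamma):=\mu\circ\psi_t(\gamma,\cdot,\cdot,\cdot)^{-1}$ satisfies $d_{Y_t}(\Gamma_{\mu,t}(\gamma),\Gamma_{\mu,t}(\tilde\gamma))\le Ct\,d_{Y_t}(\gamma,\tilde\gamma)$, obtains a contraction on a short time interval, and then iterates forward in time by restricting to measures that agree with the partial fixed point. You instead run a global Picard iteration and extract the factorial decay $(C_TT)^k/k!$ from the integral recursion $d_{Y_t}(\nu^{(k+1)},\nu^{(k)})\le C_T\int_0^t d_{Y_s}(\nu^{(k)},\nu^{(k-1)})\,ds$. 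Both arguments rest on the same underlying estimate---the Lipschitz dependence of the interaction term on the measure parameter, \cref{eq: Lipschitz convolution}---so this is a packaging difference rather than a genuinely new idea; your version has the minor advantage of avoiding the time-stepping bookkeeping.
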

\begin{proof}
Fix $\mu \in X_{T}$. 
For any $\gamma \in Y_{T}$, define $\Gamma_{\mu,T}(\gamma) := \mu \circ \psi_T( \gamma ,\cdot,\cdot,\cdot)^{-1} \in Y_T$. 
One easily checks that for any $\gamma,\tilde{\gamma} \in Y_{T}$, there is a constant $C > 0$ such that
\begin{align}
d_{Y_t}\big( \Gamma_{\mu,t}(\gamma) , \Gamma_{\mu,t}(\tilde{\gamma}) \big) \leq Ct d_{Y_t}\big( \gamma , \tilde{\gamma} \big).
\end{align}
Hence for small enough $t$, $\Gamma_{\mu,t}$ is a contraction and there is a unique fixed point solution $\mu_{*,t}$ to \cref{eq: Phi T definition}. 
 
Next, define the space $\tilde{Y}_t$ to consist of all measures $\nu \in Y_T$ such that the law of the variables upto time $t$ is identical to $\mu_{*,t}$. For $\gamma,\tilde{\gamma} \in \tilde{Y}_t$  we find that for $s \geq t$,
\begin{align}
d_{Y_s}\big( \Gamma_s(\gamma) , \Gamma_s(\tilde{\gamma}) \big) \leq C(s-t) d_{Y_s}\big( \gamma , \tilde{\gamma} \big).
\end{align}
Furthermore the constant $C$ depends on $T$ only. Hence $\Gamma$ is a contraction for small enough $s-t$, and we obtain a unique fixed point in $\tilde{Y}_t$. Iterating this argument, we obtain a unique fixed point $\Phi_T(\mu)$ upto time $T$.

For the continuity, let $\mu,\tilde{\mu} \in X_{T}$, and let $\xi_{\epsilon}$ be a measure that is within $\epsilon \ll 1$ of realizing the infimum in the definition of the Wasserstein metric. That is, we write $\xi_{\epsilon}$ to be the law of coupled random variables $(x,u_0,w) , (\tilde{x},\tilde{u}_0,\tilde{w}) \in D \times \mathbb{R}^q \times C([0,T],\mathbb{R}^q) $, and $\xi_{\epsilon}$ is such that
\begin{align}
\mathbb{E}^{\xi_{\epsilon}}\big[ \| x- \tilde{x} \| + \| u_0 - \tilde{u}_0 \| + \| w - \tilde{w} \|_T \big] \leq \epsilon.
\end{align} 
Write $\psi_T\big( \Phi_T(\mu) , x , u_0 ,  w \big) := (x,u_{[0,T]})$ and $\psi_T\big( \Phi_T(\tilde{\mu}) , \tilde{x} , \tilde{u}_0 , \tilde{w} \big) := (\tilde{x},\tilde{u}_{[0,T]})$. Substituting definitions, and employing the Lipschitz property in \eqref{eq: Lipschitz convolution}, we find that there is a constant $C > 0$ (chosen independently of $T$) such that
\begin{multline}
\sup_{t\leq T}\sup_{1\leq \alpha \leq q}\big| u_{\alpha}(t) - \tilde{u}_{\alpha}(t) \big| \leq \sup_{1\leq \alpha \leq q}\big| u_{\alpha}(0) - \tilde{u}_{\alpha}(0) \big| \\+ C \int_0^T \big\lbrace  \sup_{t\leq T}\sup_{1\leq \alpha \leq q}\big| u_{\alpha}(t) - \tilde{u}_{\alpha}(t) \big| + d_{Y_t}\big(\Phi_t(\mu) , \Phi_t(\tilde{\mu}) \big) \\
 + \norm{ x-\tilde{x}} \big\rbrace dt + \sup_{t\leq T}\sup_{1\leq \alpha \leq q}\big| w_{\alpha}(t) - \tilde{w}_{\alpha}(t) \big| .
\end{multline}
Taking expectations of both sides with respect to $\xi_{\epsilon}$, and writing
\begin{align}
y_s = \mathbb{E}\big[ \sup_{t\leq s} \norm{ u_{\alpha}(t) - \tilde{u}_{\alpha}(t) } + \norm{x - \tilde{x}}\big],
\end{align}
we obtain that for all $t\geq 0$,
\begin{align}
y_t \leq \epsilon +  C \int_0^t\big\lbrace y_s +d_{Y_s}\big(\Phi_s(\mu) , \Phi_s(\tilde{\mu}) \big) +  d_{X_T}(\mu,\tilde{\mu}) + \epsilon  \big\rbrace ds +  d_{X_T}(\mu,\tilde{\mu}) 
\end{align}
since
\[
\mathbb{E}^{\xi_{\epsilon}} \big[ \sup_{t\leq T}\sup_{1\leq \alpha \leq q}\big| w_{\alpha}(t) - \tilde{w}_{\alpha}(t) \big| \big] \leq \epsilon + d_{X_T}(\mu,\tilde{\mu}).
\]
Note that, by definition of the Wasserstein Metric,
\[
d_{Y_s}\big(\Phi_s(\mu) , \Phi_s(\tilde{\mu}) \big) \leq y_s .
\]
Taking $\epsilon \to 0^+$, we thus find that
\[
d_{Y_t}\big(\Phi_t(\mu) , \Phi_t(\tilde{\mu}) \big) \leq C\int_0^t \big( 2 d_{Y_s}\big(\Phi_s(\mu) , \Phi_s(\tilde{\mu}) \big) + d_{X_T}(\mu,\tilde{\mu}) \big) ds + d_{X_T}(\mu,\tilde{\mu}).
\]
An application of Gronwall's Inequality then implies that there exists a constant $\tilde{C}_T$ such that
\begin{align}
d_{Y_T}\big(\Phi_T(\mu) , \Phi_T(\tilde{\mu}) \big) \leq \tilde{C}_T d_{X_T}(\mu,\tilde{\mu}).
\end{align}
Thus $\Phi_T$ is Lipschitz (and also continuous).
\end{proof}

%
%

\subsection{Bounding the Original Particle System}

Our main results only concern the convergence of empirical averages of bounded
continuous functions. In fact it is possible to show that the empirical average of
certain unbounded continuous functions also converges. This is particularly desirable
for our Gaussian Application, because Gaussian distributions are most conveniently
described in terms of their first and second moments. The main result of this section
is the following Corollary to Theorem 3.9.

\begin{corollary}
Let $h: \mathbb{R}^q \mapsto \mathbb{R}$ be continuous and such that
\[
 \| h(z) \| \leq \rm{Const} \| z \|^2.
\]
Then for any $t\leq T$, any continuous function $g: D \mapsto \mathbb{R}$, it holds that
\begin{align}
\lim_{n\mapsto \infty} \bigg| n^{-1} \sum_{j\in \mathbb{N}_n}g(x_n^j) h(u^j_t) - \mathbb{E}^{(x,u) \sim \bar{\mu}_t}\big[ g(x) h(u) \big] \bigg| = 0.
\end{align}
\end{corollary}
\begin{proof}
For any $c > 0$, define $h_c: \mathbb{R}^q \mapsto \mathbb{R}$ to be such that   
\begin{align}
h_{c}(u) = h( c u / \norm{u} ) \text{ in the case that }\norm{u} \geq c
\end{align}
and if $\norm{u} < c$, define $h_c(u) = h(u)$. Notice that $h_c$ is continuous and bounded, and also that
\begin{equation} \label{eq: Gaussian h c tail}
\lim_{c\to\infty} \sup_{t\leq T} \mathbb{E}^{(x,u) \sim \bar{\mu}_t}\big[ g(x) \big( h(u) - h_c(u) \big)\big]  = 0.
\end{equation}
Since $h_c$ is continuous and bounded, Theorem 1 implies that
\begin{align} \label{eq: convergence h c }
\lim_{n\to \infty} \bigg| n^{-1} \sum_{j\in I_n}g(x_n^j) h_c(u^j_t) - \mathbb{E}^{(x,u) \sim \bar{\mu}_t}\big[ g(x) h_c(u) \big] \bigg| = 0.
\end{align}
We therefore wish to show that
\begin{equation} \label{eq: intermediate second moment corollary}
\lim_{c\to\infty} \lim_{n\to \infty} \bigg| n^{-1} \sum_{j\in I_n}g(x^j_n) \big( h(u^j_t) - h_c(u^j_t) \big) \bigg| = 0.
\end{equation}
Indeed \eqref{eq: Gaussian h c tail}, \eqref{eq: convergence h c } and \eqref{eq: intermediate second moment corollary} suffice for the Corollary, because they imply that 
 \begin{multline}
\lim_{n\to \infty} \bigg| n^{-1} \sum_{j\in I_n}g(x^j_n) h(u^j_t) - \mathbb{E}^{(x,u) \sim \bar{\mu}_t}\big[ g(x) h(u) \big] \bigg| = \\
 \lim_{c\to \infty} \lim_{n\to  \infty} \bigg| n^{-1} \sum_{j\in I_n}g(x^j_n) \big( h(u^j_t) - h_c(u^j_t) \big)\\ - \mathbb{E}^{(x,u) \sim \bar{\mu}_t}\big[ g(x) \big( h(u) - h_c(u) \big)\big] \bigg| = 0.
\end{multline}
It only remains to prove \eqref{eq: intermediate second moment corollary}, and in fact this is a consequence of Lemma \ref{Lemma tails for the second moment convergence} below.
\end{proof}
We next bound the tails of the second moment of the original particle system. 
\begin{lemma} \label{Lemma tails for the second moment convergence}
For any $\epsilon > 0$, there exists $c_{\epsilon} > 0$ such that 
\begin{align}
\lsup{n} n^{-1} \log \mathbb{P}\bigg( n^{-1} \sum_{j\in I_n} \| u^j_t \|^2 \chi\lbrace \| u^j_t \| \geq c_{\epsilon} \rbrace \geq \epsilon \bigg) < 0.
\end{align}
\end{lemma}
\begin{proof}
Write the matrix exponential as $Q_t = \exp\big( -tL \big)$. The solution of \eqref{eq:particleModel restated}, written in its mild form, satisfies the identity
\begin{align}
u^j_t = Q_t u^j_0 +\int_0^t Q_{t-s}  \Bigl(    \frac{1}{n} \sum_{k=1}^{n} \frac{1}{\phi_n}K^{jk} f(u^k_s) + I^j_s \Bigr) ds + \int_0^t  Q_{t-s} G^j_s dW^j_s .
\end{align}
Taking the norm of both sides, squaring, and then using the inequality $(a_1 + a_2 + a_3 + a_4)^2 \leq 4 a_1^2 + 4a_2^2 + 4a_3^2 + 4 a_4^2$, we find that
\begin{multline}
\| u^j_t \|^2 \leq 4 \| Q_t u^j_0 \|^2 + 4\left\| \int_0^t Q_{t-s} I^j_s ds \right\|^2 +  \\ 4 \left\| \int_0^t \frac{1}{n \phi_n} \sum_{k=1}^{n} Q_{t-s}  K^{jk} f(u^k_s) ds \right\|^2 + 4 \left\|  \int_0^t  Q_{t-s} G^j_s dW^j_s \right\|^2.
\end{multline}
Using Jensen's Inequality, and the fact that $|f| \leq f_{max}$, there is a constant such that
\begin{align}
 \left\| \int_0^t \frac{1}{n \phi_n} \sum_{k=1}^{n} Q_{t-s}  K^{jk} f(u^k_s) ds \right\|^2  \leq t \rm{Const}   \int_0^t\bigg( \frac{1}{n \phi_n} \sum_{k=1}^{n}   K^{jk}\bigg)^2  ds  .
\end{align}
Summing over $j$, and employing Hypothesis 3.5, we find that as long as $c_{\epsilon}$ is sufficiently large, it must hold that for all $t\leq T$,
\begin{align}
4 n^{-1}\sum_{j\in I_n}  \left\| \int_0^t \frac{1}{n \phi_n} \sum_{k=1}^{n} Q_{t-s}  K^{jk} f(u^j_s) ds \right\|^2 \leq \frac{1}{4} c_{\epsilon}.
 \end{align}
 Since it is assumed that the initial empirical measure converges, as long as $c_{\epsilon}$ is sufficiently large, 
 \begin{align}
\lsup{n} n^{-1}\sum_{j\in I_n}  4 \| Q_t u^j_0 \|^2 \leq \frac{1}{4} c_{\epsilon}.
 \end{align}
Now the inputs are assumed to be uniformly bounded, i.e.
  \[
  \sup_{j \in I_n} \sup_{s \leq T}  \| I^j_s \| < \infty,
  \]  
and hence as long as $c_{\epsilon}$ is sufficiently large, it must hold that
 \begin{align}
\lsup{n} n^{-1}\sum_{j\in I_n}  4 \left\| \int_0^t Q_{t-s} I^j_s ds \right\|^2  \leq \frac{1}{4} c_{\epsilon}.
 \end{align}
 For the stochastic integral, by Ito's Lemma,
 \begin{multline}
n^{-1}\sum_{j\in I_n}  \left\|  \int_0^t  Q_{t-s} G^j_s dW^j_s \right\|^2 = n^{-1}\sum_{j\in I_n} \int_0^t \rm{tr}\big( Q_{t-s} G^j_s ( G^j_s)^T Q_{t-s}^T \big) ds \\ + 2 n^{-1}\sum_{j\in I_n} \int_0^t (X^j_s)^T Q_{t-s} G^j_s dW^j_s
 \end{multline}
 where we have written
 \[
X^j_s =   \int_0^t  Q_{t-s} G^j_s dW^j_s \in \mathbb{R}^d.
 \]
 Since $\sup_{j\in I_n} \sup_{s \leq T} \| G^j_s \| < \infty$, it holds that there is a constant such that for all $n\geq 1$ and all $t\leq T$,
 \[
 n^{-1}\sum_{j\in I_n} \int_0^t \rm{tr}\big( Q_{t-s} G^j_s ( G^j_s)^T Q_{t-s}^T \big) ds < \rm{Const}.
 \]
 One can then show that for any $\delta > 0$,  
 \begin{align}
\sup_{n\geq 1}n^{-1} \log \mathbb{P}\bigg( 2 n^{-1}\sum_{j\in I_n} \int_0^t (X^j_s)^T Q_{t-s} G^j_s dW^j_s \geq \delta \bigg) < 0.
 \end{align}
We thus find that as long as $c_{\epsilon}$ is large enough
\begin{align}
\lsup{n} n^{-1} \log \mathbb{P} \bigg( n^{-1} \sum_{j\in I_n} 4 \left\|  \int_0^t  Q_{t-s} G^j_s dW^j_s \right\|^2 > \frac{1}{4} c_{\epsilon} \bigg) < 0.
\end{align}
Combining the above results, we can thus conclude that
\begin{align}
\lsup{n} n^{-1} \log \mathbb{P}\bigg( n^{-1} \sum_{j\in I_n} \| u^j_t \|^2 \chi\lbrace u^j_t \| \geq c_{\epsilon} \rbrace \geq \epsilon \bigg) < 0.
\end{align}
\end{proof}

\subsection{Proof that the Connectivity Assumptions are satisfied for a sparse
Erdos-Renyi Random Digraph} \label{Section Connectivity Satisfied for Erdos Renyi
Random Graphs}

We prove that the connectivity assumptions in Hypothesis 3.5 are satisfied in the case that the connections take on values in $\lbrace -1,0,1\rbrace$ and are sampled independently from a probability distribution. More precisely, we take $\phi_n$ to be a positive sequence that decreases to zero, and such that there exists a constant $l > 0$ such that
\begin{equation} \label{eq: summability of phi n}
\sum_{n=1}^{\infty} \exp\big( - l n\phi_n \big) < \infty.
\end{equation}
It is also assumed that there exist continuous functions $p^+_{\alpha\beta}, p^-_{\alpha\beta} : D \times D \to \mathbb{R}^+$ such that
\begin{align}
\mathbb{P}\big( K^{jk}_{\alpha\beta} = 1 \big) &= \phi_n p^+_{\alpha\beta}(x^j_n,x^k_n)\label{eq: probability of positive connection} \\
\mathbb{P}\big( K^{jk}_{\alpha\beta} = -1 \big) &= \phi_n p^-_{\alpha\beta}(x^j_n,x^k_n).\label{eq: probability of negative connection}
\end{align}
Furthermore it is assumed that $K^{jk}_{\alpha\beta}$ is probabilistically independent of $K^{ab}_{\gamma\delta}$ if either $a\neq j$, and /or $b \neq k$. This implies that the connectivity will not be symmetric (in general). We define the averaged connectivity to be such that
\begin{align}
\mathcal{K}_{\alpha\beta}(x,y) = p^+_{\alpha\beta}(x,y) - p^-_{\alpha\beta}(x,y).
\end{align}

We start by proving the following Lemma.
\begin{lemma} \label{Lemma Bound Number of nonzero connections in row}
For any $c \in \lbrace -1,1\rbrace$,
    \begin{equation}\label{eq: absolute summability of connectivity sparseness appendix}
\lsup{n}(n\phi_n)^{-1} \sup_{\alpha\in \mathbb{N}_q}\sup_{j\in \mathbb{N}_n}  \sum_{k\in \mathbb{N}_n}\sum_{\beta\in\mathbb{N}_q} \chi\lbrace K^{jk}_{\alpha\beta} =c \rbrace < \infty.
    \end{equation}
    \end{lemma}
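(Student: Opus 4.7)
The plan is to combine a classical Chernoff bound for sums of independent Bernoullis with a union bound over the $nq$ source--coordinate pairs $(j,\alpha)$, and then conclude almost-sure finiteness of the $\limsup$ via the first Borel--Cantelli lemma, using the summability hypothesis $\sum_{n}\exp(-ln\phi_n)<\infty$ in \cref{eq: summability of phi n}.

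\emph{Step 1 (Mean bound).} Fix $c\in\{-1,+1\}$ and set
\[
  S^{j,\alpha}_n \;=\; \sum_{k\in\NSet_n}\sum_{\beta\in\NSet_q}\chi\{K^{jk}_{\alpha\beta}=c\},
  \qquad (j,\alpha)\in\NSet_n\times\NSet_q.
\]
By the row-independence assumption, together with \cref{eq: probability of positive connection} and \cref{eq: probability of negative connection}, $S^{j,\alpha}_n$ is a sum of $nq$ independent Bernoulli variables with success probabilities $\phi_n p^c_{\alpha\beta}(x^j_n,x^k_n)$. Continuity of $p^{\pm}_{\alpha\beta}$ on the compact $D\times D$ (\cref{hyp:domain}, \cref{hyp:functions}) yields a universal constant $M<\infty$ with $p^{\pm}_{\alpha\beta}\le M$, so $\mathbb{E}[S^{j,\alpha}_n]\le Mq n\phi_n$.

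\emph{Step 2 (Chernoff + union bound).} The multiplicative Chernoff bound $\mathbb{P}(S\ge t)\le(e\mu/t)^t$ (valid for $t\ge\mu$) applied to $S^{j,\alpha}_n$ with $t=Kn\phi_n$ gives, for every $K>eMq$,
\[
  \mathbb{P}\bigl(S^{j,\alpha}_n\ge Kn\phi_n\bigr)\;\le\;\exp\bigl(-\Lambda(K)\,n\phi_n\bigr),
  \qquad
  \Lambda(K)\;:=\;K\log\bigl(K/(eMq)\bigr),
\]
and $\Lambda(K)\to\infty$ as $K\to\infty$. A union bound over the $nq$ pairs $(j,\alpha)$ then yields
\[
  \mathbb{P}\Bigl((n\phi_n)^{-1}\sup_{(j,\alpha)}S^{j,\alpha}_n\ge K\Bigr)\;\le\;nq\,\exp\bigl(-\Lambda(K)\,n\phi_n\bigr).
\]

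\emph{Step 3 (Borel--Cantelli).} Pick $K$ so large that $\Lambda(K)\ge l+\eta$ for some $\eta>0$ large enough to absorb the $nq$ prefactor. Combining $n\phi_n\to\infty$ with the summability hypothesis $\sum_{n}\exp(-ln\phi_n)<\infty$, one obtains $\sum_{n}nq\exp(-\Lambda(K)n\phi_n)<\infty$. The first Borel--Cantelli lemma then gives
\[
  \limsup_{n\to\infty}(n\phi_n)^{-1}\!\!\sup_{(j,\alpha)\in\NSet_n\times\NSet_q}\!\!S^{j,\alpha}_n\;\le\;K\;<\;\infty
  \qquad\text{almost surely,}
\]
which is precisely \cref{eq: absolute summability of connectivity sparseness appendix}.

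\emph{Main obstacle.} The delicate point is step 3: one must verify that $\sum_n nq\exp(-\Lambda(K)n\phi_n)$ is finite for some admissible $K$. This is where the hypothesis \cref{eq: summability of phi n} on $\phi_n$ is essential, rather than merely $n\phi_n\to\infty$; one needs the Chernoff exponent $\Lambda(K)n\phi_n$ to dominate both the logarithmic overhead $\log(nq)$ from the union bound and the threshold $ln\phi_n$ dictated by \cref{eq: summability of phi n}, so that the product $nq\exp(-(\Lambda(K)-l)n\phi_n)$ is eventually bounded (or at least summably large) and the excess factor $\exp(-ln\phi_n)$ carries the summability. Beyond this calibration of constants, the remaining ingredients are routine.
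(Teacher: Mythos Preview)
Your argument---Chernoff bound for each $(j,\alpha)$, union bound over the $nq$ pairs, then Borel--Cantelli via \eqref{eq: summability of phi n}---is precisely the paper's route; the paper uses the exponential-moment form of Chernoff directly and is in fact less explicit than you about the union bound over $(j,\alpha)$.

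Two small remarks. First, the independence hypothesis in that section only decouples distinct index pairs $(j,k)$, not the $q$ coordinates $\beta$ for a fixed pair, so $S^{j,\alpha}_n$ is a sum of $n$ independent $\{0,\dots,q\}$-valued summands rather than $nq$ independent Bernoullis; this affects only constants in the Chernoff estimate. Second, the obstacle you flag in Step~3 is indeed surmountable: the assumed monotonicity of $\phi_n$ together with $\sum_n e^{-ln\phi_n}<\infty$ forces $n\phi_n\ge(2l)^{-1}\log n$ eventually (bound the tail $\sum_{m=n}^{2n}e^{-lm\phi_m}\ge n\,e^{-2ln\phi_n}\to 0$), and once $n\phi_n\gtrsim\log n$ any choice with $\Lambda(K)>3l$ makes $nq\,e^{-(\Lambda(K)-l)n\phi_n}$ bounded, so the remaining factor $e^{-ln\phi_n}$ carries the summability.
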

\begin{proof}
Thanks to Chernoff's Inequality, for any $L > 0$, $c \in \lbrace -1,1 \rbrace$ and any 
$j\in \mathbb{N}_n$, $\alpha \in \mathbb{N}_q$, for any $a > 0$,
\begin{align*}
\mathbb{P}\bigg( \sum_{k\in \mathbb{N}_n}\sum_{\beta\in\mathbb{N}_q} \chi\lbrace K^{jk}_{\alpha\beta} =c \rbrace \geq L n\phi_n  \bigg) \leq
 \mathbb{E}\bigg[ \exp\bigg( a\sum_{k\in \mathbb{N}_n}\sum_{\beta\in\mathbb{N}_q} \chi\lbrace K^{jk}_{\alpha\beta} =c \rbrace -a L n \phi_n \bigg) \bigg].
\end{align*}
Our assumptions \eqref{eq: probability of positive connection}-\eqref{eq: probability of negative connection} dictate that there is a constant $C> 0$ such that for all $j,k\in \mathbb{N}_n$ and $\alpha \in \mathbb{N}_q$, 
\begin{align}
 \mathbb{E}\bigg[ \exp\bigg( a \sum_{\beta\in\mathbb{N}_q} \chi\lbrace K^{jk}_{\alpha\beta} =c \rbrace \bigg) \bigg] \leq 1 + \phi_n C\big( \exp(qa) - 1 \big).
\end{align}
We thus find that
\begin{multline}
\mathbb{E}\bigg[ \exp\bigg( a\sum_{k\in \mathbb{N}_n}\sum_{\beta\in\mathbb{N}_q} \chi\lbrace K^{jk}_{\alpha\beta} =c \rbrace -a L n \phi_n \bigg) \bigg] \\
\leq \exp\bigg( n\phi_n C\big( \exp(qa) - 1 \big) - a L n \phi_n \bigg).
\end{multline}
Thus for large enough $L$,
\begin{align}
\mathbb{P}\bigg( \sum_{k\in \mathbb{N}_n}\sum_{\beta\in\mathbb{N}_q} \chi\lbrace K^{jk}_{\alpha\beta} =c \rbrace \geq L n\phi_n  \bigg) \leq \exp(-ln \phi_n).
\end{align}
Thus, thanks to assumption \eqref{eq: summability of phi n},
\[
\sum_{n=1}^{\infty} \mathbb{P}\bigg( \sum_{k\in \mathbb{N}_n}\sum_{\beta\in\mathbb{N}_q} \chi\lbrace K^{jk}_{\alpha\beta} =c \rbrace \geq L n\phi_n  \bigg) < \infty .
\]
An application of the Borel-Cantelli Lemma then implies that
    \begin{equation}
\lsup{n}(n\phi_n)^{-1} \sup_{\alpha\in \mathbb{N}_q}\sup_{j\in \mathbb{N}_n}  \sum_{k\in \mathbb{N}_n}\sum_{\beta\in\mathbb{N}_q} \chi\lbrace K^{jk}_{\alpha\beta} =c \rbrace \leq L.
    \end{equation}
\end{proof}
We next obtain a bound on the operator norm of the connectivity matrix. For a constant $c > 0$, define the event
\begin{multline}
\mathcal{Q}_n = \bigg\lbrace   \sup_{w \in \mathbb{R}^{nq} : \| w \| = 1} \sum_{j,k \in \mathbb{N}_n} \sum_{\alpha,\beta \in \mathbb{N}_q}\chi\big\lbrace K^{jk}_{\alpha\beta} = 1 \big\rbrace w^j_{\alpha} w^k_{\beta} \leq c n\phi_n \text{ and }\\   \sup_{w \in \mathbb{R}^{nq} : \| w \| = 1} \sum_{j,k \in \mathbb{N}_n} \sum_{\alpha,\beta \in \mathbb{N}_q}\chi\big\lbrace K^{jk}_{\alpha\beta} = -1 \big\rbrace w^j_{\alpha} w^k_{\beta} \leq c n\phi_n  \bigg\rbrace .
 \end{multline}
 We notice that if the event $\mathcal{Q}_n$ holds, then necessarily
 \begin{align} \label{eq: bound operator norm last section}
 \sup_{w \in \mathbb{R}^{nq} : \| w \| = 1} \sum_{j,k \in \mathbb{N}_n} \sum_{\alpha,\beta \in \mathbb{N}_q}  K^{jk}_{\alpha\beta}   w^j_{\alpha} w^k_{\beta} \leq 2 c n \phi_n.
 \end{align}
 \begin{lemma}
There exists a constant $c > 0$ such that, with unit probability there exists a random integer $n_0$ such that for all $n \geq n_0$, the event $\mathcal{Q}_n$ holds.
 \end{lemma}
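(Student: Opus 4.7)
The plan is to reduce the bounds on the two quadratic forms appearing in the definition of $\mathcal{Q}_n$ to a bound on the operator norms of the random matrices $M^{\pm} \in \mathbb{R}^{nq \times nq}$ with entries $(M^{\pm})_{(j,\alpha),(k,\beta)} = \chi\{K^{jk}_{\alpha\beta} = \pm 1\}$. By Cauchy--Schwarz, for any unit vector $w \in \mathbb{R}^{nq}$,
\[
\bigg| \sum_{j,k \in \mathbb{N}_n} \sum_{\alpha,\beta \in \mathbb{N}_q} \chi\{K^{jk}_{\alpha\beta} = \pm 1\}\, w^j_\alpha w^k_\beta \bigg|
= |\langle w, M^{\pm} w \rangle| \leq \|M^{\pm}\|_{\mathrm{op}},
\]
where $\|\cdot\|_{\mathrm{op}}$ is the operator norm induced by the Euclidean norm on $\mathbb{R}^{nq}$. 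It thus suffices to show that $\|M^{\pm}\|_{\mathrm{op}} \leq c\, n\phi_n$ eventually almost surely, for some fixed constant $c$.

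I would then invoke the standard Riesz--Thorin interpolation bound
\[
\|M\|_{\mathrm{op}} \leq \sqrt{\|M\|_{\infty \to \infty}\; \|M\|_{1 \to 1}},
\]
where $\|M\|_{\infty \to \infty}$ and $\|M\|_{1 \to 1}$ denote the maximum row sum and maximum column sum of $M$, respectively. The max row sum bound is exactly the content of Lemma \ref{Lemma Bound Number of nonzero connections in row}, which yields
\[
\|M^{\pm}\|_{\infty \to \infty} = \sup_{\alpha \in \mathbb{N}_q} \sup_{j \in \mathbb{N}_n} \sum_{k \in \mathbb{N}_n}\sum_{\beta\in\mathbb{N}_q} \chi\{K^{jk}_{\alpha\beta} = \pm 1\} \leq L\, n\phi_n \quad \text{eventually a.s.}
\]

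To close the argument I would observe that the analogous max column-sum bound,
\[
\|M^{\pm}\|_{1 \to 1} = \sup_{\beta \in \mathbb{N}_q} \sup_{k \in \mathbb{N}_n} \sum_{j \in \mathbb{N}_n}\sum_{\alpha\in\mathbb{N}_q} \chi\{K^{jk}_{\alpha\beta} = \pm 1\} \leq L\, n\phi_n \quad \text{eventually a.s.},
\]
follows by repeating verbatim the Chernoff--Borel--Cantelli argument used to prove Lemma \ref{Lemma Bound Number of nonzero connections in row}, after swapping the summation indices $j \leftrightarrow k$ and $\alpha \leftrightarrow \beta$. The only ingredients of that proof are (i) the probabilistic independence of $\{K^{jk}_{\alpha\beta}\}$ across distinct (unordered) index pairs $\{j,k\}$, and (ii) the uniform bound $\mathbb{P}(K^{jk}_{\alpha\beta} = \pm 1) \leq (\sup p^{\pm}_{\alpha\beta})\,\phi_n$. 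Both properties are symmetric under the row/column swap, even though $p^{\pm}_{\alpha\beta}(x,y)$ need not equal $p^{\pm}_{\beta\alpha}(y,x)$, so the proof carries over without modification.

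Combining the two bounds via the interpolation inequality gives $\|M^{\pm}\|_{\mathrm{op}} \leq L\, n\phi_n$ eventually almost surely, and $\mathcal{Q}_n$ then holds with $c = L$ for every $n$ beyond a random threshold $n_0(\omega)$. I do not anticipate a substantive obstacle: the heavy lifting is entirely done in Lemma \ref{Lemma Bound Number of nonzero connections in row}, and the only task left is to check that its proof adapts mechanically to the column-sum bound, which is transparent from its structure.
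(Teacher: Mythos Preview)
Your proposal is correct and essentially the same as the paper's: reduce the quadratic form to a bound on the operator norm of the $0$--$1$ indicator matrix, control that via the maximum row (and column) sums, and invoke Lemma~\ref{Lemma Bound Number of nonzero connections in row}. The paper phrases the matrix-norm step as a one-line appeal to the Perron--Frobenius theorem rather than your Riesz--Thorin interpolation $\|M\|_{\mathrm{op}}\le\sqrt{\|M\|_{1\to1}\|M\|_{\infty\to\infty}}$, but the content is identical; if anything, your explicit treatment of both row and column sums is slightly more careful, since the connectivity is not assumed symmetric.
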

 \begin{proof}
Thanks to the Perron-Frobenius Thoerem, for $a \in \lbrace -1,1 \rbrace$,
\begin{align}
\sup_{w \in \mathbb{R}^{nq} : \| w \| = 1} \sum_{j,k \in \mathbb{N}_n} \sum_{\alpha,\beta \in \mathbb{N}_q}\chi\big\lbrace K^{jk}_{\alpha\beta} = a \big\rbrace w^j_{\alpha} w^k_{\beta} \leq \sup_{\alpha\in \mathbb{N}_q}\sup_{j\in \mathbb{N}_n}  \sum_{k\in \mathbb{N}_n}\sum_{\beta\in\mathbb{N}_q} \chi\lbrace K^{jk}_{\alpha\beta} =a \rbrace .
\end{align}
The Lemma is thus a consequence of Lemma \ref{Lemma Bound Number of nonzero connections in row}.
 \end{proof}
The next lemma concerns the limits of the constants $R^n$ of Hypothesis 3.5. 
\begin{lemma}
With unit probability,
\begin{align}
\lim_{n\to\infty}R^n = 0.
\end{align}
\end{lemma}
\begin{proof}
Define
\begin{align*}
&R^n_+ = n^{-1}\sup_{\alpha \in \mathbb{N}_q}\sup_{y\in [-1,1]^{qn} }\sum_{j \in \mathbb{N}_n , \alpha \in \mathbb{N}_q} &&\bigg(  n^{-1}\sum_{k\in \mathbb{N}_n , \beta \in \mathbb{N}_q}\bigg( \phi_n^{-1} \chi\big\lbrace K^{jk}_{\alpha\beta} = 1 \big\rbrace \\ & &&  -   p^+_{\alpha\beta}(x^j_n , x^k_n) \bigg) y^k_{\beta} \bigg)^2 \\
&R^n_- = n^{-1}\sup_{\alpha \in \mathbb{N}_q}\sup_{y\in [-1,1]^{qn} }\sum_{j \in \mathbb{N}_n , \alpha \in \mathbb{N}_q} &&\bigg(  n^{-1}\sum_{k\in \mathbb{N}_n , \beta \in \mathbb{N}_q}\bigg( \phi_n^{-1} \chi\big\lbrace K^{jk}_{\alpha\beta} = -1 \big\rbrace \\ & &&  -   p^-_{\alpha\beta}(x^j_n , x^k_n) \bigg) y^k_{\beta} \bigg)^2
\end{align*}
It suffices that we show that
\begin{align}
\lim_{n\to\infty} R^n_- =& 0 \label{eq: p minus last part in lemma}  \\
\lim_{n\to\infty} R^n_+ =& 0. \label{eq: p plus last part in lemma}  
\end{align}
Since the proofs are almost identical, we only prove \eqref{eq: p plus last part in lemma}. 
For $y\in [-1,1]^{qn}$, define
\[
  h^j_{\alpha}(y) = n^{-1} \bigg| \sum_{k=1}^n \sum_{\beta=1}^q \big( \phi_n^{-1} \chi\lbrace K_{\alpha\beta}^{jk} = 1 \rbrace - p^+_{\alpha\beta}(x_n^j,x_n^k) \big) y^k_{\beta} \bigg|.
\]
We need to show that
\begin{equation} \label{eq: to show square norm h j alpha}
\lim_{n\to\infty}\bigg\lbrace n^{-1} \sup_{y\in [-1,1]^{qn}} \sup_{\alpha \in \mathbb{N}_q}\sum_{j\in I_n} h^j_{\alpha}(y)^2 \bigg\rbrace = 0.
\end{equation}
For a constant $C > 0$, define also the event
\begin{equation}
\mathcal{U}_n = \bigg\lbrace \text{For }c=\pm 1, \; \; \sup_{\alpha\in \mathbb{N}_q}\sup_{j\in \mathbb{N}_n}  \sum_{k\in \mathbb{N}_n}\sum_{\beta\in\mathbb{N}_q} \chi\lbrace K^{jk}_{\alpha\beta} =c \rbrace \leq C n\phi_n \bigg\rbrace .
\end{equation}
The constant $C > 0$ is taken to be large enough that $\mathcal{U}_n$ always holds (for large enough $n$), which is possible thanks to Lemma \ref{Lemma Bound Number of nonzero connections in row}.

For $m \in \mathbb{Z}^+$, we decompose
\begin{equation}
y^j_{\alpha} = y^{(m),j}_{\alpha} + \tilde{y}^{(m),j}_{\alpha}
\end{equation}
where $\tilde{y}^{(m),j}_{\alpha} \in [0, m^{-1})$ and $y^{(m),j}_{\alpha} = am^{-1}$ for some integer $a$. Write
\begin{align}
\mathcal{S}^n_m = \big\lbrace y \in [-1,1]^{qn} \; : \; y^j_{\alpha} = a^j_{\alpha} / m \text{ for some integer }a^j_{\alpha} \big\rbrace.
\end{align}
Notice that
\begin{equation}
\lsup{n} n^{-1} \log \big| \mathcal{S}^n_m \big| < \infty. 
\end{equation}
We write
\begin{align}
 \sum_{j\in \mathbb{N}_n} \big( h^j_{\alpha}(y)\big)^2 =&  \sum_{j\in \mathbb{N}_n}\big\lbrace  h^{j}_{\alpha}(y^{(m)})^2 + h_{\alpha}^j(\tilde{y}^{(m)})^2 + 2h^j_{\alpha}(y^{(m)})h^j_{\alpha}(\tilde{y}^{(m)}) \big\rbrace \nonumber \\
 \leq &  \sum_{j\in \mathbb{N}_n}\big\lbrace  h^{j}_{\alpha}(y^{(m)})^2 + h_{\alpha}^j(\tilde{y}^{(m)})^2\big\rbrace\nonumber \\
 &+ 2\bigg\lbrace \sum_{j\in \mathbb{N}_n} h^{j}_{\alpha}(y^{(m)})^2 \bigg\rbrace^{1/2} \bigg\lbrace  \sum_{j\in \mathbb{N}_n}  \tilde{h}^{j}_{\alpha}(y^{(m)})^2 \bigg\rbrace^{1/2},\label{eq: cauchy-schwarz inequality at end}
\end{align}
thanks to the Cauchy-Schwarz Inequality. Furthermore one verifies that (as long as the event $\mathcal{U}_n$ holds), for all $j\in \mathbb{N}_n$ and all $\alpha\in\mathbb{N}_q$,
\begin{align}
 \big| h_{\alpha}^j(\tilde{y}^{(m)}) \big| \leq C m^{-1} +qm^{-1} \sup_{x,y\in \mathcal{D}, \beta \in \mathbb{N}_q}  p^{+}_{\alpha\beta}(x,y) 
 := \tilde{C} m^{-1},
 \end{align}
 and the RHS evidently goes to $0$ uniformly as $m\to\infty$.  In light of \eqref{eq: cauchy-schwarz inequality at end}, in order that \eqref{eq: to show square norm h j alpha} holds it thus suffices that we show that
 \begin{align}
  \lim_{m\to\infty}   \lim_{n\to\infty} n^{-1} \sup_{\alpha \in \mathbb{N}_q} \sup_{y\in \mathcal{S}^n_m}\sum_{j\in \mathbb{N}_n}h^j_{\alpha}(y)^2 = 0. \label{eq: to show lim m lapha n }
 \end{align}
 Now for any $\delta > 0$, using a union-of-events bound,
 \begin{align}
\mathbb{P}\big( \sup_{y\in \mathcal{S}^n_m}  \sum_{j\in \mathbb{N}_n}h^j_{\alpha}(y)^2 \geq n\delta , \mathcal{U}_n \big) \leq & \big| \mathcal{S}^n_m \big| \sup_{y\in \mathcal{S}^n_m} \mathbb{P}\bigg( \mathcal{U}_n,  \sum_{j\in \mathbb{N}_n}h^j_{\alpha}(y)^2 \geq n\delta \bigg) \nonumber \\
=& (m+1)^n \sup_{y\in \mathcal{S}^n_m} \mathbb{P}\bigg( \mathcal{U}_n,    \sum_{j\in \mathbb{N}_n}h^j_{\alpha}(y)^2 \geq n\delta \bigg) . \label{eq: temporary intermediate bound h squared}
 \end{align}
 Thanks to the Borel-Cantelli Lemma, in order that \eqref{eq: to show lim m lapha n } holds it suffices that we show that for arbitrary $\delta > 0$,
 \begin{align}
    \lsup{n} n^{-1} \log  \sup_{y\in \mathcal{S}^n_m} \mathbb{P}\big( \mathcal{U}_n,   \sum_{j\in \mathbb{N}_n}h^j_{\alpha}(y)^2 \geq n\delta \big) = -\infty, \label{eq: to show intermediate y j alpha squared} 
 \end{align}
 since \eqref{eq: temporary intermediate bound h squared} and \eqref{eq: to show intermediate y j alpha squared} imply that
 \begin{align}
\sum_{n=1}^{\infty} \mathbb{P}\bigg( \sup_{y\in \mathcal{S}^n_m} \sum_{j\in \mathbb{N}_n}h^j_{\alpha}(y)^2 \geq n\delta , \mathcal{U}_n \bigg) < \infty.
 \end{align}
To this end, define
\begin{align}
g^j_{\epsilon}(y) = \chi\big\lbrace \sup_{\alpha\in \mathbb{N}_q} h^j_{\alpha}(y) \geq \epsilon \big\rbrace .
\end{align}
Notice that if the event $\mathcal{U}_n$ holds, then
\[
\big| h^j_{\alpha}(y) \big| \leq C + \sup_{\beta\in \mathbb{N}_q} \sup_{x,z \in D} p^+_{\alpha\beta}(x,z)  := \bar{C}.
\]
This means that (as long as the event $\mathcal{U}_n$ holds), then for all $y\in [-1,1]^{nq}$,
\begin{equation}
 n^{-1}\sum_{j\in \mathbb{N}_n}h^j_{\alpha}(y)^2 \leq \bar{C}^2  n^{-1}\sum_{j\in \mathbb{N}_n} g^j_{\epsilon}(y) + \epsilon^2 .
\end{equation}
In order that \eqref{eq: to show intermediate y j alpha squared} holds, it thus suffices that we show that for arbitrary $\epsilon > 0$,
\begin{align}
\lim_{n\to\infty} n^{-1}\log \sup_{y\in \mathcal{S}^n_m} \mathbb{P}\big( n^{-1} \sum_{j\in \mathbb{N}_n} g^j_{\epsilon}(y) \geq \epsilon \big) = -\infty.    
\end{align}
To this end, by Chernoff's Inequality, for a constant $a > 0$,
\begin{align}
\mathbb{P}\big( n^{-1} \sum_{j\in \mathbb{N}_n} g^j_{\epsilon}(y) \geq \epsilon \big) \leq & \mathbb{E}\bigg[ \exp\bigg( a\sum_{j\in \mathbb{N}_n} g^j_{\epsilon}(y) - a \epsilon n \bigg) \bigg] \nonumber \\
=& \exp\big(-a \epsilon n \big) \prod_{j\in \mathbb{N}_n}\bigg(1 + \mathbb{P}\big( g^j_{\epsilon}(y) = 1 \big) \big( \exp(a) - 1 \big) \bigg) \nonumber\\
\leq & \exp\bigg( n p_n \big( \exp(a) -1 \big) - a \epsilon n \bigg) 
\end{align}
where 
\[
p_n = \sup_{y\in \mathcal{S}^n_m} \sup_{j\in \mathbb{N}_n} \mathbb{P}\big( g_{\epsilon}^j(y) =1 \big).
\]
We define $a = - \log p_n$, and it remains for us to prove that
\begin{align}
\lim_{n\to\infty} p_n = 0. \label{eq: p n limit}
\end{align}
In fact \eqref{eq: p n limit} follows almost immediately from the Hoeffding Inequality \cite{Massart2019}.
\end{proof}

%
%

\subsection{Example of noise-induced Turing-like bifurcation}
\label{ssec:turingExampleOld} 
\begin{figure}
  \centering
  \includegraphics{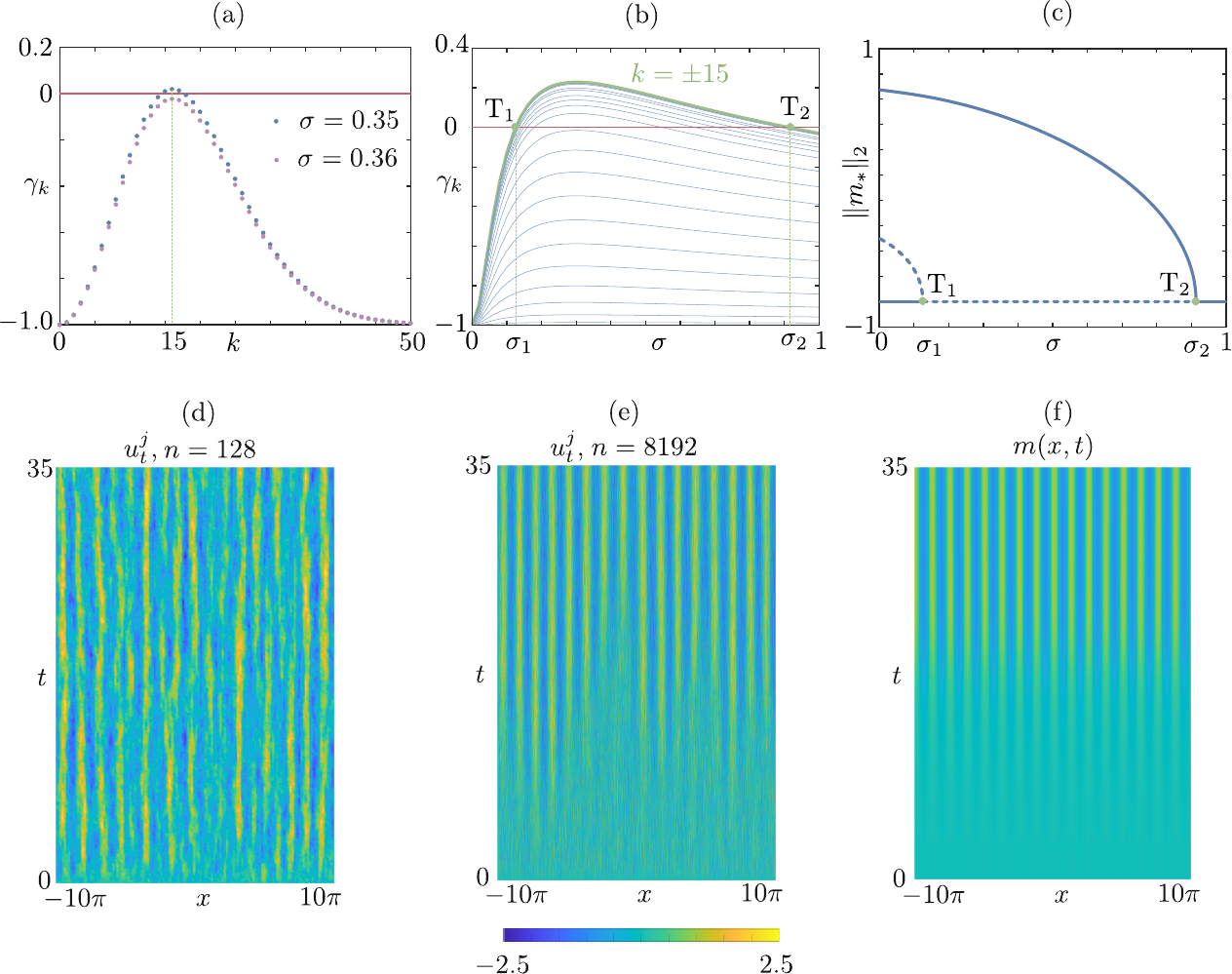}
  \caption{Noise-induced Turing-like bifurcation for the particle and mean-field
    model with one population ($q=1$), posed on a ring of width $2l$, with synaptic
    kernel \cref{eq:ADefOld}, neuronal firing rate \cref{eq:particleFiringRateOld}, and
    mean-field firing rate \cref{eq:firingRateMeanFieldOld}. (a): values $\{ \gamma_k
    \}_k$ (defined as in the main text of the manuscript),
    show that a Turing-like bifurcation of the homogeneous steady state is located
    between $\sigma = 0.35$ and $\sigma = 0.36$, with critical wavenumber $k_c =
  15$. (b) Curves $\gamma_k(\sigma)$ for $k = \{0, \pm 1, \ldots, \pm 20 \}$ show
  that the branch of homogeneous steady states is stable when there is no noise
  ($\sigma = 0$) and undergoes two noise-induced Turing-like bifurcations at $\sigma =
  \sigma_{1,2}$, both for $k_c = 15$. (c): numerical bifurcation analysis for spatially-extended
  equilibria of the mean-field shows that the first bifurcation is subcritical,
  while the second one is supercritical (dashed lines represent unstable branches).
Numerical simulations of the particle system with kernel matrix for $n = 128$
(e), $n = 8192$ (f), and the mean-field for $\sigma = 0.58 \in
(\sigma_1,\sigma_2)$ confirm that the bifurcation structure in (c). Parameters:
$L =1$, $l =10 \pi$, $I(t,x) \equiv 0$, $G(x,t) \equiv \sigma$, $B = 1.5$, $C = 7$,
$\alpha = 10$, $\theta = 0.4$, $m_0(x) = 0.3\cos(k_c \pi x/l)$. 
}
  \label{fig:turing}
\end{figure}

\rev{
We give a further example of Turing-like bifurcation, in a model with a kernel that
does not support localised solutions, as shown in the main text. We consider a network with $q=1$ on
a ring of width $2l$, $D = \RSet/2l\ZSet$ with distance dependent kernel $\calK(x,y)
= A(x-y)$, where 
\begin{equation}\label{eq:ADefOld}
  A(x) = \frac{C}{\sqrt{\pi}} e^{-x^2} - \frac{C}{B \sqrt{\pi}} e^{-(x/B)^2}, \qquad
  B \in \RSet_{>1}, \qquad C \in \RSet_{>0},
\end{equation}
linear coupling $L =1$, forcing $I(t,x) \equiv 0$, $G(t,x) \equiv \sigma$, and with
firing rate function 
\begin{equation}\label{eq:particleFiringRateOld}
  f(u) = \Phi(\alpha(u-\theta)), \qquad \Phi(u) = \frac{1}{2}\biggl[1 + \erf\biggl(\frac{u}{\sqrt{2}}\biggr)\biggr],
  \qquad 
  \alpha \in \RSet_{>0},
  \quad
  \theta \in \RSet,
\end{equation}  
which results in a mean-field firing rate of the form
\cite{touboulNoiseInducedBehaviorsNeural2012a}
\begin{equation}\label{eq:firingRateMeanFieldOld}
  F(m,v) = \Phi\biggl(\alpha \frac{m-\theta}{\sqrt{1+\alpha^2 v}}\biggr).
\end{equation}

The synaptic kernel $A$ is locally excitatory and laterally inhibitory, and has been
selected here as it is \textit{balanced}, in the sense that its integral over $\RSet$
is null. If $D = \RSet$, it can be shown that $m_*=0$ is a homogeneous equilibrium for any value of $\sigma$. We
consider the problem on a ring of width $2l$, hence
\[
  \int_{-l}^{l} A(x) \,d x = C ( \erf{l} - \erf{(l/B)} ),
\]
which, for large $l$ is approximately null. We have computed a branch of
$\sigma$-dependent homogeneous steady states using Newton's scheme, and the computed
values of $m_*$ do not appreciably differ from $0$ in the selected parameter range. 

In \cref{fig:turing}(a) we show $\{ \gamma_k \}_{k \in \NSet_{50}}$ for two
values of $\sigma$, which provides evidence of a bifurcation at $\sigma_c \in
(0.35,0.36)$ with wavenumber $k_c = 15$. In \cref{fig:turing}(b) we computed the
curves $\gamma_k(\sigma)$ for $k = \{ 0,\pm 1, \ldots, \pm 20 \}$, from which we
deduce that the homogeneous steady state is stable in the noiseless case $\sigma = 0$,
and it undergoes two Turing-like bifurcations induced by noise. 

We employed numerical bifurcation analysis tools to study the
bifurcation structure of steady states to of the mean-field equation (see
\cref{fig:turing}(c)). We see that the primary bifurcation $T_1$ is subcritical, and
a branch of stable spatially-periodic steady states emerge. In contrast, the
secondary bifurcation, $T_2$, is supercritical and gives rise to stable
spatially-periodic equilibria. 

We carried out time simulations of the particle system with varying numbers of
neurons, and of the mean-field equation to confirm the predictions of the numerical
bifurcation analysis in \cref{fig:turing}(d)--(f). 
}

\bibliographystyle{siamplain}
\bibliography{neuralfieldsbib}
\end{document}